\allowdisplaybreaks \numberwithin{equation}{section}
\newtheorem{theorem}{Theorem}[section]
\newtheorem{proposition}[theorem]{Proposition}
\newtheorem{corollary}[theorem]{Corollary}
\newtheorem{lemma}[theorem]{Lemma}
\newtheorem{conjecture}[theorem]{Conjecture}
\newtheorem{claim}[theorem]{Claim}
\newtheorem*{bdelu}{Theorem}
\theoremstyle{definition}
\newtheorem{definition}[theorem]{Definition}
\newtheorem{question}[theorem]{Question}
\theoremstyle{remark}
\newtheorem{remark}[theorem]{Remark}
\DeclareMathOperator{\ord}{ord}
\DeclareMathOperator{\conngon}{conn.gon}
\DeclareMathOperator{\covgon}{cov.gon}
\DeclareMathOperator{\gon}{gon}
\DeclareMathOperator{\codim}{codim}
\DeclareMathOperator{\Sym}{Sym}
\DeclareMathOperator{\Pic}{Pic}
\def\bP{{\mathbb P}}
\def\bG{{\mathbb G}}
\def\cA{{\mathcal A}}
\def\cB{{\mathcal B}}
\def\fM{{\mathcal M}}
\def\fN{{\mathcal N}}
\def\cE{{\mathcal E}}
\def\cL{{\mathcal L}}
\def\cI{{\mathcal I}}
\def\cO{{\mathcal O}}
\def\cP{{\mathcal P}}
\def\cX{{\mathcal X}}
\def\cY{{\mathcal Y}}
\def\ddt{\widetilde{\Delta}}
\begin{document}

\title[Gonality of curves on general hypersurfaces]{Gonality of curves on general hypersurfaces}

\author{Francesco Bastianelli}
\address{Francesco Bastianelli, Dipartimento  di Matematica, Universit\`{a} degli Studi di Bari ``Aldo Moro", Via Edoardo Orabona 4, 70125 Bari -- Italy}
\email{francesco.bastianelli@uniba.it}
\thanks{This work was partially supported by MIUR FIRB 2012 \emph{``Spazi di moduli e applicazioni''}; MIUR PRIN 2010--2011  \emph{``Geometria delle variet\`a algebriche''}; INdAM (GNSAGA); and by the research project \emph{``Families of curves: their moduli and their related varieties"} (CUP: E81-18000100005) - Mission Sustainability - University of Rome Tor Vergata.\\
C. Ciliberto and F. Flamini acknowledge the MIUR Excellence Department Project awarded to the Department of Mathematics, University of Rome Tor Vergata (CUP: E83C18000100006).}

\author{Ciro Ciliberto}
\address{Ciro Ciliberto, Dipartimento  di Matematica, Universit\`{a} degli Studi di Roma ``Tor Vergata", Viale della Ricerca Scientifica 1, 00133 Roma -- Italy}
\email{cilibert@mat.uniroma2.it}

\author{Flaminio Flamini}
\address{Flaminio Flamini, Dipartimento  di Matematica, Universit\`{a} degli Studi di Roma ``Tor Vergata", Viale della Ricerca Scientifica 1, 00133 Roma -- Italy}
\email{flamini@mat.uniroma2.it}

\author{Paola Supino}
\address{Paola Supino, Dipartimento  di Matematica e Fisica, Universit\`{a} degli Studi ``Roma Tre", Largo S. L. Murialdo 1, 00146 Roma -- Italy}
\email{supino@mat.uniroma3.it}

\begin{abstract}
This paper concerns the existence of curves with low gonality on smooth hypersurfaces of sufficiently large degree.
It has been recently proved that if $X\subset \mathbb{P}^{n+1}$ is a hypersurface of degree $d\geqslant n+2$, and if  $C\subset X$ is an irreducible curve passing through a general point of $X$, then its gonality verifies $\gon(C)\geqslant d-n$, and equality is attained on some special hypersurfaces.
We prove that if $X\subset \mathbb{P}^{n+1}$ is a very general hypersurface of degree $d\geqslant 2n+2$, the least gonality of an irreducible curve $C\subset X$ passing through a general point of $X$ is $\gon(C)=d-\left\lfloor\frac{\sqrt{16n+1}-1}{2}\right\rfloor$, apart from a series of possible exceptions, where $\gon(C)$ may drop by one.
\end{abstract}

\maketitle

\section{Introduction}

In this paper, we consider smooth hypersurfaces $X\subset \mathbb{P}^{n+1}$ of  sufficiently large degree, and we are interested in the existence of irreducible curves $C\subset X$ having low gonality and passing through a general point of $X$. 
We recall that the \emph{gonality} $\gon(C)$ of an irreducible projective curve $C$ is the least degree of a non--constant morphism $\widetilde{C}\longrightarrow \mathbb{P}^1$, where $\widetilde{C}$ is the normalization of $C$.

The study of curves on varieties is at the basis of the birational classification.
As for rational curves on projective hypersurfaces or complete intersections, their existence has been investigated long since (see e.g. \cite{Morin,Pre,Segre}), and it has been understood in a series of seminal works (see \cite{Cle,E,E2,V,V2}).
In particular, it turns out that no rational curve lies on a very general hypersurface $X\subset \mathbb{P}^{n+1}$ of degree $d\geqslant 2n$.
When instead $X$ has degree $d\leqslant 2n-1$, it contains lines varying in a $(2n-d-1)$--dimensional family (see e.g. \cite{Bo,DeMa}), and if $n\geqslant 5$,  lines are the only rational curves on a very general hypersurface of degree $d=2n-1$ (cf. \cite{P}).   

\smallskip
In order to deal with (possibly moving) curves having higher gonality, it is profitable to consider the following birational invariant introduced in \cite{BDELU}, and coinciding with the gonality in dimension 1.
Given an irreducible variety $Y$, we define the \emph{covering gonality} of $Y$ to be the integer
\begin{displaymath}
\covgon(Y):=\min\left\{c\in \mathbb{N}\left|
\begin{array}{l}
\text{Given a general point }y\in Y,\,\exists\text{ an irreducible}\\ \text{curve } C\subseteq Y  \text{ such that }y\in C \text{ and }\gon(C)=c
\end{array}\right.\right\}.
\end{displaymath}
Since $\covgon(Y)=1$ is equivalent to $Y$ being uniruled, we can think of the covering gonality as a measure of the failure of $Y$ to be uniruled.

{The following theorem is probably the most general result governing the gonality of moving curves in a very general hypersurface of large degree.
\begin{bdelu}[{\cite[Proposition 3.8]{BDELU}}]
Let $X\subset \mathbb{P}^{n+1}$ be a very general hypersurface of degree $d\geqslant 2n$.
If $Y\subset X$ is an irreducible subvariety of dimension $s\geqslant 1$, one has
\begin{equation}\label{eq:proposition3.8}
\covgon(Y)\geqslant d-2n+s.
\end{equation}
In particular, 
\begin{equation}\label{eq:theoremA}
\covgon(X)\geqslant d-n.
\end{equation}
\end{bdelu}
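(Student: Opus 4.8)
The plan is to bound the covering gonality from below by transferring the positivity of the canonical bundle of $X$ to $Y$, and then running a Mumford-style null-trace argument against a putative covering family of gonality pencils. I would first record the numerics. Since $X\subset\mathbb{P}^{n+1}$ has degree $d$, adjunction gives $\omega_X\cong\mathcal{O}_X(d-n-2)$, and the target inequality $\covgon(Y)\geqslant d-2n+s$ can be read as $\covgon(Y)\geqslant p+2$ with $p=(d-n-2)-(n-s)$. The loss $n-s$ is exactly the codimension of $Y$ in $X$, which signals that the argument should control how much of the very ampleness of $\mathcal{O}_X(d-n-2)$ survives after restricting holomorphic $s$-forms to the $s$-dimensional $Y$.

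Step 1 (a covering family of pencils). Assume $\covgon(Y)=c$ and let $\pi\colon\mathcal{C}\to T$ be a family of irreducible curves sweeping out $Y$, each endowed with a $g^1_c$. Passing to normalizations and shrinking $T$, I obtain a diagram in which, through a general point $y\in Y$, there pass a member $C$ and a fibre $D_p=\phi^{-1}(p)=y+y_2+\cdots+y_c$ of its degree-$c$ map $\phi\colon\widetilde{C}\to\mathbb{P}^1$; as $p$ varies, this effective $0$-cycle of length $c$ moves in a pencil. This yields a correspondence on $Y$ whose salient feature is that the companion cycle $y_2+\cdots+y_c$ of a general point moves.

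Step 2 (productivity of forms). Here I would resolve the singularities of $Y$, obtaining $\widetilde{Y}$, and work with holomorphic $s$-forms on $\widetilde{Y}$ produced by restriction (or residue) from forms on $X$. The engine is the classical fact that a $0$-cycle moving in a pencil forces the vanishing of the trace of any holomorphic top-form along the pencil; by a Bloch--Mumford type argument this forces a nonzero restricted form to vanish on the whole length-$c$ cycle $D_p$. Consequently, the existence of a covering family of $g^1_c$'s is incompatible with the restricted forms separating $0$-cycles of length $c$: concretely, if the relevant sheaf of forms separates cycles of length $p+1$ and $c\leqslant p+1$, the separating form would have to vanish on $D_p$, a contradiction, whence $c\geqslant p+2$.

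Step 3 (positivity from very generality, the crux). The decisive input is that, for a \emph{very general} $X$ of degree $d\geqslant 2n$, the sheaf of holomorphic $s$-forms that $Y$ inherits from $X$ separates $0$-cycles of length $d-2n+s-1$; equivalently $\omega_X=\mathcal{O}_X(d-n-2)$ is $(d-n-2)$-very ample, and restricting to $Y$ costs precisely $n-s$ in the order of very ampleness. I expect this to be the main obstacle, for two reasons: $Y$ is an arbitrary, possibly singular and non-normal subvariety, so the forms must be constructed on a resolution and their separation property verified there; and the genericity of $X$ must be used in an essential way -- via Ein--Voisin type results on the global generation of suitable twists of $\Omega^s_X$ -- to rule out that some special $Y$ carries unexpectedly many vanishing forms. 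Combining Steps 2 and 3 gives $c\geqslant d-2n+s$; finally, taking $s=n$, i.e. $Y=X$, yields $\covgon(X)\geqslant d-n$, which is the stated inequality.
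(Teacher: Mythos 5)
The statement you are proving is not proved in this paper at all: it is quoted verbatim from \cite[Proposition 3.8]{BDELU}, so the comparison must be with the proof given there. Your Steps 1 and 2 do reproduce the actual mechanism of that proof: pass to a desingularization, take a covering family of $c$--gonal curves, and run the Mumford null--trace / Cayley--Bacharach argument to conclude that $c\geqslant p+2$ whenever $\omega_{\widetilde{Y}}$ can be written as $A\otimes B$ with $B$ effective and $A$ separating general $0$--dimensional subschemes of length $p+1$ (property $(\mathrm{BVA})_p$ in the terminology of \cite{BDELU}). Your ``in particular'' case $s=n$, $Y=X$, is also fine as stated, since $\omega_X=\mathcal{O}_X(d-n-2)$ genuinely is $(d-n-2)$--very ample and no genericity is needed there.

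The genuine gap is Step 3, and it is not just an unproved assertion: the mechanism you propose for it cannot work. You want to produce the separating forms on $Y$ by ``restriction (or residue) from forms on $X$'', claiming that the needed positivity is ``equivalent'' to $\omega_X$ being $(d-n-2)$--very ample minus a cost of $n-s$. But for a smooth hypersurface $X\subset\mathbb{P}^{n+1}$ of dimension $n$ one has $H^0(X,\Omega^s_X)=0$ for all $0<s<n$ (Lefschetz hyperplane theorem plus Hodge theory), so there are \emph{no} nonzero holomorphic $s$--forms on $X$ to restrict to $Y$, and no residue construction applies to an arbitrary, possibly highly singular, subvariety of codimension $n-s$. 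The input that actually closes the argument is the Ein--Voisin theorem: for $X$ \emph{very general} of degree $d$, every irreducible $s$--dimensional $Y\subseteq X$ admits a desingularization $\mu\colon\widetilde{Y}\to Y$ such that $\omega_{\widetilde{Y}}\otimes\mu^*\mathcal{O}_Y(-(d+s-2n-2))$ is effective (indeed generically globally generated); then $A:=\mu^*\mathcal{O}_Y(d+s-2n-2)$ satisfies $(\mathrm{BVA})_{d+s-2n-2}$ because $\mathcal{O}_{\mathbb{P}^{n+1}}(k)$ is $k$--very ample and $\mu$ is birational onto a subvariety of $\mathbb{P}^{n+1}$, and your Step 2 yields $\covgon(Y)\geqslant(d+s-2n-2)+2=d-2n+s$. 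Crucially, this positivity is not a property of the fixed hypersurface $X$ at all: it is proved by letting $Y$ deform together with $X$ over the universal family of hypersurfaces and exploiting global generation of twisted tautological bundles there --- precisely the technique of Ein, Voisin and Pacienza that Section \ref{section:UniversalFamilies} of this paper adapts to $\ddt_{r,F}$. This is where very generality enters, and it is the step your proposal leaves both unproved and mis-identified.
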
}

Actually \eqref{eq:theoremA}  holds  even if  $X$ has at worst canonical singularities and $d\geqslant n+2$; under these assumptions, the bound \eqref{eq:theoremA} is  sharp (cf. \cite[Corollary 1.11 and Example 1.7]{BDELU}).

When $n=1$ and $X\subset \mathbb{P}^{2}$ is a smooth plane curve of degree  $d \geqslant 3$, a famous result by M. Noether yields $\gon(X)=d-1$, and all the morphisms $X\longrightarrow \mathbb{P}^1$ of degree $d-1$ are projections from a  point of $X$ (see e.g. \cite{C2}).  
 The case of smooth surfaces $X\subset\mathbb{P}^3$ of degree $d\geqslant 5$ has been studied in \cite{LP}, where one shows that $\covgon(X)=d-2$, and all families of curves computing the covering gonality are classified.
In particular, if $X\subset \mathbb{P}^3$ is a very general surface of degree $d\geqslant 5$ and $C\subset X$ is a $(d-2)$-gonal curve passing through a general point $x\in X$, then $C$ is a plane curve cut out on $X$ by some tangent plane $T_pX$, so that $C$ has a double point at $p\in X$ and the map $C\dashrightarrow \mathbb{P}^1$ of degree $d-2$ is the projection from $p$ (cf. \cite[Corollary 1.8]{LP}).

\smallskip
The main result in this paper is the following Theorem \ref {theorem:Main}, which determines the covering gonality of a very general hypersurface $X\subset \mathbb{P}^{n+1}$ of  sufficiently large degree and arbitrary dimension, apart from a series of exceptions for which, as we will see, the covering gonality is \emph{almost} determined (see Remark \ref {rem:almost} below).

\begin{theorem}\label{theorem:Main}
Let $X\subset \mathbb{P}^{n+1}$ be a very general hypersurface of degree $d\geqslant 2n+2$.
Then 
\begin{equation}
\label{eq:CovGon}
d-\left\lfloor\frac{\sqrt{16n+9}-1}{2}\right\rfloor\leqslant \covgon(X)\leqslant d-\left\lfloor\frac{\sqrt{16n+1}-1}{2}\right\rfloor.
\end{equation}
If  ${n\in\mathbb{N}\smallsetminus\left\{\left.4\alpha^2+3\alpha,4\alpha^2+5\alpha+1\right|\alpha\in \mathbb{N}^*\right\}}$, then
\begin{equation}\label{eq:CovGonEq}
\covgon(X)= d-\left\lfloor\frac{\sqrt{16n+1}-1}{2}\right\rfloor,
\end{equation}
and for general $x\in X$, there exists an irreducible plane curve $C\subset X$ passing through $x$, which computes the covering gonality via the projection $C\dashrightarrow \mathbb{P}^1$ from a singular point $p\in C$ of multiplicity $\left\lfloor\frac{\sqrt{16n+1}-1}{2}\right\rfloor$.
\end{theorem}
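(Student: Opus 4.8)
The plan is to reduce the theorem to two matching estimates and a genericity analysis. Set $m:=\lfloor\frac{\sqrt{16n+1}-1}{2}\rfloor$ and $m':=\lfloor\frac{\sqrt{16n+9}-1}{2}\rfloor$, so that $m$ is the largest integer with $\binom{m+1}{2}\leqslant 2n$ and $m'$ the largest with $\binom{m'+1}{2}\leqslant 2n+1$. A direct computation shows $m'\in\{m,m+1\}$, with $m'=m+1$ precisely when $2n+1=\binom{k+1}{2}$ for some integer $k$, i.e. when $n$ lies in the exceptional set $\{4\alpha^2+3\alpha,\,4\alpha^2+5\alpha+1\}$. Thus it suffices to prove the upper bound $\covgon(X)\leqslant d-m$ and the lower bound $\covgon(X)\geqslant d-m'$; outside the exceptional set these coincide and force $\covgon(X)=d-m$, while the identification of the extremal curves will come out of the lower-bound analysis together with the construction used for the upper bound.

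For the upper bound I would exhibit a covering family of singular plane sections. Parametrise pairs $(p,\Pi)$ with $p\in\Pi\cong\mathbb{P}^2\subset\mathbb{P}^{n+1}$; since $\dim\mathbb{G}(2,n+1)=3(n-1)$, this family $\mathcal{F}$ has dimension $3n-1$. Imposing on the degree-$d$ plane curve $C_\Pi:=X\cap\Pi$ a point of multiplicity $\geqslant m$ at $p$ is $\binom{m+1}{2}$ linear conditions on the restricted equation, so the locus $\Sigma\subseteq\mathcal{F}$ where this holds has expected dimension $3n-1-\binom{m+1}{2}\geqslant n-1$, using $\binom{m+1}{2}\leqslant 2n$. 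Working in the incidence correspondence over the parameter space of all hypersurfaces and using semicontinuity, I would check that for very general $X$ the locus $\Sigma$ is nonempty of this dimension and that the associated curves $C_\Pi$ sweep out a family of dimension $\geqslant n$, hence cover $X$. For a general such $(p,\Pi)$ the curve $C_\Pi$ is irreducible with an $m$-fold point at $p$, and projection from $p$ realises a degree-$(d-m)$ map $\widetilde{C_\Pi}\to\mathbb{P}^1$; therefore $\gon(C_\Pi)\leqslant d-m$ and $\covgon(X)\leqslant d-m$. The points requiring care here are the nonemptiness and expected dimension of $\Sigma$ (independence of the multiplicity conditions for very general $X$) and the irreducibility of the general member.

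The lower bound is the crux, and I would obtain it by refining the positivity argument behind the quoted Theorem (\cite[Proposition 3.8]{BDELU}). Suppose a family $\mathcal{C}\to B$ of irreducible $c$-gonal curves covers $X$, equipped with a relative pencil $g^1_c$; for general $b\in B$ the fibres of the pencil are degree-$c$ divisors $D$ on the normalisation $\widetilde{C}_b$, and through a general $x\in X$ one finds such a $D$ supported at $x$ and moving in a pencil. Pulling back the global twisted holomorphic $(n-1)$-forms on $X$ that drive the proof of \eqref{eq:proposition3.8} and using that they must vanish along the moving fibres, BDELU's count yields $c\geqslant d-n$. The refinement is to track the image $0$-cycles in $\mathbb{P}^{n+1}$: the vanishing of the forms forces the supports of the moving fibres to be collinear and the underlying curve to be planar, so that the number of conditions they fail to impose on $|\mathcal{O}_X(1)|$, and hence the quantity $d-c$, is controlled by the sharp bound $\binom{d-c+1}{2}\leqslant 2n+1$. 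This gives $d-c\leqslant m'$, i.e. $c\geqslant d-m'$, as desired.

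I expect the main obstacle to be precisely this sharpening: passing from BDELU's crude estimate $c\geqslant d-n$ to the sharp constant $2n+1$ requires controlling the positions of the fibres of the gonality pencil very tightly, simultaneously forcing near-extremal covering curves to be plane curves and using the very-generality of $X$ to exclude excess families of singular plane sections; it is exactly the borderline case $2n+1=\binom{k+1}{2}$ that the argument cannot close, producing the exceptional values of $n$. Granting the lower bound, the theorem follows: for $n$ outside the exceptional set $m'=m$, so $d-m\leqslant\covgon(X)\leqslant d-m$ gives $\covgon(X)=d-m$; moreover the lower-bound analysis characterises the extremal covering curves as plane curves carrying an $m$-fold point whose projection computes the gonality, and the construction of the upper bound shows that such a curve passes through a general point of $X$, which is the final assertion.
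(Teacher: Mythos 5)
Your overall architecture is sound: the reduction to an upper bound $\covgon(X)\leqslant d-m$ and a lower bound $\covgon(X)\geqslant d-m'$, and the identification of the exceptional $n$ as exactly those with $2n+1=\binom{k+1}{2}$ for some integer $k$, all match the paper (cf. Lemma \ref{lemma:Equality}). Your upper-bound construction is also in the same spirit as the paper's Theorem \ref{theorem:UpperBound}: a plane $\Pi$ through $p$ on which $X\cap\Pi$ has an $m$-fold point at $p$ is precisely a plane inside the tangency cone $V_p^m$. But be aware that what you defer as ``points requiring care'' is most of the actual content there: an expected-dimension count never yields nonemptiness of $\Sigma$ for a \emph{very general} $X$, because the projection from the incidence variety over all hypersurfaces to $S^*_d$ could fail to dominate (the image could be a proper subvariety with excess fibres). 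The paper rules this out with Proposition \ref{proposition:Predonzan+} (a sharpened Predonzan theorem on lines in complete intersections of type $(2,\dots,m-1)$), the explicitly constructed hypersurface of Lemma \ref{lemma:Example}, and Lemma \ref{lemma:Z}; and in the borderline case $m(m+1)=4n$ it needs a further contradiction argument, using covering-gonality inequalities, to show the planes sweep out all of $\mathbb{P}^{n+1}$. Irreducibility of the general $C_\Pi$ is then obtained a posteriori from the lower bound, not from the construction itself.

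The genuine gap is your lower bound. You propose to refine the BDELU positivity count so that ``the vanishing of the forms forces the supports of the moving fibres to be collinear and the underlying curve to be planar,'' and then to extract $\binom{d-c+1}{2}\leqslant 2n+1$ by counting conditions on $|\mathcal{O}_X(1)|$. Collinearity of the fibres is indeed available (Proposition \ref{proposition:BCD}, quoted from \cite{BCD}), but planarity of the covering curves does \emph{not} follow from it and is proved nowhere in the paper; it is in fact raised there as an open problem (Question \ref{qu0}), and the discussion in \S \ref{ssec:plane} shows it is \emph{false} for $n\leqslant 7$, where non-planar curves on the cones also compute the covering gonality. What can actually be proved (Proposition \ref{proposition:Cone}, via a ramification argument on the congruence of lines) is only that $f(C_t)$ lies on a cone $V_{x_t}^{d-c}$ with the gonality map equal to projection from the vertex. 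Moreover, even granting planarity, no count of conditions imposed on $|\mathcal{O}_X(1)|$ produces the constant $2n+1$; indeed your own upper-bound dimension count would then give $\binom{d-c+1}{2}\leqslant 2n$, i.e. it would prove the paper's Conjecture outright --- a strong sign the claim is out of reach. The paper obtains $\frac{(d-c)(d-c+1)}{2}\leqslant 2n+1$ by an entirely different mechanism: the gonality pencils trace rational curves in the contact variety $\ddt_{d-c,F}=\left\{(x,[\ell]) : \ell\cdot X_F\geqslant (d-c)x\right\}$, sweeping out a uniruled subvariety of dimension $\geqslant n$, and the vector-bundle machinery of Section \ref{section:UniversalFamilies} (the canonical bundle formula \eqref{eq:Kddt2}, the global generation Lemma \ref{lem:gg}, and the Ein--Voisin--Pacienza-style genus argument of Lemma \ref{lem:Pac2.3} and Corollary \ref{corollary:RationalCurvesVeryGen}) shows that for very general $F$ any such $\mathrm{GL}(n+2)$-invariant family forces $\frac{r(r+1)}{2}\leqslant 2n+1$ with $r=d-c$. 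This machinery is the core of the proof and is entirely absent from your proposal; without it, or a genuine substitute, the bound $\covgon(X)\geqslant d-\left\lfloor\frac{\sqrt{16n+9}-1}{2}\right\rfloor$ remains unproven.
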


\begin{remark}\label {rem:almost}
For ${n\in\left\{\left.4\alpha^2+3\alpha,4\alpha^2+5\alpha+1\right|\alpha\in \mathbb{N}\right\}}$, the integers $\left\lfloor\frac{\sqrt{16n+9}-1}{2}\right\rfloor$ and $\left\lfloor\frac{\sqrt{16n+1}-1}{2}\right\rfloor$ differ by 1, so \eqref{eq:CovGon}  almost determines the covering gonality (cf. Lemma \ref{lemma:Equality}).\end{remark} 

\smallskip
To prove Theorem \ref{theorem:Main}, we combine various approaches and techniques developed in several works (see \cite{BCD,BDELU,C,E,LP,P,P2,V}). 
The key idea is to relate irreducible curves of low gonality contained in $X\subset \mathbb{P}^{n+1}$ to the cones $V_p^h\subset \mathbb{P}^{n+1}$ swept out by tangent lines having intersection multiplicity at least $h\geqslant 2$ at $p\in X$. 

To this aim, we use the argument of \cite{B,LP}, and we deduce from \cite[Theorem 2.5]{BCD} that, if $C\subset X$ is an irreducible curve which passes through a general point $x\in X$ and admits a map 
$$\varphi\colon C\dashrightarrow \mathbb{P}^1$$ 
of small degree $c\leqslant d-3$, then any fiber of $\varphi$ consists of collinear points (see Proposition \ref{proposition:BCD}).    
By arguing as in \cite[Theorem C]{BDELU}, we prove further that a curve $C\subset X$ as above lies on a cone $V_p^{d-c}$, and the map $\varphi\colon C\dashrightarrow \mathbb{P}^1$ is the projection from the vertex $p\in C$ (cf. Proposition \ref{proposition:Cone}).

Then we follow the approach of \cite{P}, relying on vector bundles techniques as in \cite{E,V}. 
We consider the Grassmannian $\mathbb{G}(1,n+1)$ of lines $\ell\subset \mathbb{P}^{n+1}$ and we define the locus
$$
\ddt_{d-c,X}:=\left\{\left.\left(p,\left[\ell\right]\right)\in X\times\mathbb{G}(1,n+1) \right|\ell\cdot X\geqslant (d-c)p\right\}.
$$
The gonality map $\varphi\colon C\dashrightarrow \mathbb{P}^1$  clearly determines a rational curve in $\ddt_{d-c,X}$.  
Since the curves $C$ cover  $X$, then $\ddt_{d-c,X}$ contains a uniruled subvariety of dimension at least $n$.
This, and an analysis of positivity properties of the canonical bundle of $\ddt_{d-c,X}$, yield numerical restrictions on $d-c$, leading to the lower bound in Theorem \ref{theorem:Main} (cf. Corollary \ref{corollary:RationalCurvesVeryGen} and Theorem \ref{theorem:LowerBound}).

In order to conclude the proof of Theorem \ref{theorem:Main}, we  construct a family of irreducible plane curves covering $X$ and having a singularity of multiplicity $\left\lfloor\frac{\sqrt{16n+1}-1}{2}\right\rfloor$. 
The cones $V_p^h$ are contained in the tangent hyperplane $T_pX\cong \mathbb{P}^n$, and any hyperplane section of $V_p^h$ not containing $p$ is defined by the vanishing of $h-2$ polynomials of degrees $2,3,\dots,h-1$, respectively. 
Then we slightly improve (in the case of lines) a classical result about linear spaces in complete intersections in a projective space (cf. \cite{Pre,C} and Proposition \ref{proposition:Predonzan+}), from which we deduce the existence of a subvariety $Z\subset X$ of dimension at least $n-1$ such that for any $p\in Z$ and $h\leqslant \left\lfloor\frac{\sqrt{16n+1}-1}{2}\right\rfloor$, the cone $V_p^h$ contains a line $\ell_p$ not passing through $p$ (see Lemma \ref{lemma:Z}).
Hence the span of $p$ and $\ell_p$ cuts out on $X$ a plane curve $C_p$ having a singularity at $p$ of multiplicity at least $h$, so that the projection from $p\in C_p$ is a map $C_p\dashrightarrow \mathbb{P}^1$ of degree at most $d-h$.
The family of plane curves obtained by varying $p\in Z$  covers $X$, and the assertion follows by setting $h= \left\lfloor\frac{\sqrt{16n+1}-1}{2}\right\rfloor$ (see Theorem \ref{theorem:UpperBound}).

\smallskip
A couple of questions are in order. First, it would be interesting to characterize the curves computing the covering gonality of $X$  and, in particular, to understand whether, at least if $n$ is sufficiently large and $d\geqslant 2n+1$, they are only the plane curves presented above. 
We discuss this question in \S \ref{ssec:plane}. 

Concerning the exceptional values ${n\in\left\{\left.4\alpha^2+3\alpha,4\alpha^2+5\alpha+1\right|\alpha\in \mathbb{N}\right\}}$,  apart from the trivial case $n=1$, we cannot decide if \eqref{eq:CovGonEq} holds for other  exceptional values. 
However, especially if one believes that for sufficiently large dimension of $X$ the covering gonality is computed by plane curves, it is natural to make the following:
\begin{conjecture}
Let $X\subset \mathbb{P}^{n+1}$ be a very general hypersurface of degree $d\geqslant 2n$.
Then 
\begin{equation*}
\covgon(X)= d-\left\lfloor\frac{\sqrt{16n+1}-1}{2}\right\rfloor.
\end{equation*}
\end{conjecture}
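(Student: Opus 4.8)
The plan is to establish the two inequalities in \eqref{eq:CovGon} separately and then to observe that the two floor functions coincide off the exceptional set. Throughout, write $h:=\left\lfloor\frac{\sqrt{16n+1}-1}{2}\right\rfloor$ and $h':=\left\lfloor\frac{\sqrt{16n+9}-1}{2}\right\rfloor$; unwinding the floors, $h$ is the largest integer with $h(h+1)\leqslant 4n$ and $h'$ the largest with $h'(h'+1)\leqslant 4n+2$. The unifying device is to convert a moving curve $C\subset X$ of small gonality $c=\gon(C)$ into the geometry of the contact cones $V_p^{d-c}$ and of the incidence locus $\ddt_{d-c,X}$, so that the covering gonality is read off as $d$ minus the largest contact order for which these objects are forced to exist.

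For the lower bound, I would start from an irreducible curve $C\subset X$ through a general point $x\in X$ with $\gon(C)=c\leqslant d-3$, together with its gonality pencil $\varphi\colon C\dashrightarrow\mathbb{P}^1$. By Proposition \ref{proposition:BCD} every fibre of $\varphi$ is collinear, and by Proposition \ref{proposition:Cone} the curve lies on a cone $V_p^{d-c}$ with $\varphi$ the projection from the vertex $p$. Setting $\delta:=d-c$, the pencil produces a rational curve in $\ddt_{\delta,X}$; since such curves $C$ sweep out $X$, the locus $\ddt_{\delta,X}$ must contain a uniruled subvariety of dimension at least $n$ (Corollary \ref{corollary:RationalCurvesVeryGen}). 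The crux is then to bound the positivity of $\ddt_{\delta,X}$: realizing $\ddt_{\delta,X}$ as a degeneracy locus inside $X\times\mathbb{G}(1,n+1)$ and computing its canonical bundle via the tautological bundles on the Grassmannian (in the spirit of \cite{E,V,P}), one shows that once $\delta(\delta+1)>4n+2$ the variety $\ddt_{\delta,X}$ carries no uniruled subvariety of dimension $\geqslant n$. Hence $\delta(\delta+1)\leqslant 4n+2$, i.e. $\delta\leqslant h'$ and $c\geqslant d-h'$, which is Theorem \ref{theorem:LowerBound}.

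For the upper bound, I would exhibit moving plane curves attaining gonality $d-h$. Fix $p\in X$ and parametrize the lines through $p$ tangent to $X$ to order $\geqslant h$ by the osculating base $B_p^h\subset\mathbb{P}(T_pX)\cong\mathbb{P}^{n-1}$, a complete intersection of type $(2,3,\dots,h-1)$. A line $\ell_p\subset V_p^h$ missing $p$ corresponds to a line contained in $B_p^h$; the $2$-plane $\Pi=\langle p,\ell_p\rangle$ then cuts out $C_p=X\cap\Pi$ with a point of multiplicity $\geqslant h$ at $p$, so that projection from $p$ gives $\gon(C_p)\leqslant d-h$. The Fano scheme of lines on a general $B_p^h$ has expected dimension $2(n-2)-\frac{(h+3)(h-2)}{2}$, which is already nonnegative when $h(h+1)\leqslant 4n-2$; to reach the sharp range $h(h+1)\leqslant 4n$ I would instead bound the total incidence $\{(p,[\ell_p])\}$ over $p\in X$, whose expected dimension $n+2(n-2)-\frac{(h+3)(h-2)}{2}$ is $\geqslant n-1$ precisely when $h(h+1)\leqslant 4n$. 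This relaxation is the content of the refinement of Predonzan's theorem in Proposition \ref{proposition:Predonzan+}, applicable to the special (non-generic) intersections $B_p^h$, which yields a subvariety $Z\subset X$ of dimension $\geqslant n-1$ carrying such lines (Lemma \ref{lemma:Z}). Letting $p$ vary over $Z$ and checking that the resulting family of plane curves genuinely dominates $X$, one obtains $\covgon(X)\leqslant d-h$ (Theorem \ref{theorem:UpperBound}).

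Combining the two bounds gives \eqref{eq:CovGon}, and I would close the gap by the arithmetic of the two floors: $h'>h$ forces an integer $m$ with $m(m+1)=4n+2$, i.e. $n=\frac{(m-1)(m+2)}{4}$, which for $m\equiv1,2\pmod4$ is precisely $n=4\alpha^2+3\alpha$ or $n=4\alpha^2+5\alpha+1$ (Lemma \ref{lemma:Equality}). For all other $n$ one has $h=h'$, so the two inequalities collapse to the equality \eqref{eq:CovGonEq}, realized by the plane curves $C_p$ built above. I expect the decisive difficulty to be the positivity estimate for the canonical bundle of $\ddt_{\delta,X}$ underlying the lower bound: controlling the canonical class of a degeneracy locus cut out by high-order osculation conditions, and ruling out low-dimensional uniruled subvarieties, is exactly where the genericity of $X$ and the vector-bundle machinery of \cite{E,V,P} must be pushed. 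A secondary obstacle, on the upper-bound side, is ensuring that the special intersections $B_p^h$ behave enough like general ones for the refined Predonzan count to hold, and that the family over $Z$ actually dominates $X$ rather than collapsing to a proper subvariety.
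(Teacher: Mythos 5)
The statement you were asked to prove is the paper's \emph{Conjecture}, not its main theorem, and this distinction is exactly where your argument has a genuine gap. What you have sketched is, in outline, the paper's own proof of Theorem \ref{theorem:Main}: the lower bound $\covgon(X)\geqslant d-\left\lfloor\frac{\sqrt{16n+9}-1}{2}\right\rfloor$ via Propositions \ref{proposition:BCD}, \ref{proposition:Cone} and Corollary \ref{corollary:RationalCurvesVeryGen}; the upper bound $\covgon(X)\leqslant d-\left\lfloor\frac{\sqrt{16n+1}-1}{2}\right\rfloor$ via Proposition \ref{proposition:Predonzan+}, Lemma \ref{lemma:Z} and Theorem \ref{theorem:UpperBound}; and the arithmetic of Lemma \ref{lemma:Equality}. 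But that argument yields the conjectured equality only when (a) $d\geqslant 2n+2$ and (b) $n$ avoids the exceptional set $\left\{\left.4\alpha^2+3\alpha,\,4\alpha^2+5\alpha+1\right|\alpha\in\mathbb{N}^*\right\}$. The Conjecture asserts the equality for \emph{all} $d\geqslant 2n$ and \emph{all} $n$, and the paper explicitly leaves this open (``we cannot decide if \eqref{eq:CovGonEq} holds for other exceptional values''); there is no proof of it in the paper for your attempt to be measured against, and your attempt does not supply one.

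Concretely, two steps of your plan fail outside the theorem's hypotheses. First, at an exceptional $n$ one has $h'=h+1$ (your own closing computation $m(m+1)=4n+2$ shows exactly when these $n$ occur), so your two bounds leave the ambiguity $\covgon(X)\in\{d-h-1,\,d-h\}$, and nothing in your argument selects $d-h$; resolving that ambiguity is the entire content of the conjecture at those $n$. Second, the lower-bound machinery genuinely requires $d\geqslant 2n+2$, not just $d\geqslant 2n$: in the proof of Proposition \ref{proposition:Cone} one needs $\covgon(X)\geqslant d-n>n\geqslant d-c$ to force the residual $0$--cycle of the lines to be supported at a single point, and the final contradiction $s+2\leqslant\frac{n}{n-s}$ uses $d\geqslant 2n+2$; for $d=2n+1$ the corresponding inequality $s+1\leqslant\frac{n}{n-s}$ is actually satisfied at $s=n-1$, so no contradiction arises, and for $d=2n$ it also holds at $s=1$. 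Hence for $d\in\{2n,2n+1\}$ your approach gives no lower bound beyond \eqref{eq:theoremA}, and the conjecture in that degree range remains untouched by this method. In short: your proposal is a correct outline of the proof of Theorem \ref{theorem:Main}, but it proves a strictly weaker statement than the one assigned.
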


\smallskip
The paper is organized as follows.
In Section \ref{Section:CoveringFamilies} we are concerned with the relations between the cones of lines having high tangency order at a point $p\in X$ and the geometry of curves having low gonality and covering $X$.
On one hand, we discuss the existence of lines in $V_p^h$ not belonging to the ruling of the cone, and we achieve the upper bound in Theorem \ref{theorem:Main}.
On the other hand, we prove that any curve $C\subset X$ through a general point of $X$ having sufficiently small gonality lies on some $V_p^h$ and we describe the  gonality map $C\dashrightarrow \mathbb{P}^1$.

In Section \ref{section:UniversalFamilies} we study positivity properties of the loci $\ddt_{d-c,X}$, deducing numerical conditions on the existence of uniruled subvarieties of $\ddt_{d-c,X}$.  
In Section \ref{section:Proof} we finish the proof of Theorem \ref{theorem:Main}, and in Section \ref {sec:spec} we make some final remarks and discuss some open problems.

\subsection*{Notation}

We work over $\mathbb{C}$.
By \emph{variety} we mean a complete reduced algebraic variety $X$, unless otherwise stated.
By \emph{curve} we mean a variety of dimension 1.
We say that a property holds for a \emph{general} (resp. \emph{very general}) point ${x\in X}$ if it holds on a Zariski open nonempty subset of $X$ (resp. on the complement of the countable union of proper subvarieties of $X$).


\section{Geometry of covering families of curves}\label{Section:CoveringFamilies}

\subsection{High tangency cones to hypersurfaces}\label{section:cones}

Let $X\subset \mathbb{P}^{n+1}$ be a hypersurface defined by the vanishing of a non--zero homogeneous polynomial $F\in \mathbb{C}[y_0,\ldots,y_{n+1}]$ of degree $d\geqslant 2$.
\begin{definition}
Given a point $x\in X$ and an integer $2\leqslant h\leqslant d$, the \emph{cone} $V^h_x(X)\subset \mathbb{P}^{n+1}$ \emph{of tangent lines of order} $h$ \emph{at} $x\in X$ (denoted by $V^ h_x$ if there is no danger of confusion) is the set of all lines $\ell\subset \mathbb{P}^{n+1}$ having intersection multiplicity at least $h$ with $X$ at $x$.
\end{definition}
The variety $V^h_x$ is a cone  with vertex containing $x$, and it is defined by the $h-1$ equations deduced from the Taylor expansion of $F$ at the point $x$,
\begin{equation}\label{eq:Gj}
G_k(y_0,\ldots,y_{n+1}):=\sum_{1\leqslant i_1\leqslant \dots\leqslant i_k\leqslant n+1}y_{i_1}\cdots y_{i_k} \frac{\partial^k F}{\partial y_{i_1}\cdots \partial y_{i_k}}(x)=0 \quad \text{for } 1\leqslant k\leqslant  h-1.
\end{equation}
In particular, if $X$ is smooth at $x$, then $V^2_x$ is the (projective) tangent hyperplane $T_xX\subset \mathbb{P}^{n+1}$. 
When $h\geqslant 3$, the variety $V^h_x$ is  a cone in $T_xX\cong \mathbb{P}^{n}$ with vertex at $x\in X$, and any hyperplane section of $V_x^h$ not containing $x$ is a subvariety $\Lambda^h_{x}\subset \mathbb{P}^{n-1}$, uniquely determined up to isomorphism, defined by the vanishing of $h-2$ polynomials of degrees $2,3,\dots,h-1$, respectively.

\begin{lemma}\label{lemma:LambdaGeneral}
Let $X\subset \mathbb{P}^{n+1}$ be a general hypersurface of degree $d\geqslant 2$, let $x\in X$ be a general point, and let $2\leqslant h\leqslant d$ be an integer.
Then $\Lambda_{x}^h$ is a general complete intersection of type $(2,3,\dots,h-1)$ in $\mathbb{P}^{n-1}$. 
\begin{proof}
We may assume that $x=[0,\ldots,0,1]$ and that $T_{x}X=V(y_{n})$.
Then $X$ has equation of the form
$$
F(y_0,\ldots, y_{n+1})=y_ny_{n+1}^{d-1}+f_2(y_0,\ldots,y_n)y_{n+1}^{d-2}+\ldots+f_{d-1}(y_0,\ldots,y_n)y_{n+1}+f_d(y_0,\ldots,y_n)=0,
$$
where the polynomials $f_i$ are homogeneous of degree $i$. 
Then $\Lambda^h_{x}$, as a subvariety of the $\mathbb P^ {n-1}$ with equations $y_n= y_{n+1}=0$, is defined by the equations
$$
f_2(y_0,\ldots,y_{n-1},0)=\dots=f_{h-1}(y_0,\ldots,y_{n-1},0)=0.
$$
Let $W$ be  the  sub--vector space of $\mathbb{C}[y_0,\ldots,y_{n+1}]_d$ consisting of all polynomials $F$ as above.
Consider the linear map
\begin{equation}\label{eq:zeta}
\begin{array}{rrcl}
  \zeta\colon & W  &  \longrightarrow & \displaystyle \prod_{i=2}^{h-1} H^{0}\left(\mathbb{P}^{n-1},\mathcal{O}_{\mathbb{P}^{n-1}}(i) \right)  \\
     &  F & \longmapsto & \displaystyle \Big(f_2(y_0,\ldots,y_{n-1},0),\ldots, f_{h-1}(y_0,\ldots,y_{n-1},0)\Big).
\end{array}
\end{equation}
To prove the assertion amounts to show that  $\zeta$ is  surjective.
One has
$$
\dim (W)= {d+n+1\choose d}-n-2\quad \text{and}\quad \dim \left(\prod_{i=2}^{h-1} H^{0}\left(\mathbb{P}^{n-1},\mathcal{O}_{\mathbb{P}^{n-1}}(i) \right)\right)={h+n-1\choose h-1}-n-1. 
$$ 
Moreover ${\rm ker}(\zeta)$ consists of polynomials divisible by $y_n$, 
i.e. polynomials $F$ such that $f_i(y_0,\ldots,y_n)=y_ng_{i-1}(y_0,\ldots,y_n)$, where $2\leqslant i\leqslant h-1$ and the polynomials $g_{j}$ have degree $j$.
Then 
$$
\dim ({\rm ker}(\zeta))={d+n+1\choose d}-{h+n-1\choose h-1}-1= \dim (W)- \dim \left(\prod_{i=2}^{h-1} H^{0}\left(\mathbb{P}^{n-1},\mathcal{O}_{\mathbb{P}^{n-1}}(i) \right)\right),
$$
proving that  $\zeta$ is surjective.
\end{proof}
\end{lemma}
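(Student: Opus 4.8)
The plan is to reduce the assertion to the surjectivity of a single linear map, after using a projective change of coordinates to fix the flag $(x,T_xX)$. Since $X$ is general and $x\in X$ is general, $X$ is smooth at $x$ and the pair consisting of $x$ and its tangent hyperplane $T_xX$ can be carried by a projective transformation to the standard position $x=[0,\ldots,0,1]$, $T_xX=V(y_n)$; as this action preserves genericity, it suffices to work in this normal form. First I would expand $F$ around $x$ and record that a defining polynomial compatible with $x\in X$ and $T_xX=V(y_n)$ has the shape $F=y_ny_{n+1}^{d-1}+\sum_{i=2}^{d}f_i(y_0,\ldots,y_n)\,y_{n+1}^{d-i}$, with $f_i$ homogeneous of degree $i$. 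Comparing this with the Taylor equations \eqref{eq:Gj} cutting out $V_x^h$, I would identify $\Lambda_x^h$, viewed in the $\mathbb{P}^{n-1}$ with coordinates $y_0,\ldots,y_{n-1}$, as the complete intersection defined by $f_2(y_0,\ldots,y_{n-1},0)=\cdots=f_{h-1}(y_0,\ldots,y_{n-1},0)=0$.

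The second step is to package this identification as a linear map $\zeta$ from the parameter space $W$ of all polynomials $F$ of the above shape to the product $\prod_{i=2}^{h-1}H^0(\mathbb{P}^{n-1},\mathcal{O}_{\mathbb{P}^{n-1}}(i))$, sending $F$ to the tuple of restricted coefficients $f_i(y_0,\ldots,y_{n-1},0)$. The content of the lemma is then exactly that $\zeta$ is onto: if the image is everything, then as $F$ varies in $W$ the tuple $(f_2,\ldots,f_{h-1})$ sweeps out the whole space of complete intersections of type $(2,3,\ldots,h-1)$, so the general $\Lambda_x^h$ is general in the intended sense. Thus the geometric problem becomes the surjectivity of an explicit linear map between two finite-dimensional vector spaces.

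To prove surjectivity I would compute the kernel and compare dimensions. An element of $\ker(\zeta)$ is an $F$ for which each restricted coefficient vanishes, i.e. $y_n$ divides $f_i$ for every $2\leqslant i\leqslant h-1$; writing $f_i=y_ng_{i-1}$ for these indices and leaving $f_h,\ldots,f_d$ free, one reads off $\dim\ker(\zeta)$ as a sum of binomial coefficients. The remaining step is purely combinatorial: using the hockey-stick identity to evaluate $\dim(W)$ and $\dim\bigl(\prod_{i=2}^{h-1}H^0(\mathbb{P}^{n-1},\mathcal{O}_{\mathbb{P}^{n-1}}(i))\bigr)$, and Pascal's rule to simplify the kernel count, one checks that $\dim\ker(\zeta)=\dim(W)-\dim(\mathrm{target})$, which forces $\zeta$ to be onto.

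I expect the genuine obstacle to be conceptual rather than technical, and otherwise mild. The point requiring real care is the reduction to the normal form: one must be sure that fixing $(x,T_xX)$ by a coordinate change loses no generality, and that after this normalization the coefficients $f_2,\ldots,f_d$ are genuinely free, independent parameters cut out by the imposed conditions $x\in X$ and $T_xX=V(y_n)$. Once this is secured, the rest is the bookkeeping of the previous paragraph: the binomial identities must be carried out so that the kernel has exactly the predicted codimension, but this is a routine hockey-stick and Pascal computation, where only an arithmetic slip could do damage, rather than a deep difficulty.
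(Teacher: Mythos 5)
Your proposal is correct and follows essentially the same route as the paper's proof: normalize to $x=[0,\ldots,0,1]$, $T_xX=V(y_n)$, identify $\Lambda_x^h$ with the vanishing of the restricted coefficients $f_2,\ldots,f_{h-1}$, and prove surjectivity of the linear map $\zeta$ by computing $\dim\ker(\zeta)$ (polynomials with $y_n\mid f_i$ for $2\leqslant i\leqslant h-1$) and invoking rank–nullity. The binomial bookkeeping you outline does close as claimed, matching the paper's counts $\dim(W)=\binom{d+n+1}{d}-n-2$, $\dim(\mathrm{target})=\binom{h+n-1}{h-1}-n-1$, and $\dim\ker(\zeta)=\binom{d+n+1}{d}-\binom{h+n-1}{h-1}-1$.
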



\subsection{Lines on complete intersections}\label{section:Predonzan+}

Let $1\leqslant s \leqslant m-2$ and $d_1,d_2,\dots,d_s\in \mathbb{N}$ be integers, and set $\underline{d} := (d_1,\ldots,d_s)$. 
We consider the vector space
$$
S_{\underline{d}}:= \bigoplus_{i=1}^s H^{0}\left(\mathbb P^ m,\mathcal{O}_{\mathbb{P}^{m}}(d_i) \right),
$$
and its Zariski open subset
$$
S^*_{\underline{d}}:= \bigoplus_{i=1}^s \left( H^{0}\left(\mathbb P^ m,\mathcal{O}_{\mathbb{P}^{m}}(d_i) \right)\setminus \{0\}\right).
$$
For any $u := (F_1,\ldots,F_s) \in S^*_{\underline{d}}$, we denote by $Y_u:=V (F_1,\ldots,F_s)\subset \mathbb{P}^m$ the closed subscheme defined by the vanishing of the $s$ homogeneous polynomials $F_1,\ldots,F_s$.
When $u \in S^*_{\underline{d}}$ is a general point, $Y_u\subset \mathbb{P}^m$ is a smooth irreducible complete intersection of dimension $m-s \geqslant 2$.
When $s=1$, we set $d=d_1$ and we simply write $S_{d}:= H^{0}\left(\mathbb P^ m,\mathcal{O}_{\mathbb{P}^{m}}(d) \right)$ and $S^*_{d}:= H^{0}\left(\mathbb P^ m,\mathcal{O}_{\mathbb{P}^{m}}(d) \right)\setminus \{0\}$.
Moreover, given a polynomial $F\in S^*_d$, we denote by  $X_F:=V(F)\subset  \mathbb{P}^m$ the hypersurface defined by the vanishing of $F$.

In \cite{Pre}, Predonzan gave necessary and sufficient conditions for the existence of a $k$--dimensional linear subvariety of $Y_u$, with $u := (F_1,\ldots,F_s) \in S^*_{\underline{d}}$ (see also \cite[Theorem 2.1]{C} and \cite{Bo, DeMa}).
The following proposition provides a slight extension of Predonzan's result in the case $k=1$.
We would like to mention that, after having proved this statement, we noticed that related results concerning the cases $k\geqslant 1$ are obtained in \cite{Mi} via a different strategy. 

\begin{proposition}\label{proposition:Predonzan+}
Let $1 \leqslant s \leqslant m-2$  and $d_1,d_2,\dots,d_s$ be positive integers such that $\Pi_{i=1}^s d_i >2$.
Consider the locus
$$
W_{\underline{d}}:=\left\{u\in S^*_{\underline{d}} \left| Y_u \text{ contains a line}\right.\right\} \subseteq S_{\underline{d}}
$$
and set 
\begin{equation}\label{eq:inequality}
t:= \max \left\{ 0, \sum_{i=1}^s d_i + s - 2(m-1)\right\} \quad \text{and}\quad \theta:= \max \left\{ 0, 2(m-1)-\sum_{i=1}^s d_i - s \right\}.
\end{equation}
Then $W_{\underline{d}}$ is nonempty, irreducible, and
$$
\codim_{S_{\underline{d}}}(W_{\underline{d}}) = t.
$$
Furthermore, if $u \in W_{\underline{d}}$ is a general point, then $Y_u \subset \mathbb{P}^m$ is a smooth complete intersection of dimension $m-s$, containing a family of lines of dimension $\theta$. 
If $t>0$, and if $u \in W_{\underline{d}}$ is a general point, then $Y_u$  contains a unique line.
\begin{proof}  
The case $t=0$, is the one considered by Predonzan, in which the assertion holds
$W_{\underline{d}}=S^*_{\underline{d}}$ (cf. \cite[Theorem 2.1]{C}).

Hence we may assume $t>0$. 
Consider the incidence correspondence
$$
J:=\left\{\left.\left(\left[\ell\right], u \right) \in \mathbb{G}(1,m) \times S^*_{\underline{d}}\right| \ell  \subset Y_u\right\}
$$ 
with the projections
$$\mathbb{G}(1,m) \stackrel{\pi_1}{\longleftarrow} J \stackrel{\pi_2}{\longrightarrow} S^*_{\underline{d}}.$$
Notice that $J$ is an open dense subset of a vector bundle over $\mathbb{G}(1,m)$ via $\pi_1$. 
Indeed, for any $[\ell]\in \mathbb{G}(1,m)$, the fiber $\pi_1^{-1}\left(\left[\ell\right]\right)$  equals 
$\bigoplus_{i=1}^s \big ( H^{0}\left(\mathcal{I}_{\ell/\mathbb{P}^{m}}(d_i)\right)\setminus \{0\}\big )$, where $\mathcal{I}_{\ell/\mathbb{P}^{m}}$ is the ideal sheaf of $\ell$ in $\mathbb{P}^{m}$. 
Thus $J$ is smooth and irreducible and
$$
 \dim (J) = 2(m-1) + \sum_{i=1}^s h^{0}\left(\mathcal{O}_{\mathbb{P}^{m}}(d_i)\right) -\sum_{i=1}^s (d_i+1)= \sum_{i=1}^s {d_i+m\choose m} - t>0,
$$
where we used the assumption $t>0$. Since $W_{\underline{d}}=\pi_2(J)$, then $W_{\underline{d}}$ is nonempty and irreducible.

We claim that, for any $[\ell]\in \mathbb{G}(1,m)$, if 
$([\ell],u)\in \pi_1^{-1}\left(\left[\ell\right]\right)$ is general (so that $u\in W_{\underline{d}}$ is general), then $Y_u$ is a smooth complete intersection of dimension $m-s$. This is an immediate consequence of Bertini's theorem applied to the blow--up of $\mathbb P^ m$ along $\ell$, noting that the strict transforms of the linear systems $|H^{0}\left(\mathcal{I}_{\ell/\mathbb{P}^{m}}(d_i)\right)|$, with $1\leqslant i\leqslant s$, are base point free.

 If $u\in W_{\underline{d}}$ is general, then 
$$
 \dim (W_{\underline{d}})  =  \dim (J)  -  \dim (\pi_2^{-1}(u)) = \sum_{i=1}^s {d_i+m\choose m} - t -  \dim (\pi_2^{-1}(u)),
 $$
 thus \eqref{eq:inequality} gives 
\begin{equation}
\begin{split}\label{codimT}
 \codim_{S_{\underline{d}}} (W_{\underline{d}}) &  = t + \dim (\pi_2^{-1}(u)).
\end{split}
\end{equation}

Next we show that, for $u\in W_{\underline{d}}$ general, one has $\dim\left(\pi_2^{-1}(u)\right)=0$.
To this aim, we  argue as in \cite[Proposition 2.1]{Bo} and therefore we will be brief.
Let $[\ell] \in \mathbb{G}(1,m)$ and let $[y_0,y_1, \ldots, y_m]$ be coordinates in $\mathbb{P}^m$ such that $I_{\ell} := (y_2,\dots,y_m)$. 
For 
$$
\left(\left[\ell\right],u\right) \in \pi_1^{-1}\left(\left[\ell\right]\right)\subset J,\quad \text{with } \quad u = (F_1, \ldots, F_s),
$$
we can write
$$
F_h = \sum_{i=2}^m y_i \; P_h^{(i)} + R_h, \; 1 \leqslant h \leqslant s,
$$
where
\begin{equation}\label{eq:Pt}
P_h^{(i)} = \sum_{\mu_0+\mu_1 = d_h-1} c^{(i)}_{h,\mu_0,\mu_1} y_0^{\mu_0} y_1^{\mu_1} \in \mathbb{C}[y_0,y_1]_{d_h-1}, \quad \text{for } 1 \leqslant h \leqslant s \text{ and }  2 \leqslant i \leqslant m,
\end{equation} 
whereas $R_h \in I^2_{\ell}$.  
We may assume $u$ general, so that $Y_u$ is smooth and the normal sheaf $N_{\ell/Y_{u}}$ is a vector bundle on $\ell$, fitting in the exact sequence
\begin{equation}\label{eq:normalbundle}
0 \to N_{\ell/Y_u} \to N_{\ell/\mathbb{P}^m} \cong \mathcal{O}_{\mathbb{P}^{1}}(1)^{\oplus (m-1)} \to \left.N_{Y_u/\mathbb{P}^m}\right|_{\ell} \cong \bigoplus_{h=1}^s\mathcal{O}_{\mathbb{P}^{1}}(d_h)  \to 0.
\end{equation}
Any $\xi \in H^0(\ell, N_{\ell/\mathbb{P}^m})$ can be identified with a collection of $m-1$ linear forms on $\mathbb{P}^1 \cong \ell$
$$
\varphi_i^{\xi}(y_0,y_1) := a_{i,0} y_0 + a_{i,1} y_1, \quad\text{with } 2 \leqslant i \leqslant m,
$$
whose coefficients form the $(m-1) \times 2$ matrix
$$
A_\xi:= (a_{i,j}),\quad\text{where } 2 \leqslant i \leqslant m\text{ and } 0 \leqslant j \leqslant 1.
$$
By abusing notation, we identify $\xi$ with $A_{\xi}$. 
Then the map  
\[H^0\left(\ell, N_{\ell/\mathbb{P}^m}\right) \stackrel{\sigma}{\longrightarrow} H^0\left(\ell, \left.N_{Y_{u_{\ell}}/\mathbb{P}^m}\right|_{\ell}\right)\]
 arising from \eqref{eq:normalbundle},  is given by
\begin{equation}\label{eq:linsys}
A_{\xi} \stackrel{\sigma}{\longrightarrow} \left(\sum_{0 \leqslant j \leqslant 1 < i \leqslant m} a_{i,j} y_j P_h^{(i)} \right)_{1 \leqslant h \leqslant s}.
\end{equation}
Notice that  $t>0$  is equivalent to $h^0\left(\ell, N_{\ell/\mathbb{P}^m}\right) < h^0\left(\ell, \left.N_{Y_{u}/\mathbb{P}^m}\right|_{\ell}\right)$. 

\begin{claim}\label{claim:2} The map $\sigma$ is injective.
Thus $h^0(N_{\ell/{Y_{u}}}) =0$, i.e. the {\em Fano scheme} $F({Y_u})$ parametrizing lines in ${Y_u}$ contains $[\ell]$ as a zero--dimensional integral component.
\begin{proof}[Proof of Claim \ref{claim:2}] 
By  \eqref{eq:Pt}, the expression
$\sum_{0 \leqslant j \leqslant 1 < i \leqslant m} a_{i,j} y_j P_h^{(i)},$ for  $1 \leqslant h \leqslant s$, reads 
\begin{equation*}\label{eq:aiutoaiuto}
\begin{split}
& a_{2,0} \left( \sum_{\mu_0+\mu_1 = d_h-1} c^{(2)}_{h,\mu_0,\mu_1} y_0^{\mu_0+1} y_1^{\mu_1} \right) +
a_{2,1} \left( \sum_{\mu_0+\mu_1 = d_h-1} c^{(2)}_{h,\mu_0,\mu_1} y_0^{\mu_0} y_1^{\mu_1+1} \right) + \cdots + \\
& + \cdots + a_{m,0} \left( \sum_{\mu_0+\mu_1 = d_h-1} c^{(m)}_{h,\mu_0,\mu_1} y_0^{\mu_0+1} y_1^{\mu_1} \right) 
+ a_{m,1} \left( \sum_{\mu_0+\mu_1 = d_h-1} c^{(m)}_{h,\mu_0,\mu_1} y_0^{\mu_0} y_1^{\mu_1+1} \right).
\end{split}
\end{equation*}
By equating to $0$ the coefficients of $y_0^{d_h-k}y_1^k$, for $0 \leqslant k \leqslant d_h$, we find a homogeneous linear system
of $\sum_{h=1}^s d_h + s$ equations in the $2(m-1)$ variables $a_{i,j}$ and coefficients $c^{(i)}_{h,\mu_0,\mu_1}$, where $2 \leqslant i \leqslant m$ and $0 \leqslant j \leqslant 1$.
By \eqref{eq:linsys}, the map $\sigma$ is injective if and only if this system admits only the trivial solution. 
One  checks that this is  the case for a general choice of the coefficients $c^{(i)}_{h,\mu_0,\mu_1}$ and because of the assumption $t>0$ equivalent to $\sum_{i=1}^s d_i > 2(m-1) - s$.
Thus we deduce from \eqref{eq:normalbundle} that $h^0(\ell, N_{\ell/{Y_u}}) =0$.
\end{proof}
\end{claim}

By the irreducibility of $J$ and Claim \ref{claim:2}, for $u \in W_{\underline{d}}$ general, the Fano scheme $F(Y_u)$ is zero--dimensional, i.e. $Y_u$ contains finitely many lines. 
In particular, \eqref{codimT} yieds that $\codim_{S_{\underline{d}}}(W_{\underline{d}}) = t$ as desired. 

Finally, to show that $Y_u$ contains only one line for $u \in W_{\underline{d}}$ general, one makes a count of parameters, left to the reader, similar to the one in \eqref {codimT}, which shows that the codimension in $S^*_{\underline d}$ of the locus of $u$ such that $Y_u$  contains at least two lines is strictly larger than $t$.
\end{proof}
\end{proposition}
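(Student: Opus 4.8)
The plan is to study the incidence correspondence
$$J:=\left\{\left(\left[\ell\right],u\right)\in\mathbb{G}(1,m)\times S^*_{\underline{d}}\mid\ell\subset Y_u\right\}$$
with its projections $\pi_1\colon J\to\mathbb{G}(1,m)$ and $\pi_2\colon J\to S^*_{\underline{d}}$, and to deduce every assertion from a dimension count on $J$ and on the fibres of $\pi_2$. Since $W_{\underline{d}}=\pi_2(J)$, its nonemptiness and irreducibility will follow as soon as $J$ is shown to be irreducible. I would dispose of the case $t=0$ at once by invoking Predonzan's classical result, for which $W_{\underline{d}}=S^*_{\underline{d}}$ and the general complete intersection carries a family of lines of the expected dimension $\theta=2(m-1)-\sum_h d_h-s\geqslant 0$; the content of the statement is therefore concentrated in the range $t>0$.

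First I would analyze $\pi_1$. For fixed $[\ell]$, the fibre $\pi_1^{-1}([\ell])$ is the open subset of $\bigoplus_{i=1}^s H^0(\mathcal{I}_{\ell/\mathbb{P}^m}(d_i))$ of tuples with nonzero entries; as $h^0(\mathcal{I}_{\ell/\mathbb{P}^m}(d_i))=\binom{d_i+m}{m}-(d_i+1)$, the projection $\pi_1$ presents $J$ as a dense open subset of a vector bundle over the smooth irreducible base $\mathbb{G}(1,m)$. Hence $J$ is smooth and irreducible, of dimension $\sum_{i=1}^s\binom{d_i+m}{m}-t$, and $W_{\underline{d}}=\pi_2(J)$ is nonempty and irreducible. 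That the general $Y_u\in W_{\underline{d}}$ is a smooth complete intersection of dimension $m-s$ I would obtain from Bertini, applied to the strict transforms of the systems $|\mathcal{I}_{\ell/\mathbb{P}^m}(d_i)|$ on the blow-up of $\mathbb{P}^m$ along a line, these being base-point free.

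The crux is to control the general fibre of $\pi_2$. The formula $\dim W_{\underline{d}}=\dim J-\dim\pi_2^{-1}(u)$ for general $u$ gives $\codim_{S_{\underline{d}}}(W_{\underline{d}})=t+\dim\pi_2^{-1}(u)$, so the codimension claim is equivalent to the general $Y_u$ containing only finitely many lines. I would reduce this to a normal-bundle computation: the Zariski tangent space to the Fano scheme $F(Y_u)$ at $[\ell]$ is $H^0(\ell,N_{\ell/Y_u})$, and the normal sequence
$$0\to N_{\ell/Y_u}\to N_{\ell/\mathbb{P}^m}\cong\mathcal{O}_{\mathbb{P}^1}(1)^{\oplus(m-1)}\to\left.N_{Y_u/\mathbb{P}^m}\right|_\ell\cong\bigoplus_{h=1}^s\mathcal{O}_{\mathbb{P}^1}(d_h)\to0$$
identifies $H^0(N_{\ell/Y_u})$ with the kernel of the coboundary map $\sigma$ on global sections. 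Writing $\sigma$ in coordinates adapted to $\ell$, it becomes a homogeneous linear system of $\sum_h d_h+s$ equations in the $2(m-1)$ entries of the matrix of a section of $N_{\ell/\mathbb{P}^m}$, whose coefficients are the linear Taylor data of the $F_h$ transverse to $\ell$. The main obstacle is precisely here: one must show that for a general such choice this system has only the trivial solution — which is numerically possible exactly because $t>0$ forces $2(m-1)<\sum_h d_h+s$. I would settle this genericity by exhibiting a single explicit tuple $(F_1,\dots,F_s)$ for which $\sigma$ is injective, injectivity then being an open condition spread over all of $W_{\underline{d}}$ by the irreducibility of $J$.

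Granting $h^0(N_{\ell/Y_u})=0$, the point $[\ell]$ is a reduced isolated point of $F(Y_u)$, so by semicontinuity and irreducibility of $J$ the general fibre $\pi_2^{-1}(u)$ is zero-dimensional; hence $\codim_{S_{\underline{d}}}(W_{\underline{d}})=t$ and $\theta=0$ in this range. Finally, to sharpen ``finitely many'' to ``exactly one'' when $t>0$, I would repeat the incidence count for the correspondence of triples $([\ell_1],[\ell_2],u)$ with $\ell_1\neq\ell_2$ two lines in $Y_u$: its image in $S^*_{\underline{d}}$ has dimension at most $\sum_i\binom{d_i+m}{m}-2t$, hence codimension at least $2t>t$, so the general $u\in W_{\underline{d}}$ carries a unique line.
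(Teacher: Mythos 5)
Your proposal is correct and follows essentially the same route as the paper: the incidence correspondence $J$ realized as a dense open subset of a vector bundle over $\mathbb{G}(1,m)$, Bertini on the blow-up along $\ell$ for smoothness, reduction of the codimension claim to injectivity of the coboundary map $\sigma$ from the normal bundle sequence (verified for a general/explicit choice of coefficients and spread by irreducibility of $J$), and a parameter count for uniqueness. The only substantive difference is that you carry out the two-line incidence count (obtaining codimension at least $2t>t$) which the paper explicitly leaves to the reader; your version of that step is the correct one, modulo checking that pairs of incident lines contribute no more than disjoint pairs.
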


\begin{remark}\label{remark:Quadrics}
The case of quadrics is not covered by the assertion of Proposition \ref{proposition:Predonzan+}.
However, it is well known that a general quadric of dimension $k\geqslant 2$ contains a $(2k-3)$--dimensional family of lines, so that---using the notation of Proposition \ref{proposition:Predonzan+}---a general complete intersection of type $\underline{d}=(1,\dots,1,2)$ contains a family of lines of dimension $\theta$ and $W_{\underline{d}}=S^*_{\underline{d}}$ (cf. e.g. \cite[Corollary 2.2]{Bo}). 
On the other hand, the locus parameterizing conics containing a line coincides with the locus of singular conics and it has codimension $1$ in $H^0\left(\mathbb P^ 2, \mathcal{O}_{\mathbb{P}^2}(2)\right)$. 
In particular, if $\underline{d}=(1,\dots,1,2)$ and the general point of $S^*_{\underline{d}}$ parameterizes a conic, then $W_{\underline{d}}\subset S^*_{\underline{d}}$ has codimension 1.
\end{remark}


\subsection{The upper bound for the covering gonality}\label{section:UpperBound}
Given a  hypersurface $X\subset \mathbb{P}^{n+1}$, a smooth point $x\in X$ and an integer $h\geqslant 3$,  we are concerned with the existence of lines in  $V_x^h$ which do not pass through $x$. 
More precisely, we want to consider the closed subset $X^h_1\subset X$ defined as
$$
X^ h_1:=\overline{\left\{ x\in X\left| \; X \text{ is smooth at }x \text{ and } V_x^h \text{ contains a line which does not pass through }   x \right.\right\}}.
$$
\begin{lemma}\label{lemma:Example}
Let $n,d,h\geqslant 3$ be integers such that $d\geqslant 2n$ and $h(h+1)= 4n$. 
Then there exist a hypersurface $X\subset \mathbb{P}^{n+1}$ of degree $d$ and two points $p,q\in X$ such that:
\begin{itemize}
  \item[(i)] $X$ is smooth at $p$ and $q$;
 \item[(ii)]  $p\not\in X^ h_1$;
  \item[(iii)] $q\in X^ h_1$ and  $V^h_{q}$ contains a line $\ell$ such that $q\not\in\ell$, $p\in \ell$ and $V^h_{q}$ is smooth along $\ell$.
\end{itemize}
\begin{proof} 
Let $[y_0,\dots,y_{n+1}]$ be the coordinates in $\mathbb{P}^{n+1}$ and consider a homogeneous polynomial 
$$
F(y_0,\ldots,y_{n+1})=\sum _{|I|=d} a_I y^I
$$ 
of degree $d$, where for a multi--index $I=(i_0,\dots,i_{n+1})$, we denote by $|I|$ its length and we set ${y}^I:=y_0^{i_0}\cdots y_{n+1}^{i_{n+1}}$. 
Note that $I$ varies among the points with integral coordinates in the $n$--simplex $\Delta_d\subset \mathbb{R}^{n+1}_{\geqslant 0}$ with vertices $Q_0:=(d,0,\dots,0),\dots,Q_{n+1}:=(0,\dots,0,d)$.
For $0\leqslant \alpha\leqslant d$, we denote by $\Delta_{\alpha}^{(0)}\subset \Delta_d$ the $n$--subsimplex with vertices
$$
Q_0=(d,0,\dots,0),\, (d-\alpha,\alpha,0,\dots,0),\ldots,(d-\alpha,0,\dots,0,\alpha).
$$
 
Let $q:=[1,0,\dots,0], p:=[0,1,\dots, 0] \in \mathbb{P}^{n+1}$ and assume that the hypersurface $X := X_F \subset \mathbb{P}^{n+1}$ passes through $q$ and $p$, so that $a_{(d,0,\dots,0)}=a_{(0,d,\dots,0)}=0$.
Then $V^h_{q}$ depends on the multi--indices $I$ belonging to $\Delta_{h-1}^{(0)}$. 
In fact,  $V^h_q$ has equations
$$
G_{k}(y_0,\ldots,y_{n+1}):=\sum_{|J|=k}{\frac{\partial^k F}{\partial y^J}}(q) y^J=0\quad \text{for}\quad 1\leqslant k\leqslant h-1 \quad \text{where}\quad \frac{\partial^k }{\partial y^J}:=\frac{\partial^k}{\partial y_0^{j_0}\cdots \partial y_{n+1}^{j_{n+1}}}.
$$
In particular
$$
 \frac{\partial^k F}{\partial y^J}=\sum_{|I|=d,\,|J|=k,\, I\geqslant J} \frac{I!}{(I-J)!} a_I y^{I-J},
 $$
where, as usual,  $I!:=i_0!i_1!\cdots i_{n+1}!$ and $I-J$ and $I\geqslant J$ are defined componentwise.
For any $I\not\in \Delta_{k}^{(0)}$, $J$ such that $|J|=k$ and $I\geqslant J$, the value of $y^{I-J}$ at $q$ is zero.
Therefore, the non--zero coefficients $a_I$ in $G_k$ are such that $I\in \Delta_{k}^{(0)}$ for all $1\leqslant k\leqslant h-1$. 
Hence, in particular, $I\in \Delta_{h-1}^{(0)}$. 
 
Similarly, the non--zero coefficients $a_I$ in the equations of $V^ h_p$  are such that $I$ is an integral point in the $n$--subsimplex $\Delta_{h-1}^{(1)}$ with vertices 
$$
Q_1=(0,d,0,\dots,0),\, (h-1,d-h+1,0,\dots,0),\ldots,(0,d-h+1,0,\dots,0,h-1).
$$

Choose the coefficients of $F$ so that $G_1=y_{n+1}$, hence $q$ is a smooth point for $X$. 
The cone $V_q^h$ lies in $T_qX = V(y_{n+1}) \cong \mathbb{P}^n$.
Moreover, setting $H:= V(y_0)$, the section of $V_q^h$ with $H$ is 
$\Lambda_q^h= V(y_0,y_{n+1}, G_2,\ldots, G_{h-1})\subset V(y_0,y_{n+1})\cong \mathbb{P}^{n-1}$. 
Next we apply Proposition \ref {proposition:Predonzan+} with $m=n-1$, $s=h-2$ and $d_i=i+1$. 
By \eqref{eq:inequality}, we have
\begin{equation*}
t=\sum_{i=1}^s d_i + s - 2(m-1)=\frac{h(h+1)}{2}-2n+1=1.
\end{equation*}
If $h\geqslant 4$, Proposition \ref{proposition:Predonzan+} ensures the existence of a locus $W_{(2,\ldots,h-1)}$ of codimension 1 in $S_{(2,\ldots,h-1)}$, whose general point is a complete intersection of type $(2,\ldots,h-1)$ in $\bP^{n-1}$ containing a line.
The same holds for $h=3=n$ by Remark \ref{remark:Quadrics}. 

Since $W_{(2,\ldots,h-1)}$ has codimension 1 in $S_{(2,\ldots,h-1)}$ we can choose the coefficients $a_I$, with $I\in \Delta_{h-1}^{(1)}$ and $I\neq (0,d,0,\dots, 0)$, general enough so that $X$ is smooth at $p$ and  $\Lambda^ h_p$ contains no line. 
Moreover, we can choose the coefficients $a_I$, with $I\in \Delta_{h-1}^{(0)}$ which we did not fix yet, so that $p\in V^ h_q$ and $V^ h_q$ contains a line through $p$ different from  the line $\langle p,q\rangle$ and it is smooth along this line. 
This can be done without altering the coefficients $a_I$, with $I\in \Delta_{h-1}^{(1)}$, we already chose, since $\Delta_{h-1}^{(0)}$ and $\Delta_{h-1}^{(1)}$ have no integral point in common. 
Indeed, $\Delta^{(0)}_{h-1}\cap\Delta^{(1)}_{h-1}$ is not empty if and only if $(h-1,d-h+1,0,\ldots,0)\in \Delta^{(0)}_{h-1}$, which implies $2(h-1)\geqslant d$. 
However, conditions $d\geqslant 2n$, $h(h+1)=4n$ and $h\geqslant 3$ yield $h^2< h(h+1)=4n\leqslant 2d$, so that $h<\sqrt{2d}$.
Hence $2(h-1)< d$ and $\Delta^{(0)}_{h-1}\cap\Delta^{(1)}_{h-1}= \emptyset$.
Thus we conclude that $p$ and $q$ satisfy (i)--(iii). 
\end{proof}
\end{lemma}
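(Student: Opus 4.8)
The plan is to construct $X$ explicitly in the coordinates of the excerpt, decoupling the local geometry at $q:=[1,0,\dots,0]$ from that at $p:=[0,1,0,\dots,0]$, and then to translate conditions (ii) and (iii) into statements about the base complete intersections $\Lambda^h_p,\Lambda^h_q\subset\mathbb{P}^{n-1}$, to which Proposition \ref{proposition:Predonzan+} applies. First I would write $F=\sum_{|I|=d}a_Iy^I$ and impose $a_{(d,0,\dots,0)}=a_{(0,d,0,\dots,0)}=0$ so that $p,q\in X$. As recalled in the excerpt, the equations of $V^h_q$ involve only the coefficients $a_I$ with $I\in\Delta^{(0)}_{h-1}$, and those of $V^h_p$ only the coefficients $a_I$ with $I\in\Delta^{(1)}_{h-1}$. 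I would then fix the degree--one part at $q$ by taking $G_1=y_{n+1}$; this makes $q$ a smooth point with $T_qX=V(y_{n+1})$, so that $V^h_q\subset\mathbb{P}^n=V(y_{n+1})$ is a cone of vertex $q$ over $\Lambda^h_q=V(y_0,y_{n+1},G_2,\dots,G_{h-1})\subset\mathbb{P}^{n-1}=V(y_0,y_{n+1})$, a complete intersection cut out by forms of degrees $2,3,\dots,h-1$.

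The key reduction is a dictionary for a cone $V^h_x$ of vertex $x$: projection from $x$ carries any line $\ell\subset V^h_x$ with $x\notin\ell$ to a line contained in the base $\Lambda^h_x$, and conversely a line in $\Lambda^h_x$ is already a line of $V^h_x$ missing $x$; moreover, away from the vertex, $V^h_x$ is smooth exactly where $\Lambda^h_x$ is. Since $p$ lies in $V(y_0,y_{n+1})=\mathbb{P}^{n-1}$, condition (iii) becomes the requirement that $\Lambda^h_q$ contain a line $\ell$ through $p$ along which $\Lambda^h_q$ is smooth (such $\ell$ automatically misses $q\notin V(y_0)$, hence differs from $\langle p,q\rangle$), while condition (ii) becomes the requirement that $\Lambda^h_p$ contain no line at all.

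Now I would invoke Proposition \ref{proposition:Predonzan+} with $m=n-1$, $s=h-2$ and $d_i=i+1$, so that $\Lambda^h$ is a complete intersection of type $(2,3,\dots,h-1)$; the hypothesis $h(h+1)=4n$ gives $t=\tfrac{h(h+1)}2-2n+1=1$, so the locus $W$ of such complete intersections containing a line is a nonempty irreducible divisor. For the $p$--side I would choose the coefficients $a_I$ with $I\in\Delta^{(1)}_{h-1}$, $I\neq(0,d,0,\dots,0)$, general: then $\Lambda^h_p$ is a smooth complete intersection lying off the codimension--one locus $W$, whence $X$ is smooth at $p$ and $\Lambda^h_p$ contains no line, giving (i) at $p$ and (ii). For the $q$--side I would instead fix once and for all a line $\bar\ell\subset\mathbb{P}^{n-1}$ through $p$ and choose the remaining coefficients $a_I$ with $I\in\Delta^{(0)}_{h-1}$ so that $G_2,\dots,G_{h-1}$ vanish on $\bar\ell$ (linear conditions) and so that the resulting complete intersection is smooth along $\bar\ell$. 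For $h\geqslant4$ one has $\dim\Lambda^h_q=n-h+1\geqslant2$ and this last smoothness is a Bertini statement on the blow--up of $\mathbb{P}^{n-1}$ along $\bar\ell$, exactly as in the proof of Proposition \ref{proposition:Predonzan+}; for $h=3$ (which forces $n=3$) one uses Remark \ref{remark:Quadrics} instead, $\Lambda^3_q$ being a reducible conic containing $\bar\ell$. This yields (iii).

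The whole construction hinges on being able to make the $p$-- and $q$--choices independently, i.e. on $\Delta^{(0)}_{h-1}\cap\Delta^{(1)}_{h-1}=\emptyset$, and this is the step I expect to be the crux. A common integral point would force $i_0,i_1\geqslant d-h+1$, hence $i_0+i_1\geqslant 2(d-h+1)$, which together with $i_0+i_1\leqslant d$ gives $2(h-1)\geqslant d$; so I must verify $2(h-1)<d$. This follows from $h(h+1)=4n\leqslant2d$ (using $d\geqslant2n$), which yields $d>\tfrac{h^2}2\geqslant2h-2$ because $(h-2)^2\geqslant0$. The only remaining delicate point is the meaning of ``smooth along $\ell$'' in the boundary case $h=3$, where the quadric cone $V^3_q$ degenerates to a pair of planes and is smooth only at the general point of $\ell$; for $h\geqslant4$ the Bertini argument gives smoothness along all of $\ell$.
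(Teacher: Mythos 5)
Your proposal is correct and follows essentially the same route as the paper: the same monomial--simplex decoupling of the data at $p$ and $q$, the same application of Proposition \ref{proposition:Predonzan+} (with $m=n-1$, $s=h-2$, $d_i=i+1$, giving $t=1$) together with Remark \ref{remark:Quadrics} for $h=3$, and the same disjointness inequality $2(h-1)<d$; fixing the line $\bar\ell$ through $p$ first and imposing vanishing conditions plus Bertini is just the fiberwise form of taking a general point of $W_{(2,\dots,h-1)}$, exactly as in the proof of that proposition. Your closing caveat about $h=3$ is well taken: there the cone over a reducible conic cannot be smooth along an entire line missing the vertex, so ``smooth along $\ell$'' must indeed be read generically in that boundary case --- a point the paper's own proof passes over silently.
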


\begin{lemma}\label{lemma:Z} Let $X\subset \mathbb{P}^{n+1}$ be a general hypersurface of degree $d\geqslant 2n$ and let $h\geqslant 3$ be an integer such that $h(h+1)\leqslant 4n$. 
\begin{itemize}
\item[(i)]  If $h(h+1)< 4n$, then $X^ h_1=X$ and if $x\in X$ is a general point, then $V^ h_x$ is a cone over a general complete intersection $\Lambda_{x}^h$ of type $\underline{\delta}= (1,1, 2,\ldots,h-1)$ in $\mathbb{P}^{n+1}$, which contains a family of lines of dimension $\theta$ given by \eqref{eq:inequality};
\item[(ii)]  If $h(h+1)= 4n$, then all components  of $X^ h_1$ have dimension $n-1$ and there is an irreducible component $Z$ of $X^ h_1$ such that, if $x\in Z$ is a general point, then $V^ h_x$ is a cone over a complete intersection $\Lambda_{x}^h$ of type $\underline{\delta}= (1,1, 2,\ldots,h-1)$ in $\mathbb{P}^{n+1}$, which contains a line and is parameterized by a general point of the locus $W_{\underline{\delta}} \subset  S^*_{\underline{\delta}}$ defined in Proposition \ref{proposition:Predonzan+}. 
\end{itemize}
\end{lemma}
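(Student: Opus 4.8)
The plan is to read off both statements from Proposition~\ref{proposition:Predonzan+} applied to the hyperplane section $\Lambda^h_x$, which by Lemma~\ref{lemma:LambdaGeneral} is, for $X$ and $x$ general, a general complete intersection of type $(2,3,\dots,h-1)$ in $\mathbb{P}^{n-1}$. First I would record the relevant numerics: taking $m=n-1$, $s=h-2$ and $d_i=i+1$ in \eqref{eq:inequality}, one computes $\sum_{i=1}^s d_i+s-2(m-1)=\frac{h(h+1)}{2}-2n+1$, so that $t=\max\{0,\frac{h(h+1)}{2}-2n+1\}$ and $\theta=\max\{0,2n-1-\frac{h(h+1)}{2}\}$. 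Since $h(h+1)$ is even, the hypothesis $h(h+1)<4n$ of (i) is equivalent to $t=0$ (with $\theta=2n-1-\frac{h(h+1)}{2}$), whereas $h(h+1)=4n$ in (ii) gives $t=1$ and $\theta=0$. The geometric translation I would use throughout is that a line $\bar\ell\subset\Lambda^h_x$ spans with $x$ a $2$--plane contained in $V^h_x$, so that $\Lambda^h_x$ contains a line if and only if $V^h_x$ contains a line off its vertex, i.e. if and only if $x\in X^h_1$; equivalently, $x\in X^h_1$ precisely when $[\Lambda^h_x]\in W_{\underline\delta}$.

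For (i) the assertion is then immediate: $t=0$ forces $W_{\underline\delta}=S^*_{\underline\delta}$ (or one invokes Remark~\ref{remark:Quadrics} when $h=3$, the only case in which Proposition~\ref{proposition:Predonzan+} does not directly apply), so the general complete intersection of this type---in particular $\Lambda^h_x$ for general $x$---carries a $\theta$--dimensional family of lines; each of them is a line of $V^h_x$ off the vertex, whence $X^h_1=X$ and $V^h_x$ has the stated structure. For (ii) the upper bound is equally quick: since $t=1>0$, the general such complete intersection contains no line, so by Lemma~\ref{lemma:LambdaGeneral} the general $x\in X$ has $[\Lambda^h_x]\notin W_{\underline\delta}$, i.e. $x\notin X^h_1$; hence $X^h_1\subsetneq X$ and every component has dimension $\le n-1$.

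The core of (ii) is to prove that $X^h_1$ is nonempty with every component of dimension exactly $n-1$. For a fixed general $X$ I would view the incidence $\Phi_X:=\{(x,[\ell]):x\in X,\ \ell\subset V^h_x,\ x\notin\ell\}$ as the zero locus of a section of a rank--$(2n-1)$ vector bundle over the $3n$--dimensional parameter space of pairs $(x,[\ell])$ with $x\in X$ and $x\notin\ell$: indeed $\ell\subset V^h_x$ amounts to the vanishing along $\ell$ of the Taylor forms $G_1,\dots,G_{h-1}$ of \eqref{eq:Gj}, which contribute $\sum_{k=1}^{h-1}(k+1)=\frac{h(h+1)}{2}-1=2n-1$ conditions. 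Consequently every component of $\Phi_X$ has dimension $\ge 3n-(2n-1)=n+1$. Since $\theta=0$, the projection $\Phi_X\to X^h_1$ has $2$--dimensional fibres (the lines off $x$ inside the unique plane $\langle x,\bar\ell\rangle\subset V^h_x$), so every component of $X^h_1$ has dimension $\ge n-1$, and together with the upper bound this forces dimension exactly $n-1$.

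What remains---and is the crux---is nonemptiness for the general $X$ and the selection of the distinguished component $Z$. Here I would pass to the universal incidence $\mathcal{J}:=\overline{\{(X,x,[\ell]):X\text{ smooth at }x,\ \ell\subset V^h_x,\ x\notin\ell\}}$ inside $|\mathcal{O}_{\mathbb{P}^{n+1}}(d)|\times\mathbb{P}^{n+1}\times\mathbb{G}(1,n+1)$. Fixing $(x,[\ell])$, the conditions $x\in X$ and $\ell\subset V^h_x$ are $\frac{h(h+1)}{2}=2n$ conditions, linear in the coefficients of $F$, so each fibre of $\mathcal{J}\to\{(x,[\ell])\}$ is a linear system; thus $\mathcal{J}$ is irreducible of dimension $\dim|\mathcal{O}_{\mathbb{P}^{n+1}}(d)|+n+1$. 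Lemma~\ref{lemma:Example} then supplies the dominance of $\mathcal{J}\to|\mathcal{O}_{\mathbb{P}^{n+1}}(d)|$: it produces a hypersurface $X_0$ and a point $q\in X^h_1(X_0)$ at which $V^h_q$ is smooth along a line $\ell$ with $q\notin\ell$, so that the fibre of $\mathcal{J}$ over $X_0$ has the expected dimension $n+1$ at $(X_0,q,[\ell])$; by upper semicontinuity of fibre dimension the general fibre has dimension $\le n+1$, and comparing with $\dim\mathcal{J}$ shows that the projection dominates with general fibre exactly $\Phi_X$ of dimension $n+1$. The component of $\mathcal{J}$ dominating the space of hypersurfaces then cuts out, over the general $X$, a component $Z\subset X^h_1$ whose generic point $x$ has $[\Lambda^h_x]$ a general point of $W_{\underline\delta}$, the smoothness of $V^h_q$ along $\ell$ in Lemma~\ref{lemma:Example}(iii) being exactly this genericity. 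I expect the main obstacle to be precisely this transfer of genericity from the special---and only partially smooth---hypersurface of Lemma~\ref{lemma:Example} to a general smooth $X$: one must verify that $(X_0,q,[\ell])$ lies on the dominant component of $\mathcal{J}$ and that the $2n$ linear conditions are independent there, which is what the transversality in Lemma~\ref{lemma:Example}(iii) is tailored to guarantee.
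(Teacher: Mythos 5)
Your treatment of part (i), of the inclusion $X^h_1\subsetneq X$ in part (ii), and your dimension count for the incidence variety $\mathcal{J}$ are sound, and your overall strategy (use Lemma \ref{lemma:Example} to force existence of $X^h_1$ on the general hypersurface) is the same as the paper's; but two steps in the core of (ii) have genuine gaps. First, the claim that $\Phi_X\to X^h_1$ has $2$--dimensional fibres, hence that \emph{every} component of $X^h_1$ has dimension at least $n-1$, is unjustified: $\theta=0$ (equivalently, the uniqueness of the line in Proposition \ref{proposition:Predonzan+}) is a statement about the \emph{general} point of $W_{\underline{\delta}}$ only. At the generic point $x$ of a putative small component of $X^h_1$, the section $\Lambda^h_x$ is a special member of $W_{\underline{\delta}}$ and may carry a positive--dimensional family of lines, in which case the fibre of $\Phi_X$ over $x$ has dimension $2+\dim F(\Lambda^h_x)>2$, and your lower bound $n+1$ on components of $\Phi_X$ then says nothing about the dimension of their images. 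The paper gets ``all components have dimension $n-1$'' by a different mechanism which is immune to this: the closure of the codimension--one locus $W$ in the affine parameter space is a hypersurface, so $X^h_1$ is locally the zero set of a \emph{single} equation (the pullback of the equation of $\overline{W}$ under $x\mapsto[\Lambda^h_x]$), and pure codimension one is automatic.

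Second, your dominance argument for $\mathcal{J}\to|\mathcal{O}_{\mathbb{P}^{n+1}}(d)|$ rests on the assertion that the fibre over $X_0$ has local dimension $n+1$ at $(X_0,q,[\ell])$ ``because $V^h_q$ is smooth along $\ell$''. Smoothness of the cone along the line does not make the line rigid (lines on a smooth quadric surface move in a one--dimensional family), so it does not bound the fibre of $\Phi_{X_0}\to X_0$ over $q$ by $2$; and even granting that bound, to deduce $\dim_{(q,[\ell])}\Phi_{X_0}\leqslant (n-1)+2=n+1$ you must also know that no component of $\Phi_{X_0}$ through $(q,[\ell])$ dominates $X_0$ --- which is exactly what Lemma \ref{lemma:Example}(ii), namely $p\notin X^h_1$, provides, and which your sketch never invokes at this point. (This, not the independence of the $2n$ linear conditions, which is automatic by homogeneity, is the real crux you should have flagged.) The repair requires reaching into the \emph{proof} of Lemma \ref{lemma:Example}, where $\Lambda^h_q$ can be chosen to be a general point of $W_{(2,\ldots,h-1)}$ and hence contains a unique line by Proposition \ref{proposition:Predonzan+}. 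The paper instead avoids any local analysis at the special hypersurface via a dichotomy: the preimage of $W$ under the universal map $(x,F)\mapsto[\Lambda^h_x]$ has pure codimension one, so if its image in $S^*_d$ were not dense that image would have codimension one and then every smooth point of every hypersurface it parameterizes would lie in $X^h_1$; Lemma \ref{lemma:Example}, producing one hypersurface with a point inside and a point outside $X^h_1$, gives the contradiction. You should either import that dichotomy or strengthen your appeal to Lemma \ref{lemma:Example} as indicated.
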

\begin{proof}
Let
$$
\mathcal X := \left\{(x,F) \left|  F\in S^*_d \text{ and } x\in X_F\right. \right\} \subset \mathbb{P}^{n+1}\times S^*_d 
$$
be the universal hypersurface of degree $d$, and let 
$$ 
\mathbb{P}^{n+1} \stackrel{\tau}{\longleftarrow} \mathcal X \stackrel{\sigma}{\longrightarrow} S^*_d 
$$
be the projections onto the two factors. 
Let $H \subset \mathbb{P}^{n+1}$ be a general hyperplane and let 
$$
U := \mathcal X \cap \left( (\mathbb{P}^{n+1} \setminus H)\times S^*_d \right).$$
Without loss of generality, we may assume $H=V(y_{n+1})$.
Then we set $\underline{\alpha} := (1, 2,\ldots,h-1)$ and we consider the rational map 
$$
\begin{array}{rrcl}
\varphi\colon &U & {\dasharrow} & S^*_{\underline{\alpha}}  \\
& (x,F) & \longrightarrow & \displaystyle \left(G_1(y_0,\dots,y_{n},0),\dots,G_{h-1}(y_0,\dots,y_{n},0)\right),
\end{array}
$$
where $G_k(y_0,\dots,y_{n+1})$ is a polynomial of degree $k$ defined as in \eqref{eq:Gj}, $1 \leqslant k \leqslant h-1$, and the collection of these $(h-1)$--polynomials  provides the defining equations of the cone $V^h_x$ at $x\in X_F$. 
Therefore, $\varphi(x,F)$ gives the defining equations of $\Lambda_x^h=V_x^h \cap H$ in $H\cong \mathbb{P}^n$ and, if we fix the coordinates of $x\in X_F$ and the equation of the tangent space $T_xX_F$, then $\varphi$ is just the map $\zeta$ in \eqref{eq:zeta}.
Thanks to Lemma \ref{lemma:LambdaGeneral}, the map $\varphi$ is dominant. 
As in Proposition \ref{proposition:Predonzan+}, we consider the locus $W_{\underline{\alpha}} \subseteq S^*_{\underline{\alpha}}$ which parameterizes complete intersections containing a line.

If $h(h+1) < 4n$, then $W_{\underline{\alpha}} = S^*_{\underline{\alpha}}$ and the general point of $S^*_{\underline{\alpha}}$ parameterizes a smooth complete intersection containing a $\theta$--dimensional family of lines. 
Indeed, if $h\geqslant 4$, this fact follows from Proposition \ref{proposition:Predonzan+} applied to $H \cong \mathbb{P}^n$, with $s= h-1$, $d_i = i$ and $1 \leqslant i \leqslant h-1$.
If instead $h=3$, then $n\geqslant 4$ and $V^3_x$ is a cone over a quadric of dimension at least 2, as in Remark \ref{remark:Quadrics}.   
Thus, in both cases the general $(x,F) \in U$, and hence the general $(x,F) \in \mathcal X$, is such that $\varphi(x,F)\in S^*_{\underline{\alpha}}$ parameterizes a general complete intersection $\Lambda_{x}^h$ of type $\underline{\alpha}$ in $H$ containing a family of lines of dimension $\theta$.
 Moreover, the generality of the hyperplane $H$ ensures that $\Lambda_{x}^h$ is a general complete intersection of type $\underline{\delta}= (1,1, 2,\ldots,h-1)$ in $\mathbb{P}^{n+1}$, as in (i).

If $h(h+1) = 4n$,  Proposition \ref{proposition:Predonzan+} (and Remark \ref{remark:Quadrics} if $h=n=3$) yields $\codim_{S_{\underline{\alpha}}} (W_{\underline{\alpha}}) =1$, so that $\varphi^{-1} (W_{\underline{\alpha}})$ has codimension $1$ in $U$. 
Therefore, proving (ii) is equivalent to show that $\sigma\left(\varphi^{-1} (W_{\underline{\alpha}})\right)$ is dense in $S^*_d$.
We point out that if $\sigma\left(\varphi^{-1} (W_{\underline{\alpha}})\right)\subset S^*_d$ had codimension 1, then for any $F\in \sigma\left(\varphi^{-1} (W_{\underline{\alpha}})\right)$ and any smooth point $x\in X_F$, we would have $(x,F)\in \varphi^{-1} (W_{\underline{\alpha}})$. 
However, Lemma \ref{lemma:Example} assures the existence of a polynomial $F\in S^*_d$ and two smooth points $p,q\in X_F$ such that $(p,F)\not\in \varphi^{-1} (W_{\underline{\alpha}})$ and $(q,F)\in\varphi^{-1} (W_{\underline{\alpha}})$ (where, setting $\ell$ as in Lemma \ref{lemma:Example}(iii), a line $L\subset \Lambda_q^h$ is obtained by intersecting $V_q^h$ and the plane $\langle q,\ell\rangle$). 
Thus $\sigma(\varphi^{-1} (W_{\underline{\delta}}))$ is dense in $S^*_d$ and (ii) follows.
\end{proof}

\begin{remark}\label{remark:non-degenerate}
We point out that if $h\geqslant 3$ is an integer such that $h(h+1)= 4n$ as in Lemma \ref{lemma:Z}(ii), then a general hypersurface $X\subset \mathbb{P}^{n+1}$ of degree $d\geqslant 2n$ admits two points $p,q\in X$ as in Lemma \ref{lemma:Example}, i.e. $p\not\in X_1^h$, whereas $q\in X^ h_1$ and $V^h_{q}$ contains a line $\ell$ such that $q\not\in\ell$, $p\in \ell$ and $V^h_{q}$ is smooth along $\ell$.\\
To see this fact, we set $q\in Z$ to be a general point of the component $Z\subset X_1^h$ in Lemma \ref{lemma:Z}(ii), so that $V_q^h$ contains a line $\ell$ not passing through $q$, along which $V_q^h$ is smooth.
Then, we choose $p$ in $X\cap \ell$ and, up to projective transformation, we may assume that $q=[1,0,\dots,0], p=[0,1,\dots, 0] \in \mathbb{P}^{n+1}$, as in the proof of Lemma \ref{lemma:Example}.
Using the notation and the argument therein, we have that the coefficients $a_I$ governing the geometry of the cone $V_p^h$---i.e. those with $I\in \Delta^{(1)}_{h-1}$---are not affected by the geometry of $V_q^h$ and, in particular, by the condition $p\in X\cap \ell$.
Therefore, the generality of $X$ yields that the coefficients $a_I$ with $I\in \Delta^{(1)}_{h-1}$ are sufficiently general for having $p\not\in X_1^h$.
\end{remark}

We can now prove the upper bound in Theorem \ref{theorem:Main}.

\begin{theorem}\label{theorem:UpperBound}
Let $X\subset \mathbb{P}^{n+1}$ be a very general hypersurface of degree $d\geqslant 2n$.
Given a general point $x\in X$, there exists a plane curve $C\subset X$ passing through $x$, and having a point of multiplicity at least $\left\lfloor\frac{\sqrt{16n+1}-1}{2}\right\rfloor$ and gonality $\gon(C)\leqslant d-\left\lfloor\frac{\sqrt{16n+1}-1}{2}\right\rfloor$. 
In particular,
\begin{equation*}
\covgon(X)\leqslant d-\left\lfloor\frac{\sqrt{16n+1}-1}{2}\right\rfloor.
\end{equation*}

\begin{proof}
The case $n=1$ is trivial, since the projection from any point of the smooth curve $X\subset \mathbb{P}^2$ is indeed a $(d-1)$--gonal map, while the case $n=2$ is covered by \cite[Corollary 1.8]{LP}. 
So we assume $n\geqslant 3$. We set
$$
h:=\left\lfloor\frac{\sqrt{16n+1}-1}{2}\right\rfloor
$$
and we note that this is the maximal integer such that $h(h+1)\leqslant 4n$. 
We will prove the existence of a family of plane curves covering $X$ and having gonality at most $d-h$.

Thanks to Lemma \ref{lemma:Z}, the hypersurface $X$ contains an irreducible component $Z$ of $X^ h_1$ such that for the general $z \in Z$, the cone $V_z^h$ contains some line $\ell$ not passing through $z$, and for each such line $\ell$ we can consider the plane $\pi_{z,\ell}:=\langle z,\ell \rangle\subset V_z^h$. 
To ease the notation we set $\pi:=\pi_{z,\ell}$.
Note that $X$ does not contain rational curves (see \cite {V, V2}).  
Hence $\pi\not\subset X$ and $\pi$ cuts out on $X$ a curve $C:=C_{z,\ell}$ passing through $z$, which is possibly reducible, but none of its component is rational (in particular, it is not a line). 
Since $\pi\subset V^h_{q}$, any line $L\subset \pi$ passing through $z$ is such that the intersection cycle  $L\cdot X$ is of the form $L\cdot X=h\,z+x_1+\dots+x_{d-h}$, for some $ x_1,\dots,x_{d-h}\in C$.
Therefore, if $\widetilde C$ is the normalization of $C$, the projection from $z$ induces a morphism $\widetilde C\longrightarrow \ell\cong \mathbb{P}^1$, which is non--constant on any component of $C$ and has degree at most $d-h$.
In particular, the gonality of any irreducible component $\Gamma$ of $\widetilde C$ satisfies $\gon(\Gamma)\leqslant d-h$.

Actually, it shall follow from Theorem \ref{theorem:LowerBound} and Lemma \ref{lemma:Equality} that for general $z\in Z$, any such a component $\Gamma$ satisfies $\gon(\Gamma)\geqslant d-h-1$, and hence the general $C:=C_{z,\ell}\subset \pi$ shall turn out to be an irreducible plane curve with gonality $\gon(C)\leqslant d-h$.
Moreover, any line $L\subset \pi$ passing through $z$ meets $C$ at $z$ with multiplicity at least $h$, so that $C$ has a singular point of multiplicity at least $h$ at $z$.

To finish the proof we need to show that the curves $C_{z,\ell}$ cover an open subset of  $X$.
If $h(h+1)<4n$, then $Z=X$ by Lemma \ref{lemma:Z}, and the curves $C_{z,\ell}$ cover $X$.

If  $h(h+1)=4n$, then $Z$ has codimension 1 in $X$ by Lemma \ref{lemma:Z}.
In this case, proving that the curves $C_{z,\ell}$ cover  $X$ is equivalent to prove that (the closure of) the family of planes of the form $\pi_{z,\ell}$ as above is \emph{non--degenerate}, i.e. it sweeps out the whole projective space $\mathbb{P}^{n+1}$.

According to Remark \ref{remark:non-degenerate}, the hypersurface $X$ admits two smooth points $p,q\in X$ such that $p\not\in X^ h_1$, $q\in Z$, and $p\in \pi_{q,L}$, where $L\subset V_q^h$ is a line through $p$ not containing $q$. 
Assume by contradiction that the family $\mathcal P$ of planes  $\pi_{z,\ell}$ is degenerate, and let $\Pi$ be the proper subvariety of $ \mathbb{P}^{n+1}$ which is the union of the planes of $\mathcal P$. To simplify the argument, we assume that $\mathcal P$, and hence $\Pi$, is irreducible (the general case can be treated similarly by replacing $\mathcal P$ with each of its irreducible components). 

Since $Z\subset \Pi$, then 
$$n\geqslant \dim(\Pi)\geqslant \dim\left(\Pi\cap X\right)\geqslant \dim(Z)= n-1.$$ 
Note that $\Pi$  is not contained in $X$ because
$X$ contains no plane.    
Therefore $\dim(\Pi)>\dim(Z)$, hence
$\dim(\Pi)=n$ and $\Pi$ and $X$ intersect along a pure $(n-1)$-dimensional variety containing $Z$ as a component, and also along some other irreducible component $Y$ passing through $p\in X\smallsetminus Z$. 

By \eqref{eq:proposition3.8} one has   
\begin{equation}\label{eq:Example1}
\covgon(Z)+\covgon(Y)\geqslant 2(d-n-1). 
\end{equation}
On the other hand, the intersection $\Pi\cap X$ is covered by the curves $C_{z,\ell}$, and the sum of the gonalities of their irreducible components is at most $d-h$.
Hence
\begin{equation}\label{eq:Example2}
\covgon(Z)+\covgon(Y)\leqslant d-h. 
\end{equation}
By combining \eqref{eq:Example1} and \eqref{eq:Example2}, we deduce $d-2n-2+h\leqslant 0$, which is impossible for $d\geqslant 2n$ and $h\geqslant 3$.
Thus we reach a contradiction and we conclude that $\Pi$ does coincide with $\mathbb{P}^{n+1}$, as wanted.
\end{proof}
\end{theorem}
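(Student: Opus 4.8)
The plan is to produce, through a general point of $X$, a plane curve carrying a point of high multiplicity, and then to project from that point in order to bound the gonality. First I would dispose of the low--dimensional cases: for $n=1$ the projection of the smooth plane curve $X\subset\mathbb{P}^2$ from one of its own points is already a $(d-1)$--gonal map, while $n=2$ is exactly the surface statement \cite[Corollary 1.8]{LP}. So I may assume $n\geqslant 3$ and set $h:=\left\lfloor\frac{\sqrt{16n+1}-1}{2}\right\rfloor$, which is precisely the largest integer satisfying $h(h+1)\leqslant 4n$; in this range $h\geqslant 3$, so that Lemma \ref{lemma:Z} is applicable.

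By Lemma \ref{lemma:Z}, $X$ carries an irreducible component $Z\subseteq X^h_1$ whose general point $z$ admits a line $\ell\subset V^h_z$ not passing through $z$. For each such pair I would form the plane $\pi:=\langle z,\ell\rangle$, which lies in $V^h_z$ by construction. Since a very general $X$ of degree $d\geqslant 2n$ contains no rational curve, hence no line and no plane (see \cite{V,V2}), the plane $\pi$ is not contained in $X$ and therefore cuts out a curve $C=C_{z,\ell}\subset X$ through $z$, possibly reducible but with no rational component.

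The gonality estimate is then immediate. Because $\pi\subset V^h_z$, every line $L\subset\pi$ through $z$ meets $X$ in the cycle $L\cdot X=h\,z+x_1+\dots+x_{d-h}$, so projection from $z$ defines a morphism $\widetilde C\to\ell\cong\mathbb{P}^1$ that is non--constant on each component and of degree at most $d-h$. Thus every component $\Gamma$ of $\widetilde C$ satisfies $\gon(\Gamma)\leqslant d-h$, and the plane curve $C$ acquires a point of multiplicity at least $h$ at $z$. The component of $C$ through a general point of $X$ then witnesses $\covgon(X)\leqslant d-h$, provided the curves $C_{z,\ell}$ sweep out a dense subset of $X$.

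The covering property is where the genuine difficulty resides, and I expect it to be the main obstacle. When $h(h+1)<4n$, Lemma \ref{lemma:Z}(i) gives $Z=X$, and the curves $C_{z,\ell}$ cover $X$ automatically. The delicate case is the boundary $h(h+1)=4n$, where $Z$ has codimension one in $X$; here covering $X$ is equivalent to the family $\mathcal{P}$ of planes $\pi_{z,\ell}$ being \emph{non--degenerate}, i.e. its union $\Pi$ being all of $\mathbb{P}^{n+1}$. I would argue by contradiction: if $\Pi\subsetneq\mathbb{P}^{n+1}$, then since $Z\subset\Pi$ and $X$ contains no plane, one is forced to $\dim\Pi=n$, with $\Pi\cap X$ pure of dimension $n-1$, having $Z$ as one component and a further component $Y$. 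The crux is to ensure that $Y$ is genuinely distinct from $Z$, and for this I would invoke Remark \ref{remark:non-degenerate} (resting on Lemma \ref{lemma:Example}), which supplies points $p\notin X^h_1$ and $q\in Z$ with $p$ lying on a line of $V^h_q$, so that $Y$ must pass through $p\in X\smallsetminus Z$. Finally I would play the lower bound $\covgon(\cdot)\geqslant d-2n+s$ of \eqref{eq:proposition3.8} against the upper bound from the covering curves: the former yields $\covgon(Z)+\covgon(Y)\geqslant 2(d-n-1)$, whereas the curves $C_{z,\ell}$ covering $\Pi\cap X$ give $\covgon(Z)+\covgon(Y)\leqslant d-h$. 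Combining these forces $d-2n-2+h\leqslant 0$, which is impossible for $d\geqslant 2n$ and $h\geqslant 3$. Hence $\Pi=\mathbb{P}^{n+1}$, the plane curves cover $X$, and the assertion follows.
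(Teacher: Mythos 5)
Your proposal is correct and follows essentially the same path as the paper's own proof: the same reduction to $n\geqslant 3$, the same use of Lemma \ref{lemma:Z} and the planes $\pi_{z,\ell}$ with projection from $z$, and the same contradiction argument via Remark \ref{remark:non-degenerate} and \eqref{eq:proposition3.8} in the boundary case $h(h+1)=4n$. The only detail the paper adds that you omit is the forward reference to Theorem \ref{theorem:LowerBound} and Lemma \ref{lemma:Equality} showing that the general $C_{z,\ell}$ is in fact irreducible, which is needed for the precise statement that a single plane curve carries both the multiplicity-$h$ point and the gonality bound.
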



\subsection{Covering families of curves with low gonality and high tangency cones}\label{section:CoveringFamiliesAndCones}

Let $X$ be an irreducible complex projective variety of dimension $n$.
\begin{definition} 
A \emph{covering family of} $c$\emph{--gonal curves} consists of a smooth family $\mathcal{C}\stackrel{\pi}{\longrightarrow} T$ of irreducible curves endowed with a dominant morphism $f\colon \mathcal{C}\longrightarrow X$ such that for general $t\in T$, the fibre  $C_t:=\pi^{-1}(t)$ is a smooth curve with gonality $\gon(C_t)=c$ and the restriction $f_{t}\colon C_t\longrightarrow X$ is birational onto its image. 
\end{definition}
The \emph{covering gonality} of $X$ is the least integer $c>0$ such that there exists a covering family of $c$--gonal curves.
According to \cite[Remark 1.5]{BDELU}, we may assume that both $T$ and $\mathcal{C}$ are smooth, with $\dim(T)=n-1$.
Furthermore, up to base change, we may consider a commutative diagram
\begin{equation*}\label{diagram:Gonality}
\xymatrix{\mathcal{C} \ar[dr]_-{\pi} \ar[r]^-\varphi & T\times \mathbb{P}^1 \ar[d]^-{\mathrm{pr_1}} \\ & T,\\}
\end{equation*}
where the restriction $\varphi_t\colon C_t \longrightarrow \{t\}\times\mathbb{P}^1  \cong \mathbb{P}^1$ is a $c$--gonal map. 
If $y\in \mathbb{P}^1$ is a general point, we set $\varphi_t^{-1}(t,y)=\left\{q_1,\ldots,q_c\right\}\subset C_t$.
We can argue as in \cite[Example 4.7]{B} to construct a correspondence $\Gamma\subset X\times \left(T\times \mathbb{P}^1\right)$ of degree $c$ with null trace (cf. \cite[Section 4]{B}).
Then \cite[Theorem 2.5]{BCD} implies the following.
\begin{proposition}\label{proposition:BCD}
Let $X\subset \mathbb{P}^{n+1}$ be a smooth hypersurface of degree $d\geqslant n+3$, and let $\mathcal{C}\stackrel{\pi}{\longrightarrow} T$ be a covering family of $c$--gonal curves,  as above. 
If $c\leqslant 2d-2n-3$, then  $f(q_1),\ldots,f(q_c)\in X$ are contained on a line $\ell_{(t,y)}\subset \mathbb{P}^{n+1}$.  
\end{proposition}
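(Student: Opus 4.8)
The plan is to exhibit $\{f(q_1),\ldots,f(q_c)\}$ as the general fibre of a correspondence with null trace on $X$, and then to read off collinearity from the Cayley--Bacharach mechanism encoded in \cite[Theorem 2.5]{BCD}.

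First I would set up the correspondence. For general $(t,y)\in T\times\mathbb{P}^1$ the fibre $\varphi_t^{-1}(t,y)=\{q_1,\ldots,q_c\}$ consists of $c$ distinct points, since $\varphi_t$ is a degree--$c$ morphism and hence has reduced general fibre; and as $f_t$ is birational onto its image, the points $f(q_1),\ldots,f(q_c)\in X$ are again distinct for general $(t,y)$. Writing $B:=T\times\mathbb{P}^1$ (a smooth variety of dimension $n$, after the reductions recalled above), the morphism $(f,\varphi)\colon\mathcal{C}\to X\times B$ is generically injective, hence birational onto an $n$--dimensional subvariety $\Gamma$ whose second projection $p_B\colon\Gamma\to B$ is generically finite of degree $c$ and whose first projection $p_X\colon\Gamma\to X$ is dominant, because the family covers $X$. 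Thus $\Gamma$ is a degree--$c$ correspondence between $X$ and $B$, with $p_B^{-1}(t,y)=\{f(q_1),\ldots,f(q_c)\}$ for general $(t,y)$.

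Next I would verify, following \cite[Example 4.7]{B}, that $\Gamma$ has null trace. Since $B$ carries the $\mathbb{P}^1$--factor one has $\omega_B\cong \mathrm{pr}_T^*\omega_T\otimes\mathrm{pr}_{\mathbb{P}^1}^*\mathcal{O}_{\mathbb{P}^1}(-2)$, whence $H^0(B,\omega_B)=0$ by K\"unneth; consequently the trace homomorphism $\mathrm{Tr}_\Gamma\colon H^0(X,\omega_X)\to H^0(B,\omega_B)$ attached to the correspondence is forced to vanish. Here the hypothesis $d\geqslant n+3$ is essential: by adjunction $\omega_X\cong\mathcal{O}_X(d-n-2)$ with $d-n-2\geqslant 1$, and $H^0(X,\omega_X)\cong H^0(\mathbb{P}^{n+1},\mathcal{O}(d-n-2))$, so the canonical system is cut out by the \emph{full} linear system of degree--$(d-n-2)$ hypersurfaces and the null--trace condition becomes a genuine constraint on $\{f(q_1),\ldots,f(q_c)\}$.

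Finally I would invoke \cite[Theorem 2.5]{BCD}, which converts the null--trace property into a Cayley--Bacharach property for the general fibre with respect to $|\mathcal{O}_{\mathbb{P}^{n+1}}(d-n-2)|$: any hypersurface of degree $d-n-2$ through all but one of the $f(q_i)$ passes through the remaining one as well. The numerology fits exactly, since the collinearity threshold for the Cayley--Bacharach property relative to forms of degree $m$ is $2m+1$, and here $2(d-n-2)+1=2d-2n-3\geqslant c$. Hence the points $f(q_1),\ldots,f(q_c)$ lie on a single line $\ell_{(t,y)}\subset\mathbb{P}^{n+1}$, as required. I expect the substantive step to be the implication ``null trace $\Rightarrow$ collinearity'', which is precisely the content of \cite[Theorem 2.5]{BCD} and which I would simply cite; were one to reprove it, the crux would be the Cayley--Bacharach collinearity statement -- that a reduced zero--cycle of length at most $2m+1$ satisfying $\mathrm{CB}(m)$ in projective space spans only a line -- together with the checks that the general fibre is reduced and sufficiently general for the theorem to apply. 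By contrast, the construction of $\Gamma$ and the vanishing of its trace are formal once the covering family has been put in the normalized shape of the diagram above.
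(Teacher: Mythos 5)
Your proof is correct and follows essentially the same route as the paper: the paper likewise constructs the degree-$c$ correspondence $\Gamma\subset X\times\left(T\times\mathbb{P}^1\right)$ as in \cite[Example 4.7]{B}, observes that it has null trace (precisely because $H^0\left(T\times\mathbb{P}^1,\omega_{T\times\mathbb{P}^1}\right)=0$, forced by the $\mathbb{P}^1$--factor), and then concludes by citing \cite[Theorem 2.5]{BCD}. The only difference is one of detail: you spell out the K\"unneth vanishing, the role of $d\geqslant n+3$ in making $\omega_X\cong\mathcal{O}_X(d-n-2)$ effective, and the Cayley--Bacharach numerology $2(d-n-2)+1=2d-2n-3\geqslant c$, all of which the paper delegates to the two citations.
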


For a general point $x\in X$, there exists a line $\ell_{(t,y)}\subset \mathbb{P}^{n+1}$ passing through $x$.
Moreover, since $X$ is smooth of degree $d\geqslant n+3$, then $X$ is of general type, hence it is not covered by lines, so that $\ell_{(t,y)}$ meets $X$ along a $0$--dimensional scheme of length $d=\deg X$.  
As we vary $(t,y)\in T\times\mathbb{P}^1$, the lines $\ell_{(t,y)}$ describe a subvariety $B_0\subset \mathbb{G}(1,n+1)$ of dimension $n$.
By taking a desingularization $B\longrightarrow B_0$, we have a commutative diagram
\begin{equation}\label{diagram:Congruence}
\xymatrix{\mathfrak{P} \ar[d]_-\phi \ar[r] \ar@/^1pc/[rr]^-\mu &\mathfrak{P}_0 \ar[d] \ar[r] & \mathbb{P}^{n+1} \\ B \ar[r]& B_0 & \\}
\end{equation}
where $\mathfrak{P}\stackrel{\phi}{\longrightarrow} B$ is the $\mathbb{P}^1$--bundle obtained as the pullback of the universal $\mathbb P^ 1$--bundle on $\mathbb{G}(1,n+1)$, and $\mu\colon \mathfrak{P}\longrightarrow \mathbb{P}^{n+1}$ is the obvious morphism, which is clearly dominant.

Finally, by arguing similarly to \cite[Theorem C]{BDELU}, we prove the following result.

\begin{proposition}\label{proposition:Cone}
Let $X\subset \mathbb{P}^{n+1}$ be a very general hypersurface of degree $d\geqslant 2n+2$. 
Consider a covering family of $c$--gonal curves  as above with $c\leqslant d-3$. 
Then: 
\begin{itemize}
  \item[(i)] there exists a point $x_t\in f(C_t)$ such that $f(C_t) \subset V_{x_t}^{d-c}\cap X$;
  \item[(ii)] the $c$--gonal map $\varphi_t: C_t\longrightarrow \{t\}\times\mathbb{P}^1\cong\mathbb{P}^1$ is the composition of $f_t$ with the projection from $x_t$. 
\end{itemize}
In particular, the image of $f(C_t)$ under the projection from $x_t$ is a rational curve $R_t\subset \Lambda_{x_t}^{d-c}$.
\begin{proof}
Since $d\geqslant 2n+2$, we deduce that $c\leqslant d-3\leqslant 2d-2n-3$.
Hence by Proposition \ref{proposition:BCD} there exists a line $\ell_{(t,y)}$ containing $f\left(\varphi_t^{-1}(t,y)\right)=\left\{f(q_1),\ldots,f(q_c)\right\}\subset f(C_t)$. 
We will prove that there exists a point $x_t\in f (C_t)$ depending only on $t$ such that $\ell_{(t,y)}\cdot X = (d-c)x_t+f(q_1)+\dots+f(q_c)$.

We argue by contradiction and we assume that, as $(t,y)\in \{t\}\times\mathbb{P}^1$ varies, the $0$--cycle \linebreak $(\ell_{(t,y)}\cdot X)-f(q_1)-\dots-f(q_c)$ moves describing a (possibly reducible) curve $D_t\subset X$.
We note that $D_t$ is dominated by the curve $\displaystyle E_t:=\overline{\left\{\left.(x,y)\in D_t\times \mathbb{P}^1\right| x\in \ell_{(t,y)}\right\}}$ under the first projection, whereas the second projection has degree $d-c$. Thus any irreducible component of $E_t$ has gonality at most $d-c$, and hence any irreducible component $D_t^{\prime}$ of $D_t$ satisfies $\gon(D_t^{\prime})\leqslant d-c$.
Moreover, since $c\geqslant \covgon(X)\geqslant d-n$ by \eqref{eq:theoremA} and $d\geqslant 2n+2$, we deduce $\covgon(X) >d-c$, so that $\covgon(X)>\gon(D_t^{\prime})$. 
Thus the closure of the locus swept out by the curves $D_t$ is a proper subvariety of $X$.

Let $S$ be an irreducible component of such a subvariety and let $1\leqslant s\leqslant n-1$ be its dimension. 
By \eqref{eq:proposition3.8}, one has
\begin{equation}\label{equation:CovGonSlower}
\covgon(S)\geqslant d-2n+s. 
\end{equation}
On the other hand, let us consider the family $\mathfrak{P}\stackrel{\phi}{\longrightarrow} B$ in \eqref{diagram:Congruence}. 
By construction, the general line $\ell_{(t,y)}$ intersects $S$.
Moreover, since $s\leqslant n-1$ and $\dim (B)=n$, if $x\in S$ is a general point, there is a family of dimension $n-s>0$ of lines of the original family passing through $x$.
Let us denote by $R\subset \mathfrak{P}$ the ramification divisor of the generically finite morphism $\mu\colon \mathfrak{P}\longrightarrow \mathbb{P}^{n+1}$  in \eqref{diagram:Congruence}.
Thus there exists an irreducible component $Z$ of ${R}$ such that $\mu(Z)=S$ and the restriction $\phi_{|Z}\colon Z\longrightarrow B$ is dominant.

Setting $e:=\deg \phi_{|Z}$, we claim that $\covgon(S)\leqslant e$.
Indeed, if we vary $(t,y)\in \{t\}\times\mathbb{P}^1$, the lines $\ell_{(t,y)}$ describe a rational curve $Q_t\subset B$
and the inverse image $\phi^{-1}(Q_t)$ intersects $Z$ along a curve $G_t$ which dominates $D_t$ by construction.
Since $Q_t$ is rational, we deduce
\begin{equation}\label{equation:CovGonSupper1}
\displaystyle \covgon(S)\leqslant \gon (D_t^{\prime})\leqslant \deg\left(\phi_{|G_t}\colon G_t\longrightarrow Q_t\right)\leqslant e.
\end{equation}

Now, we recall that for general $\left[\ell\right]\in B$, the fibre $L:=\phi^{-1}\left(\left[\ell\right]\right)$ satisfies $\left(L\cdot R\right)=n$ (see e.g. \cite[Proposition 1]{D1}), and the contribution of $Z$ to this intersection product is $e\cdot 
\ord_Z(R)$, where $\ord_Z(R)$ is the multiplicity of $Z$ in $R$.
By \cite[Corollary A.6]{BDELU}, one has $\ord_Z(R)\geqslant n-s$.
Therefore $e(n-s)\leqslant n$, and  \eqref{equation:CovGonSupper1} yields
\begin{equation}\label{equation:CovGonSupper2}
\covgon(S)\leqslant \frac{n}{n-s}. 
\end{equation}

Finally, by pooling \eqref{equation:CovGonSlower}, \eqref{equation:CovGonSupper2} and the assumption $d\geqslant 2n+2$, we obtain
$$
s+2\leqslant \frac{n}{n-s}=\frac{s}{n-s}+1,
$$ 
which fails for $1\leqslant s\leqslant n-1$.
Hence we get a contradiction, so that for general $(t,y)\in \{t\}\times\mathbb{P}^1$, the $0$--cycle $(\ell_{(t,y)}\cdot X)-f(q_1)-\dots-f(q_c)$ is supported at a point $x_t\in X$, which depends only on $t$.

Therefore, denoting by $\Sigma_t$ the closure of the surface swept out by the lines $\ell_{(t,y)}$ with ${(t,y)\in \{t\}\times\mathbb{P}^1}$, the intersection $X\cap \Sigma_t$ is a curve entirely supported on $f(C_t)$, so that $x_t\in f(C_t)$. 
In particular, we have that $\ell_{(t,y)}\cdot X = (d-c)x_t+f(q_1)+\dots+f(q_c)$, i.e. $\ell_{(t,y)}$ is a line of the ruling of the cone $V^{d-c}_{x_t}$. Then (ii) and the final assertion of the proposition follow.   
\end{proof}
\end{proposition}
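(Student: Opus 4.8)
The plan is to promote the collinearity furnished by Proposition \ref{proposition:BCD} into the much sharper statement that the residual intersection of the secant line with $X$ collapses to a single high--tangency point, following the strategy of \cite[Theorem C]{BDELU}. First I would observe that $d\geqslant 2n+2$ forces $c\leqslant d-3\leqslant 2d-2n-3$, so Proposition \ref{proposition:BCD} applies and produces, for general $y$, a line $\ell_{(t,y)}$ containing the fibre $f\left(\varphi_t^{-1}(t,y)\right)=\{f(q_1),\ldots,f(q_c)\}$. Since $X$ is smooth of degree $d\geqslant n+3$ it is of general type, hence contains no line, so $\ell_{(t,y)}$ meets $X$ along a length--$d$ scheme and the residual $0$--cycle $(\ell_{(t,y)}\cdot X)-f(q_1)-\cdots-f(q_c)$ has degree $d-c$. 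Statement (i) amounts exactly to the claim that this residual cycle equals $(d-c)\,x_t$ for a single point $x_t\in f(C_t)$ independent of $y$.

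The heart of the argument is a proof by contradiction. I would assume that, as $y$ varies in $\{t\}\times\mathbb{P}^1$, the residual cycle is \emph{not} supported at one $y$--independent point, so that it sweeps out a (possibly reducible) curve $D_t\subset X$. Each component $D_t'$ is dominated by the incidence curve $E_t=\overline{\{(x,y)\in D_t\times\mathbb{P}^1\mid x\in\ell_{(t,y)}\}}$ via the first projection, while the second projection $E_t\to\mathbb{P}^1$ has degree $d-c$; hence $\gon(D_t')\leqslant d-c$. Since $c\geqslant\covgon(X)\geqslant d-n$ by \eqref{eq:theoremA}, combining with $d\geqslant 2n+2$ gives $\covgon(X)+c\geqslant 2(d-n)\geqslant d+2>d$, i.e. $\covgon(X)>d-c\geqslant\gon(D_t')$. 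Hence the curves $D_t$ cannot cover $X$ and instead fill up a proper irreducible subvariety $S$ of some dimension $1\leqslant s\leqslant n-1$.

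The decisive step, which I expect to be the main obstacle, is to trap $\covgon(S)$ between two incompatible bounds. On one side \eqref{eq:proposition3.8} gives $\covgon(S)\geqslant d-2n+s$. On the other side I would use the congruence of lines of diagram \eqref{diagram:Congruence}: a general point of $S$ lies on an $(n-s)$--dimensional subfamily of the lines $\ell_{(t,y)}$, so $S$ is the $\mu$--image of an irreducible component $Z$ of the ramification divisor $R$ of $\mu\colon\mathfrak{P}\to\mathbb{P}^{n+1}$ with $\phi_{|Z}$ dominant onto $B$. Writing $e:=\deg(\phi_{|Z})$, the rational curves traced in $B$ by the pencils $\ell_{(t,y)}$ pull back to rational curves on $Z$ dominating $D_t$, which yields $\covgon(S)\leqslant e$. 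The fibrewise intersection $(L\cdot R)=n$ of \cite[Proposition 1]{D1}, together with the multiplicity estimate $\ord_Z(R)\geqslant n-s$ from \cite[Corollary A.6]{BDELU}, forces $e(n-s)\leqslant n$, hence $\covgon(S)\leqslant\frac{n}{n-s}$. Pooling the two bounds with $d\geqslant 2n+2$ gives $s+2\leqslant\frac{n}{n-s}=\frac{s}{n-s}+1$, which fails for every $1\leqslant s\leqslant n-1$ --- the desired contradiction.

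With the residual cycle now pinned to a single $y$--independent point $x_t$, I would conclude by noting that the surface $\Sigma_t$ ruled by the lines $\ell_{(t,y)}$ cuts $X$ in a curve supported on $f(C_t)$, so $x_t\in f(C_t)$; the identity $\ell_{(t,y)}\cdot X=(d-c)x_t+f(q_1)+\cdots+f(q_c)$ then exhibits each $\ell_{(t,y)}$ as a ruling line of the cone $V^{d-c}_{x_t}$, giving $f(C_t)\subset V^{d-c}_{x_t}\cap X$ and hence (i). For (ii), the line through $x_t$ carrying each fibre $\{f(q_i)\}$ is exactly the projection from $x_t$, so $\varphi_t$ factors as $f_t$ followed by this projection; the image $R_t$ of $f(C_t)$ is then a rational curve contained in the hyperplane section $\Lambda^{d-c}_{x_t}$ of the cone, as asserted.
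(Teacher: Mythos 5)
Your proposal is correct and follows essentially the same route as the paper's own proof: the same contradiction argument via the residual curve $D_t$, the same gonality bound $\gon(D_t')\leqslant d-c$ from the incidence curve $E_t$, the same two-sided trap $d-2n+s\leqslant\covgon(S)\leqslant\frac{n}{n-s}$ using \eqref{eq:proposition3.8}, \cite[Proposition 1]{D1} and \cite[Corollary A.6]{BDELU}, and the same conclusion via the ruled surface $\Sigma_t$. One small slip: being of general type does not by itself preclude lines on $X$ (it only precludes being covered by them); what you actually need --- and what holds here since $X$ is very general of degree $d\geqslant 2n+2$, so carries no rational curves at all --- is merely that $\ell_{(t,y)}\not\subset X$, so that $\ell_{(t,y)}\cdot X$ is a $0$--dimensional scheme of length $d$.
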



\section{The vector bundles approach}\label{section:UniversalFamilies}

Let us consider a smooth hypersurface $X_F \subset \mathbb{P}^{n+1}$ defined by the vanishing of a non--zero polynomial $F$ of degree $d$ and, for the sake of brevity, let us set $\mathbb{G}:=\mathbb{G}(1,n+1)$. For a positive integer $r$, we define the variety
\begin{equation} \label{eq:dd} \ddt_{r,F} := \left\{\left.(x, [\ell]) \in X_F \times \mathbb{G} \right|\ell \cdot X_F \geqslant r x \right\}\end{equation}
(by $\ell \cdot X_F$ we mean the intersection scheme of $\ell$ and $X_F$, which is of finite length unless $\ell$ is contained in $X_F$, in which case, by convention, $\ell \cdot X_F \geqslant r x$ for all $x\in \ell$ and for all non--negative integers $r$).  For $r \leqslant {\rm min}\{d,2n+1\}$, $\ddt_{r,F}$ turns out to be nonempty, smooth, irreducible of
dimension $2n+1 - r$ (cf. Lemma \ref{lem:P4.1}).
The main goal of this section is to find necessary  conditions on the integers $r,d,n,l$ for the existence of a uniruled subvariety $Y\subset \ddt_{r,F}$ of dimension $l \geqslant n$, for  $X_F$  very general (see Corollary \ref{corollary:RationalCurvesVeryGen}). 
To this aim, we will follow the argument of \cite{P}, relying on the approach of \cite{E, V}.


\subsection{Hypersurfaces and lines of high contact order}\label{section:Contact}

Given two integers $n,d\geqslant 2$,  we set 
$$
\mathbb{P} := \mathbb{P}^{n+1},\quad N+1 := \dim_{\mathbb{C}}(S_d)={d+n+1\choose d}.
$$
Using the same notation as in the proof of Lemma \ref{lemma:Z}, let $\mathcal{X}\subset \mathbb{P}\times S^*_d$ be the universal hypersurface over $S^*_d$, which has dimension $N+1+n$, and let
\[ S^*_d  \stackrel {\sigma} \longleftarrow  \mathcal{X} \stackrel {\tau}\longrightarrow  \mathbb{P} \]
be the two projections. Besides, we denote by $\mathcal{P} \subset \mathbb{P} \times \mathbb{G}$ the universal family of lines over $\mathbb{G}$, endowed with the two  projections
\[ \mathbb G  \stackrel {\pi_2} \longleftarrow  \mathcal{P} \stackrel {\pi_1}\longrightarrow  \mathbb{P}. \] 
The morphisms $\pi_1$ and $\pi_2$ make $\mathcal P$ a $\mathbb{P}^n$--bundle over $\mathbb{P}$, respectively,  a $\mathbb{P}^1$-bundle over $\mathbb G$; since ${\Pic} (\mathbb{P}) \cong \mathbb{Z} [\cO_{\bP}(1)]$ and 
${\Pic} (\mathbb{G}) \cong \mathbb{Z} [\cO_{\mathbb{G}}(1)]$, where $\cO_{\mathbb{G}}(1)$ is the \emph{Pl\"ucker line bundle} on $\mathbb G$, it follows 
that the Picard group ${\Pic}(\cP)$ is generated by the line bundles 
$$
L:= \pi_2^*(\cO_{\mathbb{G}}(1)) \quad \text{and} \quad H:= \pi_1^*(\cO_{\bP}(1))
$$
(cf. also \cite[p. 609]{V2}).

Let $1 \leqslant r \leqslant {\min} \{d, n+1\}$ be an integer and, similarly to \eqref{eq:dd}, consider the variety 
\begin{equation*}
\ddt_r := \left\{\left.(x, [\ell], F) \in \cP \times S^*_d \right| \ell \cdot X_F \geqslant r x \right\}
\end{equation*}
endowed with the two projections
$$ \mathcal P  \stackrel {\psi} \longleftarrow  {\tilde \Delta_r} \stackrel {\phi}\longrightarrow  S^*_d .$$
Since $r \leqslant d$, the map $\psi$ is surjective; indeed, for any $(x, [\ell]) \in \mathcal{P}$, there is the triple $(x, [\ell], F)\in \ddt_r$, where
$X_F$ is the hypersurface consisting of $d$ general hyperplanes through  $x \in \mathbb{P}$. For any $(x, [\ell]) \in \cP$, one has
$$
\psi^{-1}\left((x,[\ell])\right) \cong \left\{ \left. F \in S^*_d \right|X_F \cdot \ell \geqslant r x\right\} \cong H^0(\bP, \cI_{rx/\bP} (d)) \smallsetminus\{0\}.
$$
Moreover, the line bundle $\cO_{\bP} (d)$ is $(r-1)$--very ample as $d\geqslant r$ (recall that a line bundle $L$ over a projective variety $Y$ is said to be 
{\em $k$--very ample} if the natural restriction map $H^0(Y,L) \to H^0(Z, L \otimes \cO_Z)$ is surjective, for any zero--dimensional subscheme $Z \subset Y$ of length at most $k+1$).  Thus from the exact sequence
$$ 
0 \to \cI_{rx/\bP} (d) \to \cO_{\bP} (d) \to \cO_{rx} (d)  \to 0
$$
we deduce that $h^1(\cI_{rx/\bP} (d)) =0$.
Hence $\psi$ is smooth of relative dimension $N+1-r$, and each fiber is irreducible.
Then $\ddt_r$ is smooth, irreducible, of dimension
$$
\dim(\ddt_r) = \dim(\cP) + h^0( \cI_{rx/\bP} (d)) = 2n+2 + N-r.
$$

\begin{remark}\label{remark:Invarianza} It follows from the  definition that  $\ddt_r$ is invariant under the action of $\mathrm{GL}(n+2)$ on  $\mathcal{P} \times S^*_d$ defined as follows. 
Given any $g\in \mathrm{GL}(n+2)$ and any triple $(x,[\ell],F)\in \mathcal{P} \times S^*_d$, then
$$
g \cdot (x, [\ell], F) := \left(g(x), g(\ell), \left({g^{-1}}\right)^*(F)\right) \in \mathcal{P} \times S^*_d,
$$
where $g(\ell)$ denotes the line which is (projectively) equivalent to $\ell$ under $g$.
\end{remark}

We note that $\ddt_r$ is endowed with the natural map 
$\rho\colon \ddt_r \longrightarrow \cX$ given by $(x, [\ell], F) \longmapsto (x, F)$, which fits in the following commutative diagram
\begin{equation*}
\xymatrix{\ddt_r  \ar[d]_\psi  \ar@/^1.5 pc/[rr]^-\phi \ar[r]^\rho & \mathcal{X} \ar[d]^\tau \ar[r]^\sigma & S^*_d \\  \mathcal{P} \ar[d]_{\pi_2} \ar[r]^{\pi_1} &  \mathbb{P} & \\ \mathbb{G} & & \,\,\,\,\,\,\,\, }
\end{equation*}

Since $r \leqslant n+1$,  \cite[Lemma 4.1]{P} may be rephrased as follows.
\begin{lemma}\label{lem:P4.1}
The map $\rho$ is surjective. Furthermore, if $F\in S^*_d$ is a general polynomial, then
\begin{itemize}
\item[(a)] the subvariety
$$
\ddt_{r,F} := \phi^{-1}(F)\subset \ddt_r
$$
is smooth, irreducible, of dimension $2n+1-r$;
\item[(b)] the restriction
${\tau\circ\rho}_{\left|{\ddt_{r,F}}\right.} \colon  \ddt_{r,F} \longrightarrow \bP$ maps onto
$$
\Delta_{r,F} :=\left\{ p \in X_F \left| \; \exists\; [\ell]\in \mathbb{G} \text{ s.t. } \ell \cdot X_F \geqslant rp\right.\right\} \subseteq X_F.
$$
\end{itemize}
\end{lemma}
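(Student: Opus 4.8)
The plan is to dispatch the surjectivity of $\rho$ and part (b) as formal consequences of the construction, to read off the dimension and smoothness in part (a) from the general properties of $\phi$, and to isolate the irreducibility of $\ddt_{r,F}$ as the one genuinely substantial point. First I would prove that $\rho$ is surjective. The morphism $\rho$ is the restriction to the closed subvariety $\ddt_r\subset\cP\times S^*_d$ of the projection $\cP\times S^*_d\to\mathbb{P}\times S^*_d$ induced by $\pi_1\colon\cP\to\mathbb{P}$; since $\pi_1$ is proper, so is $\rho\colon\ddt_r\to\cX$, and $\rho(\ddt_r)$ is closed in $\cX$. As $\ddt_r$ and $\cX$ are both irreducible (the latter because its projection $\tau$ to $\mathbb{P}$ has irreducible fibres over the irreducible base $\mathbb{P}$), it suffices to check that $\rho$ is dominant, i.e. that its fibre over a general $(x,F)\in\cX$ is nonempty. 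That fibre is $\{[\ell]\in\mathbb{G}\mid \ell\cdot X_F\geqslant rx\}$, the set of lines of the cone $V^r_x$ through $x$, parametrised by $\Lambda^r_x$; by Lemma \ref{lemma:LambdaGeneral} it is a general complete intersection of type $(2,3,\dots,r-1)$ in $\mathbb{P}^{n-1}$, nonempty exactly because $r-2\leqslant n-1$, that is $r\leqslant n+1$. Hence $\rho$ is proper and dominant, therefore surjective.

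Part (b) I would then settle directly: by construction $\tau\circ\rho$ sends $(x,[\ell],F)$ to $x$, so the image of $(\tau\circ\rho)|_{\ddt_{r,F}}$ is precisely the set of $x\in X_F$ admitting some $[\ell]$ with $\ell\cdot X_F\geqslant rx$, which is the definition of $\Delta_{r,F}$.

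For part (a), the dimension and smoothness are cheap. Since $\phi=\sigma\circ\rho$ is a surjective morphism from the smooth irreducible variety $\ddt_r$, of dimension $2n+2+N-r$, to $S^*_d$, of dimension $N+1$, its general fibre $\ddt_{r,F}$ is smooth by generic smoothness over $\mathbb{C}$ and of pure dimension $(2n+2+N-r)-(N+1)=2n+1-r$. As a smooth variety is irreducible if and only if it is connected, the whole of part (a) reduces to proving that $\ddt_{r,F}$ is connected for general $F$; equivalently, via the Stein factorisation $\ddt_r\to T\stackrel{g}{\to}S^*_d$ of $\phi$, that $g$ is birational.

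This last connectedness statement is the main obstacle, and it is exactly the rephrasing of \cite[Lemma 4.1]{P} available in the range $r\leqslant n+1$. The natural approach is to study the projection $\rho|_{\ddt_{r,F}}\colon\ddt_{r,F}\to X_F$, whose fibre over $x$ is $\Lambda^r_x$: when $r\leqslant n$ the general fibre is a positive-dimensional general complete intersection in $\mathbb{P}^{n-1}$, hence connected, and connectedness of $\ddt_{r,F}$ over the connected base $X_F$ follows once the degenerations of $\Lambda^r_x$ over the special locus of $X_F$ are controlled. The delicate case is the boundary $r=n+1$, where $\Lambda^r_x$ is a zero-dimensional general complete intersection consisting of $n!$ reduced points: there connectedness is no longer local on $X_F$ and one must invoke the monodromy argument of \cite[Lemma 4.1]{P}, proving that the monodromy of this finite family of lines of maximal contact acts transitively. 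Essentially all the difficulty of the lemma is concentrated in this monodromy/connectedness step.
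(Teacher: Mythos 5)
Your proposal is correct where it commits itself, and it in fact does more than the paper, whose entire proof of this lemma is the single sentence preceding it: ``Since $r\leqslant n+1$, \cite[Lemma 4.1]{P} may be rephrased as follows'' --- everything is delegated to Pacienza. The parts you prove are all valid: surjectivity of $\rho$ by properness plus dominance (the fibre over $(x,F)$ with $X_F$ smooth at $x$ is identified with $\Lambda^r_x$, cut out by $r-2\leqslant n-1$ hypersurfaces in $\mathbb{P}^{n-1}$, hence nonempty); part (b) is indeed tautological; smoothness and the dimension count follow from generic smoothness applied to $\phi$, which is proper because $\ddt_r$ is closed in $\cP\times S^*_d$ and $\cP$ is complete --- this properness is also what legitimizes your Stein--factorization reformulation of irreducibility. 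Since you park the one substantial point, connectedness of the general fibre, on exactly the citation that the paper itself uses for the whole statement, there is no gap relative to the paper. One caution, though: your reconstruction of what lies inside \cite[Lemma 4.1]{P} is speculative and should not be leaned on. As sketched, the fibrewise argument for $r\leqslant n$ is genuinely incomplete --- connectedness of the general fibre $\Lambda^r_x$ does not preclude a component of $\ddt_{r,F}$ sitting entirely over the locus of $X_F$ where $\Lambda^r_x$ jumps in dimension, and purity of dimension $2n+1-r$ fails to rule this out as soon as $r\geqslant 3$ --- while transitive monodromy at $r=n+1$ is an assertion, not an argument. The route suggested by the paper's own \S 3.2 is cleaner and uniform in $r$: over $\pi_1^{-1}(X_F)$ the section of $\cB_{d,r}$ determined by $F$ takes values in the kernel of $\cB_{d,r}\twoheadrightarrow\cB_{d,1}$, a rank-$(r-1)$ bundle filtered by the line bundles $(d-2(j-1))H+(j-1)L$ for $2\leqslant j\leqslant r$, which are ample whenever $d\geqslant 2r-1$ (as holds in the range where the lemma is applied); connectedness of its zero locus $\ddt_{r,F}$ then follows from the Sommese-type connectedness theorem for sections of ample vector bundles, with no case division and no monodromy.
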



\subsection{The canonical bundle of $\ddt_{r,F}$}\label{section:Canonical}

Let $F \in S^*_d$ be general.
The restriction to $\ddt_{r,F}$ of the map $\psi\colon \ddt_r \twoheadrightarrow \cP$ is an isomorphism onto its image, realizing $\ddt_{r,F}$ as in \eqref {eq:dd}.

Let $\mathcal L$ be the universal rank $2$ quotient bundle on $\mathbb G$ ($\mathbb{G}$ is endowed with the universal quotient bundle $\mathcal L$ and with the map $(\mathbb{C}^{n+2})^{\vee} \otimes \cO_{\mathbb{G}} \twoheadrightarrow \mathcal L$ such that, for any $[\ell] \in \mathbb{G}$, $\mathcal L_{[\ell]}^{\vee}$ is the $2$ dimensional sub--vector space  of $\mathbb{C}^{n+2}$ determining the line $\ell \subset \mathbb{P}$, cf. e.g. \cite[Thm.4.3.2]{Ser}. As such, $\mathcal P = {\rm Proj} ({\rm Sym}(\mathcal L))$ is the universal line over $\mathbb{G}$ via the map $\pi_2$).   
For any positive integer $m$, we set $\cE_m := \Sym^m(\cL)$, which is a vector bundle on $\mathbb{G}$. Since its fiber over $[\ell] \in \mathbb{G}$ identifies with $H^0(\ell, \cO_{\ell}(m))$, the rank of $\cE_m$ is $m+1$ and $c_1(\cE_m) = \cO_{\bG} \big(\frac{m(m+1)}{2}\big)$. 
The vector bundle $\pi_2^*(\cE_m)$ on $\cP$ is of the same rank and its fiber over $(x, [\ell]) \in \mathcal P$ once again identifies with $H^0(\ell, \cO_{\ell}(m))$. 

For any integer $1 \leqslant r \leqslant d$ and for any $(x, [\ell]) \in \mathcal P$, consider the  divisor $r x$ on $\ell \cong \mathbb{P}^1$ and the associated line bundle 
 $\cO_{\ell} (-rx) \cong \cO_{\mathbb{P}^1} (-r)$. Let $\cO_{\ell}(d)(-rx)$ denote the line bundle $\cO_{\ell}(d) \otimes \cO_{\ell}(-rx) \cong \cO_{\mathbb{P}^1} (d-r)$. 
 This fits in the exact sequence 
$$ 0 \to \cO_{\ell}(d)(-rx) \to \cO_{\ell}(d) \to \cO_{rx} (d) \to 0;$$ passing to cohomology, one has an injection  $H^0(\ell, \cO_{\ell}(d)(-rx)) \hookrightarrow 
H^0(\ell, \cO_{\ell}(d))$. As $(x, [\ell])$ varies in $\mathcal P$, one deduces that the vector bundle $\pi_2^*(\cE_d)$ on $\cP$ contains a rank ($d+1-r$)--sub--vector bundle
$\cA_{d,r} \hookrightarrow \pi_2^*(\cE_d)$, whose fiber over $(x, [\ell]) \in \cP$ identifies with $H^0(\ell, \cO_{\ell}(d)(-rx))$.  

Consider the exact sequence
\begin{equation}\label{eq:Brd}
0 \to \cA_{d,r} \to \pi_2^*(\cE_d) \to \cB_{d,r} \to 0
\end{equation}
defining the quotient $\cB_{d,r}$ as  a rank $r$ vector bundle.
Arguing as in \cite[p. 263--264]{P} one sees that $\ddt_{r,F} \subset \cP$ is the vanishing locus of a global section of $\cB_{d,r}$. 
Thus the normal bundle $N_{\ddt_{r,F}/\cP}$ of $\ddt_{r,F}$ in $\cP$  satisfies
\begin{equation}\label{eq:normal}
N_{\ddt_{r,F}/\cP} \cong {\cB_{d,r|\ddt_{r,F}}}.
\end{equation}
Moreover, the smoothness of $\ddt_{r,F}$ and adjunction formula yield
\begin{equation}\label{eq:Kddt}
\omega_{\ddt_{r,F}}  = \omega_{\mathcal P} \otimes \cO_{\ddt_{r,F}} \otimes c_1(N_{\ddt_{r,F}/\cP}) \cong \omega_{\mathcal P} \otimes \cO_{\ddt_{r,F}} \otimes c_1(\cB_{d,r}) 
= \cO_{\ddt_{r,F}} \big( K_{\cP} \big) \otimes c_1(\cB_{d,r}) 
\end{equation}
where $K_{\cP}$ denotes a canonical divisor of $\mathcal P$, thus $\omega_{\mathcal P} = \cO_{\mathcal P} \big( K_{\cP} \big)$, so $\omega_{\mathcal P} \otimes \cO_{\ddt_{r,F}} = 
\cO_{\ddt_{r,F}} \big( K_{\cP} \big)$. Since ${\Pic}(\cP)=\mathbb Z[H,L]$, one must have $\omega_{\mathcal P} \cong H^{\otimes a} \otimes L^{\otimes b}$, for some integers $a,b$. Using adjunction formula and restricting to any $\pi_1$--fiber of $\mathcal P$ (which is a $\mathbb{P}^n$) one gets $b = -(n+1)$ whereas restricting to any $\pi_2$--fiber of $\mathcal P$ (which is a line) one gets 
$a = -2$, i.e. $\omega_{\mathcal P} \cong (H^{\vee})^{\otimes 2} \otimes (L^{\vee})^{\otimes (n+1)}$.  With an usual abuse of notation, 
identifying Cartier divisors with associated line bundles  and so additive notation with tensor products, we can  write 
$K_{\cP} = - 2 H - (n+1)L$, as done also in \cite[p.\;609]{V2}. In order to compute $c_1(\cB_{d,r})$, we first note that 
the natural isomorphisms 
$$\cO_{\ell}(r)(-rx) \cong \cO_{\ell} \cong \cO_{\mathbb{P}^1} \; \text{and} \; H^0(\ell, \cO_{\ell}(d)(-rx)) \cong H^0(\ell, \cO_{\ell}(d-r)) \otimes H^0(\ell, \cO_{\ell}(r)(-rx))$$
give rise, as $(x, [\ell])$ varies in $\mathcal P$, to the following isomorphism of vector bundles on $\mathcal P$
$$
\cA_{d,r} \cong \pi_2^*(\cE_{d-r}) \otimes \cA_{r,r},
$$
where $\cA_{r,r} = r (L-H) \in {\rm Pic}(\cP)$.
Whence we deduce
\begin{eqnarray*}
c_1( \cA_{d,r} ) & =   \frac{(d+r)(d-r+1)}{2} L - r (d-r+1) H,
\end{eqnarray*}
so that \eqref{eq:Brd} implies
\begin{eqnarray*}
c_1( \cB_{d,r} ) & =  \frac{r(r-1)}{2} L + r (d-r+1) H
\end{eqnarray*}
and by \eqref{eq:Kddt}, we  conclude that
\begin{equation}\label{eq:Kddt2}
\omega_{\ddt_{r,F}}  =  
\cO_{\ddt_{r,F}} \left( (r(d-r+1)-2) H + \left( \frac{r(r-1)}{2} - n-1 \right) L \right).
\end{equation}


\subsection{Global generation lemmas}\label{section:Global}

Let $F \in S^*_d$ be a general point, and consider the inclusion $\ddt_{r,F} \subset\ddt_r$.
For any integer
$1 \leqslant l \leqslant \dim(\ddt_{r,F}) = 2n+1-r$, one has
\begin{equation}\label{eq:tangbund}
\bigwedge^{2n+1-r-l} {T_{\ddt_r|{\ddt_{r,F}}}} \otimes \omega_{\ddt_{r,F}} \cong \; \stackrel{\vee}{\Omega}^{2n+1-r-l}_{{\ddt_r}|{\ddt_{r,F}}} \otimes \; \Omega^{2n+2+N-r}_{{\ddt_r}|{\ddt_{r,F}}}
\cong \; \Omega^{N+1+l}_{{\ddt_r}|{\ddt_{r,F}}}.
\end{equation}
Consider the exact sequence 
\begin{equation}\label{eq:MdGrass}
0 \longrightarrow M_d \longrightarrow S_d \otimes \cO_{\bG}\stackrel {\rm ev}\longrightarrow \cE_d \longrightarrow 0
\end{equation}
where 
\begin{eqnarray*}
{\rm ev}_{[\ell]}\colon S_d \otimes \cO_{\bG, [\ell]} & {\longrightarrow} & {\cE_d}_{|\ell} \cong H^0(\ell, \cO_{\ell}(d)) \\
(F, [\ell]) & \longmapsto & F_{|\ell}
\end{eqnarray*}
and  $M_d:=\ker({\rm ev})$.
The fiber of $M_d$ at $[\ell] \in \bG$ identifies with $H^0(\ell, \cI_{\ell/\bP} (d))$.

Next we consider the exact commutative diagram 
$$
\begin{array}{ccccccc}
 &    &   &0 & &   & \\
 &    &   &\downarrow & &   & \\
 &    &   & \fM_d & &   & \\
  &    &   &\downarrow & &   & \\
 
0 \to &       \fN_{d,r} &  \to  & S_d \otimes \cO_{\cP} & \to &  \cB_{d,r}  & \to 0 \\
      &                        &       & \downarrow            &     &  | |        &       \\
0 \to &           \cA_{d,r}    & \to   &  \pi_2^*(\cE_d)  & \to  & \cB_{d,r} &  \to 0   \\
     &                        &       & \downarrow            &     &          &       \\
		     &                        &       & 0           &     &       &
\end{array}
$$
where the central vertical column is obtained 
by pulling ${\rm ev}$ back to $\cP$  via  $\pi_2$, the bottom row is \eqref{eq:Brd},
$\fN_{d,r}$ and $ \fM_d $ are defined as the appropriate kernels, 
and
$$
{\rm rk} (\fM_d)  = N-d, \quad {\rm rk} (\fN_{d,r})  = N+1-r.
$$

By the Snake Lemma we have the exact commutative diagram

\begin{equation}\label{eq:diag(punto)F}
\begin{array}{ccccccccr}
    &                        &       & 0          &     &       0   &       & & \\
    &                        &       & \downarrow            &     &  \downarrow         &   & &        \\

    &                0        &  \to     & \fM_d           &  \to    &    \fN_{d,r}      &  \to  & \cA_{d,r} &  \to\;0        \\

    &                        &       & \downarrow            &     &      \downarrow    &        & & \\

 &        &  & S_d \otimes \cO_{\cP} & = &   S_d \otimes \cO_{\cP}   &   & & \\
      &                        &       & \downarrow            &     &  \downarrow       &  & &      \\
0 \to &           \cA_{d,r}    & \to   &  \pi_2^*(\cE_d)  & \to  & \cB_{d,r} &  \to     & 0 & \\
     &                        &       & \downarrow            &     &        \downarrow  &  & &       \\
		     &                        &       & 0           &     &        0 &       &  & 
\end{array}
\end{equation}

\begin{lemma}\label{lem:aiuto} With notation as above, one has:
\begin{itemize}
\item[(i)] $\fM_d = \pi_2^*(M_d)$;
\item[(ii)] $\bigwedge^t {\fM}_d \otimes tL$ is globally generated, for any integer $1 \leqslant t \leqslant {\rm rk}(\fM_d) = N-d$;
\item[(iii)] ${\fM_{d|\ddt_{r,F}}} \hookrightarrow {\fN_{d,r|\ddt_{r,F}}} \hookrightarrow {T_{\ddt_r|\ddt_{r,F}}}$.
\end{itemize}
\begin{proof} Looking at the left--most exact column in \eqref{eq:diag(punto)F},  assertion (i) follows by pulling-back via $\pi_2$ the exact sequence \eqref{eq:MdGrass}. 

As for (ii), by \cite[Proposition 2.2(ii)]{P}, the vector bundle $M_d \otimes \cO_{\bG} (1)$ is globally generated,
i.e. $H^0(\bG,  M_d \otimes \cO_{\bG} (1)) \otimes \cO_{\bG} \twoheadrightarrow M_d \otimes \cO_{\bG} (1)$. 
Applying $\pi_2^*$ to this surjection and using (i), we obtain an induced surjection 
\begin{equation}\label{eq:gg}
H^0( \bG, M_d \otimes \cO_{\bG} (1)) \otimes \cO_{\cP} \twoheadrightarrow {\fM}_d \otimes L.
\end{equation}
Similarly, one has $H^0 (\cP, {\fM}_d \otimes L) \cong H^0(\cP, \pi_2^*(M_d \otimes \cO_{\bG} (1)))$.
We have
$$
{\pi_{2}}_*( \pi_2^*(M_d \otimes \cO_{\bG} (1))\cong 
 M_d \otimes \cO_{\bG} (1) \otimes {\pi_{2}}_*(\cO_{\cP}) \cong M_d \otimes \cO_{\bG} (1)
$$ 
because ${\pi_{2}}_*(\cO_{\cP})\cong \cO_{\bG}$ since the $\pi_2$--fibers are lines. Thus
$$
H^0(\cP, {\fM}_d \otimes L) \cong H^0(\bG, M_d \otimes \cO_{\bG} (1)).
$$
This isomorphism and \eqref {eq:gg} yield that ${\fM}_d \otimes L$ is globally generated.
Then its exterior powers, namely $\bigwedge^t {\fM}_d \otimes tL$ for any  $1 \leqslant t \leqslant {\rm rk}(\fM_d) = N-d$, are globally generated too.

Finally we prove (iii).  
Recall that, for any $F \in S^*_d$, $\ddt_{r,F}$ identifies with its image in $\cP$ under $\psi$.
Accordingly, we identify $\psi^*(T_{\cP})_{|{\ddt_{r,F}}}$ with ${T_{\cP|{\pi(\ddt_{r,F})}}}$ and, by abusing notation, we  denote it by ${T_{\cP|{\ddt_{r,F}}}}$. 
From the inclusions of schemes 
$$
\ddt_{r,F} \subset \ddt_r \subset \cP \times S^*_d
$$ 
and the fact that $\ddt_{r,F}$ is a $\phi$--fiber, we obtain the following exact sequence
\begin{equation}\label{eq:C}
0 \to {T_{\ddt_r|{\ddt_{r,F}}}} \to {T_{\cP|{\ddt_{r,F}}}}  \oplus (S_d \otimes \cO_{\ddt_{r,F}} ) \to  N_{\ddt_{r,F}/\cP} \to 0.
\end{equation}
Restricting the right--most exact column in \eqref{eq:diag(punto)F} to $\ddt_{r,F}$, we get
\begin{equation}\label{eq:B}
0 \to {\fN_{d,r|{\ddt_{r,F}}}}    \to   S_d \otimes \cO_{\ddt_{r,F}}  \to   {\cB_{d,r|{\ddt_{r,F}}}}  \cong N_{\ddt_{r,F}/\cP}  \to 0,
\end{equation}
where the isomorphism on the right is \eqref{eq:normal}, whereas the injectivity on the left follows from $\mathcal{T}\mathrm{or}^1( \cB_{d,r}; \cO_{\ddt_{r,F}}) = 0$, since $\cB_{d,r}$ is locally free.
The sequences \eqref{eq:B} and \eqref{eq:C} fit in the exact commutative diagram
\begin{equation}\label{eq:*}
\begin{array}{cccccccr}
   &  0    &   &  0  &   &   & &   \\
	  &  \downarrow    &   &  \downarrow  &   &   & &   \\
	
	0 \to &  {\fN_{d,r|{\ddt_{r,F}}}}     & \to   &   S_d \otimes \cO_{\ddt_{r,F}} & \to  & N_{\ddt_{r,F}/\cP} &  \to & 0   \\
	
	  &  \downarrow    &   &  \downarrow  &   & ||  &   & \\

	0 \to &  {T_{\ddt_r|{\ddt_{r,F}}}}    & \to   & {T_{\cP|{\ddt_{r,F}}}}  \oplus ( S_d \otimes \cO_{\ddt_{r,F}} )  & \to  &N_{\ddt_{r,F}/\cP} &  \to & 0   \\

	  &  \downarrow    &   &  \downarrow  &   &   &  &  \\
	
 &   {T_{\cP|{\ddt_{r,F}}}}    & = & {T_{\cP|{\ddt_{r,F}}}} &  &  &  &  \\
	  &  \downarrow    &   &  \downarrow  &   &   & &   \\
	
	  &  0    &   &  0  &   &   &  &  .
	\end{array}
	\end{equation}
Then the left--most column in \eqref{eq:*} gives ${\fN_{d,r|{\ddt_{r,F}}}} \hookrightarrow {T_{\ddt_r|{\ddt_{r,F}}}}$.
Finally, by restricting the upper  sequence in \eqref{eq:diag(punto)F} to $\ddt_{r,F}$, we get the inclusion ${\fM_{d|{\ddt_{r,F}}}} \hookrightarrow {\fN_{d,r|{\ddt_{r,F}}}}$, proving (iii).
\end{proof}
\end{lemma}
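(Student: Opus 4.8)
The plan is to extract all three assertions from the two commutative diagrams already assembled, reducing everything to exactness of pullback together with a single cited global generation statement. For (i), I would note that the central vertical column of the commutative diagram defining $\fM_d$ and $\fN_{d,r}$ is, by its very construction, the pullback along $\pi_2$ of the defining sequence \eqref{eq:MdGrass} of $M_d$ on $\bG$. Since $\pi_2$ is flat (being the projection of a $\mathbb{P}^1$--bundle), the functor $\pi_2^*$ is exact and carries $S_d \otimes \cO_{\bG}$ to $S_d \otimes \cO_{\cP}$ and $\cE_d$ to $\pi_2^*(\cE_d)$; hence the kernel $\fM_d$ is canonically identified with $\pi_2^*(M_d)$. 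For (ii), I would invoke \cite[Proposition 2.2(ii)]{P}, which gives that $M_d \otimes \cO_{\bG}(1)$ is globally generated on $\bG$. Pulling the surjection $H^0(\bG, M_d \otimes \cO_{\bG}(1)) \otimes \cO_{\bG} \twoheadrightarrow M_d \otimes \cO_{\bG}(1)$ back along $\pi_2$ and using (i) together with $\pi_2^*(\cO_{\bG}(1)) = L$ yields a surjection onto $\fM_d \otimes L$, so that $\fM_d \otimes L$ is globally generated on $\cP$; here the projection formula ${\pi_2}_*\cO_{\cP} \cong \cO_{\bG}$, valid because the $\pi_2$--fibers are lines, even identifies $H^0(\cP, \fM_d \otimes L) \cong H^0(\bG, M_d \otimes \cO_{\bG}(1))$. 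Since exterior powers of a globally generated bundle are globally generated, and since $\bigwedge^t(\fM_d \otimes L) \cong \bigwedge^t \fM_d \otimes tL$ for the line bundle $L$, the assertion follows for every $1 \leqslant t \leqslant N-d$.

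For (iii), the first inclusion may be read off directly from the top row $0 \to \fM_d \to \fN_{d,r} \to \cA_{d,r} \to 0$ of the diagram \eqref{eq:diag(punto)F} furnished by the Snake Lemma: restricting to $\ddt_{r,F}$ preserves left exactness because $\cA_{d,r}$ is locally free, so $\mathcal{T}\mathrm{or}^1(\cA_{d,r}, \cO_{\ddt_{r,F}}) = 0$, and this gives $\fM_{d|\ddt_{r,F}} \hookrightarrow \fN_{d,r|\ddt_{r,F}}$. The second inclusion is the crux. I would assemble the comparison diagram \eqref{eq:*}, whose top row is the restriction \eqref{eq:B} to $\ddt_{r,F}$ of the right--hand column of \eqref{eq:diag(punto)F}---its exactness again using local freeness of $\cB_{d,r}$ together with the identification $\cB_{d,r|\ddt_{r,F}} \cong N_{\ddt_{r,F}/\cP}$ from \eqref{eq:normal}---and whose middle row is the sequence \eqref{eq:C} encoding that $\ddt_{r,F}$ is at once a $\phi$--fiber of $\ddt_r$ and the zero scheme in $\cP$ of a section of $\cB_{d,r}$. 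A chase of the left--most column of \eqref{eq:*} then produces the injection $\fN_{d,r|\ddt_{r,F}} \hookrightarrow T_{\ddt_r|\ddt_{r,F}}$, and composing the two inclusions completes (iii).

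The step I expect to be the main obstacle is the justification of the middle row \eqref{eq:C}, namely reconciling the two descriptions of $\ddt_{r,F}$---as the fiber of $\phi$ over $F$ and as the vanishing scheme on $\cP$ of a section of $\cB_{d,r}$---so that the two copies of $N_{\ddt_{r,F}/\cP}$ on the right of \eqref{eq:*} are canonically identified and the diagram genuinely commutes. Once this identification is secured, the remainder of the argument is a purely formal diagram chase.
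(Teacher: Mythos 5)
Your proposal is correct and follows essentially the same route as the paper: part (i) by exactness of $\pi_2^*$ applied to \eqref{eq:MdGrass}, part (ii) by pulling back the global generation of $M_d\otimes\cO_{\bG}(1)$ from \cite[Proposition 2.2(ii)]{P} and taking exterior powers, and part (iii) by restricting the rows and columns of \eqref{eq:diag(punto)F} (with the Tor--vanishing justifications) and chasing the left--most column of \eqref{eq:*}. The obstacle you single out---identifying the two occurrences of $N_{\ddt_{r,F}/\cP}$ coming from the $\phi$--fiber description and from \eqref{eq:normal}---is exactly the point the paper settles by realizing $\ddt_{r,F}$ as the zero locus of a section of $\cB_{d,r}$ in \S \ref{section:Canonical}, so your argument matches the paper's in both structure and substance.
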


By \eqref{eq:tangbund}, Lemma \ref{lem:aiuto}(iii) ensures that for any integer $1 \leqslant l \leqslant 2n+1-r$, there is an injection
\begin{equation}\label{eq:aiuto2}
\bigwedge^{2n+1-r-l} {\fM_{d|\ddt_{r,F}}} \otimes \omega_{\ddt_{r,F}} \hookrightarrow \bigwedge^{2n+1-r-l} {T_{\ddt_r|{\ddt_{r,F}}}} \otimes \omega_{\ddt_{r,F}} \cong \Omega^{N+1+l}_{{\ddt_r}|{\ddt_{r,F}}}.
\end{equation} 
Moreover, \eqref{eq:Kddt2} implies that
\small{
\begin{equation}\label{eq:aiuto3}
\bigwedge^{2n+1-r-l} {\fM_{d|{\ddt_{r,F}}}} \otimes \omega_{\ddt_{r,F}} \cong \bigwedge^{2n+1-r-l} {\fM_{d|{\ddt_{r,F}}}} \otimes \left(\frac{r(r-1)}{2} - n -1\right) L_{|{\ddt_{r,F}}} \otimes (r (d-r+1)-2)  H_{|{\ddt_{r,F}}}.
\end{equation}
}

Since $H$ is globally generated, $1 \leqslant r \leqslant {\rm min} \{d, n+1\}$ and $d \geqslant2$, then $(r (d-r+1)-2)  H_{|{\ddt_{r,F}}}$ is globally generated, as well.
Similarly, by Lemma \ref{lem:aiuto}(ii), $\bigwedge^{2n+1-r-l} {\fM_{d|{\ddt_{r,F}}}} \otimes \left(\frac{r(r-1)}{2} - n -1\right) L_{|{\ddt_{r,F}}}$ is globally generated if $\frac{r(r-1)}{2} - n -1 \geqslant 2n+1-r-l$, that is if
\begin{equation}\label{eq:numero}
\frac{r(r+1)}{2} - 3n - 2 + l \geqslant 0.
\end{equation}
Taking into account \eqref{eq:aiuto3}, we deduce the following
\begin{lemma}\label{lem:gg}
If \eqref{eq:numero} holds, then $\bigwedge^{2n+1-r-l} {\fM_d}_{|_{\ddt_{r,F}}} \otimes \omega_{\ddt_{r,F}} $ is globally generated.
\end{lemma}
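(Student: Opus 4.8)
The plan is to read the statement off directly from the isomorphism \eqref{eq:aiuto3}, combining the global generation results already in place with the two standard facts that a tensor product of globally generated sheaves is globally generated, and that the restriction to a subvariety of a globally generated sheaf stays globally generated. Thus it suffices to show that, after restriction to $\ddt_{r,F}$, each of the three tensor factors appearing on the right--hand side of \eqref{eq:aiuto3} is globally generated.

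First I would dispose of the $H$--factor. Since $H=\pi_1^*(\cO_{\bP}(1))$ is globally generated, so is any nonnegative multiple of it, and its restriction to $\ddt_{r,F}$. The coefficient $r(d-r+1)-2$ is indeed nonnegative: on the range $1\leqslant r\leqslant d$ the quantity $r(d-r+1)$ attains its minimum at the endpoints, where it equals $d\geqslant 2$, so $r(d-r+1)-2\geqslant d-2\geqslant 0$. Hence $(r(d-r+1)-2)H_{|{\ddt_{r,F}}}$ is globally generated.

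Next comes the essential factor $\bigwedge^{2n+1-r-l}\fM_{d|{\ddt_{r,F}}}\otimes\left(\tfrac{r(r-1)}{2}-n-1\right)L_{|{\ddt_{r,F}}}$. Setting $t:=2n+1-r-l$, a direct check gives the identity $\tfrac{r(r-1)}{2}-n-1=t+\left(\tfrac{r(r+1)}{2}-3n-2+l\right)$, so that, writing $k:=\tfrac{r(r+1)}{2}-3n-2+l$, one has
\[
\bigwedge^{t}\fM_{d|{\ddt_{r,F}}}\otimes\left(\tfrac{r(r-1)}{2}-n-1\right)L_{|{\ddt_{r,F}}}\;\cong\;\left(\bigwedge^{t}\fM_d\otimes tL\right)_{|{\ddt_{r,F}}}\otimes kL_{|{\ddt_{r,F}}}.
\]
By Lemma \ref{lem:aiuto}(ii) the sheaf $\bigwedge^{t}\fM_d\otimes tL$ is globally generated whenever $1\leqslant t\leqslant N-d$ (and the case $t=0$ is trivial, since $\bigwedge^{0}\fM_d\cong\cO_{\cP}$); the needed bound holds throughout the relevant range $1\leqslant l\leqslant 2n+1-r$, because then $t\leqslant 2n-r\leqslant 2n-1\leqslant N-d$, the last inequality amounting to $2n+d\leqslant\binom{d+n+1}{d}=N+1$, which is a routine estimate for $d\geqslant 2$. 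Restricting to $\ddt_{r,F}$ preserves global generation, and since $L=\pi_2^*(\cO_{\bG}(1))$ is globally generated, so is $kL_{|{\ddt_{r,F}}}$ precisely when $k\geqslant 0$ --- which is exactly the hypothesis \eqref{eq:numero}.

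Tensoring the globally generated sheaf of the previous paragraph with the globally generated sheaf $(r(d-r+1)-2)H_{|{\ddt_{r,F}}}$ and invoking \eqref{eq:aiuto3}, one concludes that $\bigwedge^{2n+1-r-l}\fM_{d|{\ddt_{r,F}}}\otimes\omega_{\ddt_{r,F}}$ is globally generated. The entire content of the argument is the positivity of the $L$--twist recorded in \eqref{eq:numero}; there is no genuine obstacle beyond the bookkeeping that splits the coefficient $\tfrac{r(r-1)}{2}-n-1$ into $t$ plus the nonnegative quantity of \eqref{eq:numero}, the substantive global generation input having already been secured in Lemma \ref{lem:aiuto}(ii) (itself resting on \cite[Proposition 2.2(ii)]{P}).
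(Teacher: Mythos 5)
Your proof is correct and takes essentially the same route as the paper: it reads the statement off the isomorphism \eqref{eq:aiuto3}, splits the coefficient $\tfrac{r(r-1)}{2}-n-1$ as $t=2n+1-r-l$ plus the nonnegative quantity of \eqref{eq:numero} so that Lemma \ref{lem:aiuto}(ii) applies to $\bigwedge^{t}\fM_d\otimes tL$, and disposes of the $H$--factor via nonnegativity of $r(d-r+1)-2$. Your additional verifications (the bound $t\leqslant N-d$, the trivial case $t=0$) are minor details the paper leaves implicit.
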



\subsection{Subvarieties of $\ddt_{r,F}$ of positive geometric genus}\label{section:PositiveGenus} 

Let $U \subset S^*_d$ be the open dense subset parametrizing polynomials $F \in S^*_d$ such that  $X_F$ is smooth. To ease notation, we still denote by $\ddt_r$ the restriction of $\ddt_r\subset \cP \times S_d$ to $U$, and let
$$U \stackrel{\phi}{\longleftarrow}\ddt_r \subset \cP \times U$$
be the projection.

Next we suppose that, 
up to possibly shrinking $U$ and replacing it with a suitable \'etale cover, there is a diagram
$$
\xymatrix{ \mathcal{Y} \,\, \ar@{^{(}->}[r]\ar[dr]_{\phi_{|\mathcal{Y}}} & \ddt_r\ar[d]^{\phi} \\  & U, }
$$
where $\cY \subseteq \ddt_r$ is an integral scheme and $\phi_{|\cY}$ is flat of relative dimension $l \leqslant 2n+1-r = \dim(\ddt_{r,F})$.
Then
$$
\dim(\cY) = N+1+l
$$
and, for a general $F \in U$,  $Y_F:= \phi^{-1}_{|{\mathcal Y}}(F)\subset \ddt_{r,F}$ is irreducible of dimension $l$.

Then we consider the diagram
$$
\xymatrix{ \widetilde{\mathcal{Y}} \ar[r]^{\nu}\ar[dr]_{\widetilde{\phi}} & \mathcal{Y}\ar[d]^{\phi_{|\mathcal{Y}}} \\  & U, }
$$
where $\nu$ is a desingularization of $\mathcal{Y}$ and $\widetilde{\phi}$ is the induced map.
It follows from the smoothness of $\widetilde{\cY}$ that the restriction of $\nu$ to a general fiber $\widetilde{Y}_F:=\widetilde{\phi}^{-1}(F)$ is a desingularization of $Y_F$.
Following \cite[Section 2]{P}, we shall also assume $\mathcal Y$ to be invariant under the action of $\mathrm{GL}(n+2)$ on $\cP \times S_d$ as in Remark \ref{remark:Invarianza}. 

Let us consider the map $\iota\colon \widetilde{\mathcal{Y}}\longrightarrow \ddt_r$, obtained by composing $\nu$ with the inclusion $\mathcal{Y}\hookrightarrow \ddt_r$.
For any integer $l \leqslant 2n+1-r$, the map $\iota^*: \Omega^1_{\ddt_r} \longrightarrow \Omega^1_{\widetilde{\cY}}$ induces the map
$$
\Omega^{N+1+l}_{{\ddt_r}|{\ddt_{r,F}}}  \stackrel{\beta}{\longrightarrow} \Omega^{N+1+l}_{{\widetilde{\cY}}|{\widetilde{Y}_F}} \cong \omega_{\widetilde{Y}_F},
$$
(cf. \cite[Section 2.2(3)]{P}). Composing $\beta$ with the injection \eqref{eq:aiuto2}, we  obtain a map
\begin{equation}\label{eq:Pac5}
\bigwedge^{2n+1-r-l} {\fM_{d|{\ddt_{r,F}}}} \otimes \omega_{\ddt_{r,F}} \stackrel{\alpha}{\longrightarrow} \; \omega_{\widetilde{Y}_F}.
\end{equation}

\begin{lemma}\label{lem:Pac2.3} 
Let $r$, $l$ and $d$ be positive integers such that \eqref{eq:numero} and $d \geqslant {\rm max} \{2,r\}$ hold.
Let $\mathcal{Y} \subset \ddt_r$ be any integral subscheme as above (in particular it is invariant under the ${\rm GL}(n+2, \mathbb{C})$--action on $\mathcal{P} \times S_d$). 
Then, for $F \in U$ general, the map
$$H^0(\alpha): H^0\left(\ddt_{r,F}, \bigwedge^{2n+1-r-l} {\fM_{d|{\ddt_{r,F}}}} \otimes \omega_{\ddt_{r,F}}\right) \longrightarrow H^0(\widetilde{Y}_F, \omega_{\widetilde{Y}_F})$$
induced  by \eqref{eq:Pac5} is non--zero.
In particular, the geometric genus of $\widetilde{Y}_F$ satisfies $p_g(\widetilde{Y}_F) := h^0(\widetilde{Y}_F, \omega_{\widetilde{Y}_F}) >0$.
\end{lemma}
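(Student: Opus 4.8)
The plan is to deduce the positivity of the geometric genus from the non-vanishing of the sheaf morphism $\alpha$ in \eqref{eq:Pac5}, and to establish the latter by a pointwise argument at a general point of $\widetilde{Y}_F$, exploiting the $\mathrm{GL}(n+2)$-invariance of $\mathcal Y$. First I would record the reduction. Writing $\mathcal F := \bigwedge^{2n+1-r-l}\fM_{d|\ddt_{r,F}}\otimes\omega_{\ddt_{r,F}}$ and letting $\nu_F\colon \widetilde{Y}_F\to Y_F\subset\ddt_{r,F}$ be the restriction of $\nu$ to a general fibre, the map $\alpha$ is a morphism $\nu_F^*\mathcal F\to\omega_{\widetilde{Y}_F}$, and $H^0(\alpha)$ is its evaluation on the sections pulled back from $H^0(\ddt_{r,F},\mathcal F)$. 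Since \eqref{eq:numero} and $d\geqslant\max\{2,r\}$ hold, Lemma \ref{lem:gg} gives that $\mathcal F$ is globally generated; hence $\nu_F^*\mathcal F$ is globally generated by the pulled-back sections. Thus, if $H^0(\alpha)=0$, every such section would map to the zero section of $\omega_{\widetilde{Y}_F}$, and---because these sections span every fibre of $\nu_F^*\mathcal F$---the morphism $\alpha$ would vanish identically. Consequently it suffices to prove $\alpha\neq 0$: any nonzero element in the image of $H^0(\alpha)$ is a nonzero holomorphic $l$-form on $\widetilde{Y}_F$, so $p_g(\widetilde{Y}_F)=h^0(\widetilde{Y}_F,\omega_{\widetilde{Y}_F})>0$, as claimed.

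The heart of the argument is therefore the non-vanishing of $\alpha$, which I would check at a general point $q\in\widetilde{Y}_F$, following \cite[Section 2]{P}. Unwinding \eqref{eq:aiuto2} and \eqref{eq:Pac5}, the image under $\alpha$ of a decomposable local section $(\xi_1\wedge\cdots\wedge\xi_{2n+1-r-l})\otimes\omega_0$---with $\xi_i\in\fM_d$ viewed as tangent vectors to $\ddt_r$ through Lemma \ref{lem:aiuto}(iii), and $\omega_0$ a local generator of $\omega_{\ddt_{r,F}}$---is, under the identifications in \eqref{eq:tangbund}, the restriction to $\widetilde{Y}_F$ of $\iota^*\big(\iota_{\xi_1\wedge\cdots\wedge\xi_{2n+1-r-l}}\widetilde\omega\big)$, where $\widetilde\omega$ is a local generator of $\omega_{\ddt_r}$ restricting to $\omega_0$ along $\ddt_{r,F}$. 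Since $\dim\ddt_r=2n+2+N-r=(2n+1-r-l)+(N+1+l)$, a dimension count shows that this $l$-form is nonzero at $q$ exactly when the vectors $\xi_1,\dots,\xi_{2n+1-r-l}$ together with $d\iota(T_q\widetilde{\mathcal Y})$ span $T_{\iota(q)}\ddt_r$. Here I would invoke the $\mathrm{GL}(n+2)$-invariance of $\mathcal Y$ (Remark \ref{remark:Invarianza}): the subspace $d\iota(T_q\widetilde{\mathcal Y})$ contains the tangent directions to the group orbit of $q$, and these, together with the vertical fibre directions and a suitable choice of the $\xi_i$ inside $\fM_d$, provide the required spanning set at a general $q$, exactly as in the corresponding step of \cite[Section 2]{P} (cf.\ also \cite{V}).

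The main obstacle is precisely this spanning statement: one must verify that the moduli directions produced by the group action, the fibre directions along $\widetilde{Y}_F$, and the distinguished tangent vectors coming from $\fM_d$ are in sufficiently general position that the wedge is nonzero at a general point. This is where the invariance hypothesis on $\mathcal Y$ is indispensable---without it one cannot guarantee enough orbit directions inside $d\iota(T_q\widetilde{\mathcal Y})$---and it is also where \eqref{eq:numero} re-enters, via Lemma \ref{lem:gg}, to propagate the pointwise non-vanishing of $\alpha$ to a nonzero global section. I would carry out the underlying linear-algebra verification in explicit coordinates as in the proofs of Lemma \ref{lem:aiuto} and of \cite[Proposition 2.2]{P}, reducing it to the injectivity already exploited there; the remaining bookkeeping is routine.
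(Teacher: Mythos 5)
Your first two steps are correct, and they coincide with the paper's scaffolding: the reduction of the statement to the non--vanishing of $\alpha$ via the global generation of $\bigwedge^{c}{\fM_{d|\ddt_{r,F}}}\otimes\omega_{\ddt_{r,F}}$ (Lemma \ref{lem:gg}, with $c:=2n+1-r-l$, which is where \eqref{eq:numero} enters), and the pointwise criterion that the contracted form is nonzero at $q$ exactly when the vectors $\xi_1,\dots,\xi_c\in\fM_d$ together with $d\iota(T_q\widetilde{\cY})$ span $T_{\iota(q)}\ddt_r$. The gap is in your third step, the spanning claim, which is where the entire content of the lemma sits.

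The spanning set you propose cannot span. The ${\rm GL}(n+2)$--orbit of $\iota(q)$ has dimension at most $(n+2)^2$, the fibre directions $T_q\widetilde{Y}_F$ contribute $l$, and you add $c$ vectors $\xi_i$; so your set spans at most $(n+2)^2+l+c=(n+2)^2+2n+1-r$ dimensions, while $\dim(\ddt_r)=2n+2+N-r$ with $N+1={d+n+1\choose d}$. Already for $d\geqslant 2n+2$ one has $(n+2)^2<N+1$, so no choice of the $\xi_i$ makes this work: the directions you discard are the essential ones, namely the lifts to $T_{\cY,\xi}$ of \emph{all} first--order deformations of $F$, which exist because $\phi_{|\cY}$ is flat (so $T_{\cY,\xi}\twoheadrightarrow S_d$ at a general smooth point $\xi$) and which make up the bulk of $T_{\cY,\xi}$; the orbit and fibre directions are a tiny part of it. What actually has to be proved is that $\fM_{d,\xi}$ contains a complement of $T_{\cY,\xi}$ in $T_{\ddt_r,\xi}$, equivalently --- after the orbit directions have been used to handle the $\cP$--directions --- that $\fM_{d,\xi}+T^{\rm vert}_{\cY,\xi}=T^{\rm vert}_{\ddt_r,\xi}$. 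The paper's proof is organized precisely around this point: it identifies ${T_{\ddt_r|\ddt_{r,F}}^{\rm vert}}$ with ${\fN_{d,r|\ddt_{r,F}}}$ by comparing \eqref{eq:A} with \eqref{eq:*}; it uses the ${\rm GL}(n+2)$--invariance of $\cY$ to get the exact sequence $0\to T^{\rm vert}_{\cY}\to T_{\cY}\to\psi^*(T_\cP)\to 0$, whence $\codim_{T^{\rm vert}_{\ddt_r,\xi}}(T^{\rm vert}_{\cY,\xi})=\codim_{\ddt_r}(\cY)=c$; and it then reads sections of $\bigwedge^{c}{\fM_{d|\ddt_{r,F}}}\otimes\omega_{\ddt_{r,F}}$ as sections of a line bundle on the relative Grassmannian of $c$--codimensional subspaces of ${T_{\ddt_r|\ddt_{r,F}}^{\rm vert}}$, where global generation produces a section $s$ with $s(x,[\ell])\notin{\rm Ann}(T^{\rm vert}_{\cY,\xi})$. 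None of this is recovered by your appeal to ``the injectivity already exploited'' in Lemma \ref{lem:aiuto} and \cite[Proposition 2.2]{P}: those results give the inclusions $\fM_d\hookrightarrow\fN_{d,r}$, ${\fN_{d,r|\ddt_{r,F}}}\hookrightarrow {T_{\ddt_r|\ddt_{r,F}}}$ and the global generation of $M_d\otimes\cO_{\bG}(1)$, which are inputs to the argument, not the transversality of $\fM_d$ against $T_{\cY}$ that your spanning step requires.
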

\begin{proof} 
The proof uses the same approach as in \cite[Proofs of Lemmas 2.1(i) and 2.3]{P}, which in turn
follows \cite{E,E2,V,V2}. 
For the reader's convenience we recall the argument but we will be brief. 

Consider the exact sequence
\begin{equation}\label{eq:fan}
0 \longrightarrow T_{\ddt_r}^{\rm vert}\longrightarrow T_{\ddt_r} \stackrel{d\psi}\longrightarrow \psi^*(T_{\cP}) \longrightarrow 0,
\end{equation}
which defines $T_{\ddt_r}^{\rm vert}:={\rm Ker}(d\psi)$.
With the usual identification of $\ddt_{r,F}$ with its projection to $\mathcal P$ via $\psi$ (cf. \S \ref{section:Canonical}), \eqref{eq:fan} yields the exact sequence
\begin{equation}\label{eq:A}
0 \to {T_{\ddt_r}^{\rm vert}}_{|{\ddt_{r,F}}}\to {T_{\ddt_r|{\ddt_{r,F}}}} \to {T_{\cP|{\ddt_{r,F}}}} \to 0,
\end{equation}
where the injectivity on the left follows from $\mathcal{T}\mathrm{or}^1\left(\psi^*( T_{\cP}); \cO_{\ddt_{r,F}} \right) = 0$.
By comparing \eqref{eq:A} with the left--most column in \eqref{eq:*}, we deduce that ${\fN_{d,r|{\ddt_{r,F}}}} \cong {T^{\rm vert}_{\ddt_r|{\ddt_{r,F}}}}$. 
This, \eqref{eq:tangbund} and Lemma \ref{lem:aiuto}(iii) yield
$$
\bigwedge^{c} {\fM_{d|{\ddt_{r,F}}}} \otimes \omega_{\ddt_{r,F}} \hookrightarrow \bigwedge^{c} {T_{\ddt_r|\ddt_{r,F}}^{\rm vert}} \otimes \omega_{\ddt_{r,F}} \hookrightarrow \bigwedge^{c} {T_{\ddt_r|{\ddt_{r,F}}}} \otimes \omega_{\ddt_{r,F}} \cong \Omega^{N+1+l}_{{\ddt_r}|{\ddt_{r,F}}} \quad \text{where }  c:= 2n+1-r-l.
$$

By the ${\rm GL}(n+2)$--invariance of $\mathcal Y$ we have also the exact sequence 
\begin{equation*}
0 \longrightarrow T_{\mathcal Y}^{\rm vert}\longrightarrow T_{\mathcal Y} \stackrel{d\psi}\longrightarrow \psi^*(T_{\cP}) \longrightarrow 0
\end{equation*} 
(cf. \cite[Remark 2.3(a)]{E2} and \cite[Lemma 2.1(i)]{P}), hence given a smooth point $\xi:= (x,[\ell],F) \in \mathcal Y$ we have
$$
{\codim}_{T_{\ddt_r,\xi}^{\rm vert}} (T_{\mathcal Y,  \xi}^{\rm vert} ) = {\rm codim}_{\ddt_r}(\mathcal Y) = c.
$$
Now $H^0(\ddt_{r,F}, \bigwedge^{c} {\fM_{d|{\ddt_{r,F}}}} \otimes \omega_{\ddt_{r,F}})$ can be considered as a space of global sections of a line bundle over the relative Grassmannian of $c$--codimensional subspaces of ${T_{\ddt_r|{\ddt_{r,F}}}^{\rm vert}} \otimes \omega_{\ddt_{r,F}}$. Then the global generation of $\bigwedge^{c} {\fM_{d|\ddt_{r,F}}} \otimes \omega_{\ddt_{r,F}}$, which holds by Lemma \ref{lem:gg}, 
implies that there exists a section
$$
s \in H^0\left(\ddt_{r,F}, \bigwedge^{c} {\fM_{d|\ddt_{r,F}}} \otimes \omega_{\ddt_{r,F}}\right) \subset H^0\left(\ddt_{r,F}, \Omega^{N+1+l}_{{\ddt_r}|{\ddt_{r,F}}}\right)$$ such that
$$
s(x,[\ell]) \notin {\rm Ann}(T_{\mathcal Y, \xi}^{\rm vert}).
$$
Since $\iota \colon\widetilde{\mathcal Y} \to \ddt_r$ is generically an immersion, we obtain  a non--zero element in $H^0(\widetilde{Y}_F, \omega_{\widetilde{Y}_F}) $.
\end{proof}

If $l\geqslant n$, then $\frac{r(r+1)}{2} \geqslant 2n+2$ implies \eqref{eq:numero}, and we have:

\begin{corollary}\label{corollary:RationalCurvesVeryGen} 
Let $r$, $l\geqslant n$ and $d$ be positive integers such that $d \geqslant {\rm max} \{2,r\}$. Suppose there is a $\mathcal Y$ as in Lemma \ref {lem:Pac2.3},  such that for $F \in U$ general one has
$p_g(\widetilde{Y}_F)=0$ (e.g. when $\widetilde{Y}_F$ is uniruled), then

\begin{equation}\label{eq:r}
\frac{r(r+1)}{2} \leqslant 2n+1.
\end{equation}
Moreover, if $F \in U$ is very general and if $\ddt_{r,F}$ has a $l$--dimensional irreducible subvariety $Y_F$ fitting in a ${\rm GL}(n+2,\mathbb C)$--invariant family, whose desingularization $\widetilde{Y}_F$ has $p_g(\widetilde{Y}_F)=0$, then \eqref {eq:r} holds.
\end{corollary}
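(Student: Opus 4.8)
The plan is to derive both assertions from Lemma \ref{lem:Pac2.3} by contraposition, supplemented in the second case by a countability argument that manufactures a family over $U$ out of a single very general $F$. For the first assertion I would argue by contradiction: suppose \eqref{eq:r} fails, i.e. $\frac{r(r+1)}{2}>2n+1$, which (both quantities being integers, since $r(r+1)$ is even) means $\frac{r(r+1)}{2}\geqslant 2n+2$. Because $l\geqslant n$, the elementary observation recorded just before the statement shows that condition \eqref{eq:numero} holds. Since moreover $d\geqslant\max\{2,r\}$ by hypothesis, Lemma \ref{lem:Pac2.3} applies to the given $\mathcal{Y}$ and forces $p_g(\widetilde{Y}_F)>0$ for general $F\in U$, contradicting the hypothesis $p_g(\widetilde{Y}_F)=0$. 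Hence \eqref{eq:r} must hold.

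For the second assertion the aim is to upgrade the data at a single very general $F$ to a family over $U$ to which the first part applies. I would introduce an appropriate relative parameter scheme $\mathcal{H}\longrightarrow U$ (a relative Hilbert scheme or Chow variety) whose points over $F\in U$ parametrize $l$--dimensional subvarieties of $\ddt_{r,F}$; the $\mathrm{GL}(n+2,\mathbb{C})$--action on $\ddt_r$ from Remark \ref{remark:Invarianza} covers the action on $U$ and so induces an action on $\mathcal{H}$. Since $\mathrm{GL}(n+2,\mathbb{C})$ is connected, it preserves every irreducible component of $\mathcal{H}$; thus each component is automatically $\mathrm{GL}$--invariant, and the hypothesis that $Y_F$ fits in a $\mathrm{GL}$--invariant family just locates the point $[Y_F]$ on a component $\mathcal{H}_0$. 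The components of $\mathcal{H}$ that do not dominate $U$ have images forming a countable union of proper closed subsets of $U$; as $F$ is very general it avoids this union, so the component $\mathcal{H}_0$ through $[Y_F]$ must dominate $U$. Restricting to $\mathcal{H}_0$, shrinking $U$, and passing to an étale cover as permitted in \S\ref{section:PositiveGenus}, I obtain a $\mathrm{GL}$--invariant integral family $\mathcal{Y}\subseteq\ddt_r$, flat of relative dimension $l$ over $U$, whose fiber over the chosen very general $F$ is $Y_F$.

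It then remains to propagate the vanishing $p_g(\widetilde{Y}_F)=0$ from that single very general $F$ to the general fiber. After desingularizing the total space, $\widetilde{\mathcal{Y}}\longrightarrow U$ is smooth and projective over a dense open connected subset of $U$ in which the very general $F$ certainly lies; since the geometric genus $p_g=h^{n,0}$ is constant in a smooth projective family over a connected base (equivalently, by upper semicontinuity of $h^0(\omega)$ the locus $\{p_g=0\}$ is open and nonempty, hence dense), I conclude $p_g(\widetilde{Y}_{F'})=0$ for general $F'\in U$. Applying the first part of the corollary to $\mathcal{Y}$ now yields \eqref{eq:r}.

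The step I expect to be the main obstacle is the bookkeeping in this specialization argument: choosing the parameter scheme $\mathcal{H}\longrightarrow U$ precisely enough that ``$Y_F$ fits in a $\mathrm{GL}$--invariant family'' really amounts to ``$[Y_F]$ lies on a component of $\mathcal{H}$'', confirming that very generality of $F$ forces that component to dominate $U$, and—most delicately—checking that the family extracted from $\mathcal{H}_0$ genuinely satisfies all the running assumptions of \S\ref{section:PositiveGenus} (integrality, flatness of relative dimension $l$, and the étale--cover normalization) so that Lemma \ref{lem:Pac2.3} is literally applicable. Once the family is in place, the constancy/semicontinuity of $p_g$ and the final contraposition are routine.
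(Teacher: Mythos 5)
Your proposal is correct and takes essentially the same route as the paper: the first assertion is the contrapositive of Lemma \ref{lem:Pac2.3} (using that $l\geqslant n$ together with $\frac{r(r+1)}{2}\geqslant 2n+2$ implies \eqref{eq:numero}), and the second rests on the countability of the components of the relative parameter scheme, the fact that a very general $F$ avoids any countable union of proper closed subsets, and semicontinuity of $p_g$ --- precisely the ingredients the paper compresses into its two-line argument. The only difference is organizational: you argue in the constructive direction (build a family dominating $U$ from the very general $F$, propagate $p_g=0$ to the general fiber, apply the first part), whereas the paper argues by contradiction (if \eqref{eq:r} fails, Lemma \ref{lem:Pac2.3} makes the locus of such $F$ a countable union of proper closed subsets of $U$).
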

\begin{proof} The first part follows by Lemma \ref{lem:Pac2.3}. 
As for the final assertion, suppose  \eqref {eq:r} does not hold. Then Lemma \ref{lem:Pac2.3} yields that the set of polynomials $F\in U$ as in the statement is the union of countably many closed subsets of $U$. 
Hence the assertion holds.
\end{proof}


\section{Covering gonality of very general hypersurfaces}\label{section:Proof}

In this section we conclude the proof of Theorem \ref{theorem:Main}.
To start, we prove the following.
\begin{theorem}\label{theorem:LowerBound}
Let $X\subset \mathbb{P}^{n+1}$ be a very general hypersurface of degree $d\geqslant 2n+2$.
Then 
\begin{equation*}
\covgon(X)\geqslant d-\left\lfloor\frac{\sqrt{16n+9}-1}{2}\right\rfloor.
\end{equation*}

\begin{proof}
For $n=1$ and $n=2$, the assertion holds as the covering gonality of $X$ is $d-1$ and $d-2$, respectively (cf. \cite[Teorema 3.14]{C2} and \cite[Corollary 1.8]{LP}).
So we assume hereafter that $n\geqslant 3$.

Let $F\in S^*_d$ be very general, set 
$c:=\covgon(X_F)$
and consider a covering family $\mathcal{C}\stackrel{\pi}{\longrightarrow}T$ of $c$--gonal curves as in \S \ref {section:CoveringFamiliesAndCones} from which we keep the notation. We will assume that the curves of $\mathcal C$ have minimal degree among all $c$--gonal curves covering $X_F$.

Since $n\geqslant 3$, Theorem \ref{theorem:UpperBound} yields $c\leqslant d-3$.
Thus Proposition \ref{proposition:Cone} applies;
for general $(t,y)\in T\times \mathbb{P}^{1}$, there exists a line $\ell_{(t,y)}\subset V_{x_t}^{d-c}$ such that 
$$
\ell_{(t,y)}\cdot X = (d-c)x_t+f(q_1)+\dots+f(q_c)
$$
where $q_1+\cdots +q_c$ is a divisor of a $g^ 1_c$ on $C_t$.  Then we  define the map
\begin{equation*}
\begin{array}{crcl}
\Psi_{\mathcal{C}}\colon & T \times \mathbb{P}^1 & \dashrightarrow & \ddt_{d-c,F} \\
& (t,y)    & \longmapsto &  \displaystyle \left(x_t, \left[\ell_{t,y}\right], F\right)  
\end{array}.
\end{equation*} 
The variety $Y_{F,\mathcal C}:= \overline{\Psi_{\mathcal{C}}\left( T \times \mathbb{P}^1\right)}\subset \ddt_{d-c,F}$ is covered by the rational curves $\overline{\Psi_{\mathcal{C}}\left( \left\{t\right\} \times \mathbb{P}^1\right)}$, with $t\in T$. 
As $(t,y)\in  T \times \mathbb{P}^1$ vary, the lines $\ell_{t,y}$ describe a $n$--dimensional subvariety of the Grassmannian $\mathbb{G}(1,n+1)$ (cf. \S \ref{section:CoveringFamiliesAndCones}), so that $\dim (Y_{F,\mathcal C})=n$.

Define 
$$
Y_F:= \bigcup_{\mathcal C} Y_{F,\mathcal C}\subset \ddt_{d-c,F}
$$ 
with $\mathcal C$ varying among the (finitely many maximal) families of $c$--gonal curves of minimal degree covering $X_F$. We may pretend $Y_F$ to be irreducible, otherwise we replace it with one of its irreducible components. One has 
$\dim(Y_F)\geqslant n$ and $Y_F$ is covered by rational curves. 
Clearly the last assertion of Corollary \ref {corollary:RationalCurvesVeryGen}
can  be applied to $Y_F$, with $r=d-c$, so
$$
\frac{(d-c)(d-c+1)}{2}\leqslant 2n+1\quad \text{hence}\quad d-c\leqslant \frac {\sqrt{16n+9}-1}2.
$$
Being $c$ an integer, we find the lower bound in the statement. 
\end{proof}
\end{theorem}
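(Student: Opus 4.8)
The plan is to realize a minimal covering family computing $c=\covgon(X)$ as a uniruled subvariety of the incidence locus $\ddt_{d-c,F}$, via the cone description of Proposition \ref{proposition:Cone}, and then to extract a numerical inequality from the positivity bound of Corollary \ref{corollary:RationalCurvesVeryGen}. First I would dispose of $n=1,2$, where $\covgon(X)$ equals $d-1$ and $d-2$ by Noether's theorem and by \cite{LP}; since $\lfloor\tfrac{\sqrt{16n+9}-1}{2}\rfloor=2$ in both cases, the claimed inequality is immediate, so I may assume $n\geqslant 3$. Fix a very general $F\in S^*_d$, set $c:=\covgon(X_F)$, and choose a covering family $\mathcal{C}\stackrel{\pi}{\longrightarrow}T$ of $c$-gonal curves of \emph{minimal degree}. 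Because $n\geqslant 3$, Theorem \ref{theorem:UpperBound} forces $c\leqslant d-3$, so Proposition \ref{proposition:Cone} applies and produces, for general $(t,y)$, a cone vertex $x_t\in f(C_t)$ together with a ruling line $\ell_{(t,y)}$ of $V^{d-c}_{x_t}$ satisfying $\ell_{(t,y)}\cdot X=(d-c)x_t+f(q_1)+\dots+f(q_c)$.

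The core of the construction is to record these data as points of $\ddt_{d-c,F}$. By the definition \eqref{eq:dd}, the tangency relation $\ell_{(t,y)}\cdot X\geqslant(d-c)x_t$ says precisely that $(x_t,[\ell_{(t,y)}])\in\ddt_{d-c,F}$, whence a rational map $\Psi_{\mathcal{C}}\colon T\times\mathbb{P}^1\dashrightarrow\ddt_{d-c,F}$, $(t,y)\mapsto(x_t,[\ell_{(t,y)}],F)$. Its image $Y_{F,\mathcal{C}}$ is swept out by the rational curves $\overline{\Psi_{\mathcal{C}}(\{t\}\times\mathbb{P}^1)}$, hence uniruled; and since the lines $\ell_{(t,y)}$ already fill an $n$-dimensional locus in $\mathbb{G}(1,n+1)$ while $\dim(T\times\mathbb{P}^1)=n$, one gets $\dim(Y_{F,\mathcal{C}})=n$. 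Assembling these over the finitely many maximal minimal-degree families and passing to an irreducible component, I obtain a uniruled $Y_F\subset\ddt_{d-c,F}$ of dimension $l:=\dim(Y_F)\geqslant n$, so the desingularization of $Y_F$ has $p_g=0$.

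To finish I would invoke the very general assertion of Corollary \ref{corollary:RationalCurvesVeryGen} with $r=d-c$ and $l\geqslant n$, concluding $\tfrac{(d-c)(d-c+1)}{2}\leqslant 2n+1$. Solving the quadratic $(d-c)(d-c+1)\leqslant 4n+2$ and using that $d-c\in\mathbb{N}$ yields $d-c\leqslant\lfloor\tfrac{\sqrt{16n+9}-1}{2}\rfloor$, which is the asserted bound on $\covgon(X)=c$.

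I expect the main obstacle to be the verification of the standing hypotheses of Corollary \ref{corollary:RationalCurvesVeryGen}: that the $Y_F$ fit into a family $\mathcal{Y}\subset\ddt_{d-c}$ which is flat over (an \'etale cover of) $U$ and, above all, invariant under the $\mathrm{GL}(n+2)$-action of Remark \ref{remark:Invarianza}. The invariance is forced by the \emph{intrinsic} nature of the recipe: both $\covgon$ and the minimal-degree condition are projective invariants, so $F\mapsto Y_F$ is equivariant and the total space is $\mathrm{GL}(n+2)$-stable. The only genuinely delicate point is to pass from a possibly reducible $Y_F$ to an \emph{irreducible} fiber without destroying invariance; here I would use that $\mathrm{GL}(n+2)$ is connected, so it cannot permute the irreducible components of the invariant total family but must fix each of them, allowing me to replace $Y_F$ by a single component of dimension $\geqslant n$ and thereby meet the irreducibility requirement of the corollary.
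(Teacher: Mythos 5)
Your proposal is correct and follows essentially the same route as the paper's own proof: reduce to $n\geqslant 3$, use Theorem \ref{theorem:UpperBound} and Proposition \ref{proposition:Cone} to map the minimal-degree covering family into $\ddt_{d-c,F}$ via $\Psi_{\mathcal{C}}$, and then apply the very general case of Corollary \ref{corollary:RationalCurvesVeryGen} with $r=d-c$ to obtain $\frac{(d-c)(d-c+1)}{2}\leqslant 2n+1$ and hence the bound. Your added discussion of the $\mathrm{GL}(n+2)$-equivariance of $F\mapsto Y_F$ and the connectedness argument for passing to an irreducible component only makes explicit what the paper asserts implicitly, and is sound.
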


The conclusion of the proof of Theorem \ref{theorem:Main} is given by the following elementary computational Lemma, whose proof can be left to the reader. 
\begin{lemma}\label{lemma:Equality}
Let $n\in \mathbb{N}$.
Then 
$$
\left\lfloor\frac{\sqrt{16n+9}-1}{2}\right\rfloor
=
\left\{
\begin{array}{ll}
\displaystyle \left\lfloor\frac{\sqrt{16n+1}-1}{2}\right\rfloor +1 & \text{if }n\in\left\{\left.4\alpha^2+3\alpha, 4\alpha^2+5\alpha+1\right|\alpha\in \mathbb{N}\right\} \\
& \\
\displaystyle  \left\lfloor\frac{\sqrt{16n+1}-1}{2}\right\rfloor & \text{otherwise.}
\end{array}
\right.
$$
\end{lemma}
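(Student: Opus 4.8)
The plan is to convert the two nested radical--floor expressions into elementary quadratic inequalities in integers, after which the entire statement reduces to a short exercise about products of consecutive integers.

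First I would record the basic equivalence: for a nonnegative integer $m$ and a real $A>0$, one has $m\leqslant \frac{\sqrt{A}-1}{2}$ if and only if $(2m+1)^2\leqslant A$, i.e. $m(m+1)\leqslant \frac{A-1}{4}$. Applying this with $A=16n+1$ (so that $\frac{A-1}{4}=4n$) and with $A=16n+9$ (so that $\frac{A-1}{4}=4n+2$), and writing $h:=\left\lfloor\frac{\sqrt{16n+1}-1}{2}\right\rfloor$ and $h':=\left\lfloor\frac{\sqrt{16n+9}-1}{2}\right\rfloor$, I obtain that $h$ is the largest integer with $h(h+1)\leqslant 4n$ (exactly as already used in the proof of Theorem \ref{theorem:UpperBound}) and that $h'$ is the largest integer with $h'(h'+1)\leqslant 4n+2$.

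Next I would localize $h'$. Since $4n\leqslant 4n+2$, we trivially get $h\leqslant h'$. By maximality of $h$ we have $(h+1)(h+2)>4n$, hence $(h+1)(h+2)\geqslant 4n+1$, and therefore $(h+2)(h+3)=(h+1)(h+2)+2(h+2)\geqslant 4n+2h+5>4n+2$; thus $h+2$ violates the defining inequality for $h'$, so $h'\leqslant h+1$. This already yields $h'-h\in\{0,1\}$, the bound underlying Remark \ref{rem:almost}.

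The crux is then to decide when $h'=h+1$. This happens exactly when $(h+1)(h+2)\leqslant 4n+2$, which together with $(h+1)(h+2)\geqslant 4n+1$ confines $(h+1)(h+2)$ to $\{4n+1,4n+2\}$; since $(h+1)(h+2)$ is a product of consecutive integers it is even, whereas $4n+1$ is odd, so necessarily $(h+1)(h+2)=4n+2$. Conversely, any positive integer solution $k$ of $k(k+1)=4n+2$ must coincide with $h+1$, because $(k-1)k\leqslant 4n<k(k+1)$ pins $h=k-1$. Hence $h'=h+1$ if and only if $2n+1=\frac{k(k+1)}{2}$ is a triangular number for some integer $k\geqslant 1$. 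Finally I would observe that $\frac{k(k+1)}{2}$ is odd precisely for $k\equiv 1,2\pmod 4$ (the only residues for which the odd number $2n+1$ can be attained), and substituting $k=4\alpha+1$ and $k=4\alpha+2$ into $2n+1=\frac{k(k+1)}{2}$ gives exactly $n=4\alpha^2+3\alpha$ and $n=4\alpha^2+5\alpha+1$, which is the asserted exceptional set. I expect no serious obstacle: the only step beyond bookkeeping is the parity observation ruling out $(h+1)(h+2)=4n+1$, and it is precisely that remark which collapses the a priori range $h'\in\{h,h+1\}$ into the clean characterization via odd triangular numbers.
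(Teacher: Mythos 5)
Your proof is correct, and all steps check out: the equivalence $m\leqslant\frac{\sqrt{A}-1}{2}\Leftrightarrow m(m+1)\leqslant\frac{A-1}{4}$, the localization $h'\in\{h,h+1\}$, the parity argument ruling out $(h+1)(h+2)=4n+1$, and the substitutions $k=4\alpha+1$, $k=4\alpha+2$ yielding precisely $n=4\alpha^2+3\alpha$ and $n=4\alpha^2+5\alpha+1$. Note that the paper gives no proof at all (it declares the lemma ``elementary computational'' and leaves it to the reader), so there is nothing to compare against; your argument is exactly the intended one, consistent with the paper's own characterization of $h=\left\lfloor\frac{\sqrt{16n+1}-1}{2}\right\rfloor$ as the maximal integer with $h(h+1)\leqslant 4n$ in the proof of Theorem \ref{theorem:UpperBound}, and it correctly covers the boundary cases $n=0,1$ corresponding to $\alpha=0$.
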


\section{Final remarks, open problems and speculations}\label{sec:spec}

\subsection{Are all curves computing the covering gonality planar?}\label{ssec:plane}
As we mentioned in the Introduction, an interesting problem is to characterize the curves computing the covering gonality of a very general hypersurface $X\subset \mathbb{P}^{n+1}$ of degree $d\geqslant 2n+1$, in particular, one may ask the following:

\begin{question}\label{qu0} 
If $X\subset \mathbb{P}^{n+1}$ is a very general hypersurface of degree $d\geqslant 2n+1$, are the curves computing the covering gonality plane curves?
\end{question} 

The proof of Theorem \ref{theorem:Main} shows that this question is related to the existence of rational curves on certain complete intersections. Specifically, if $c=\covgon(X)$, $h=d-c$ and $x\in X^h_1$ (see \S  \ref{section:UpperBound}) is a very general point, one is led to ask the following:

\begin {question}\label{qu1} 
Does $\Lambda_x^h\subset \mathbb{P}^{n-1}$ contain rational curves other than lines?
\end{question}

Recall that when $Y\subset \mathbb{P}^{m}$ is a very general hypersurface of degree $2m-3$, then all  rational curves  on $Y$ are lines, whereas there are no rational curves on very general hypersurfaces of degree $2m-2$ (cf. \cite{P, V}).
The integer $h=\left\lfloor\frac{\sqrt{16n+1}-1}{2}\right\rfloor$ is the largest such that the locus of complete intersections $Y\subset \mathbb{P}^{n-1}$ of type $(2,\dots,h-1)$ containing a line has codimension at most 1 in the parameter space. 
Thus it is natural to investigate whether the general $Y$ containing a line may contain  other rational curves, i.e. Question \ref {qu1}  arises very naturally in this context. 

A negative answer to Question \ref {qu1} is a necessary condition for  an affirmative answer to Question \ref {qu0}. However this condition is not sufficient, as the cases
 $n\leqslant 7$ show. 
 
 If $n=3$, then $h=3$ and $X^ 3_1$ is the locus of points $x\in X$ such that the conic $\Lambda_x^3$ is reducible, so that the answer to Question \ref{qu1} is negative. However, if $x\in X$ is general, then 
 $\Lambda_x^h$ is an irreducible conic and the intersection of the irreducible cone $V_x^h$ with $X$ is not a plane curve but it still has gonality $d-3$. Similar considerations for $n=4$, for which again $h=3$. 
 
If $n=5$, then $h=4$, and $X^ 4_1$  is the locus of points $x\in X$ such that $\Lambda_x^3$, a $K3$ surface of degree 6 in $\mathbb P^ 4$, contains a line. 
The general such a surface contains no other rational curve, so again the answer to Question \ref{qu1} is negative. However, if $x\in X$ is general, then 
 $\Lambda_x^h$ is a general $K3$ surface of degree 6 in $\mathbb P^ 4$, which contains infinitely many singular rational curves, so that again there are (infinitely many) covering families of $X$ consisting of $(d-4)$--gonal curves other than plane curves. Similar considerations for $n=6,7$, for which still $h=4$.  
 
Hence, to hope for an affirmative answer to Question \ref {qu0}  it is necessary to assume $n$ sufficiently large.  

\subsection{The connecting gonality}

Another birational invariant introduced in \cite{BDELU} is the \emph{connecting gonality},
 defined as
\begin{displaymath}
\conngon(Y):=\min\left\{c\in \mathbb{N}\left|
\begin{array}{l}
\text{Given two general points }y_1,y_2\in Y,\,\exists\text{ an irreducible}\\ \text{curve } C\subset Y  \text{ such that }y_1,y_2\in C \text{ and }\gon(C)=c
\end{array}\right.\right\}.
\end{displaymath}
Since having $\conngon(Y)=1$ is equivalent to being rationally connected, the connecting gonality can be thought as a measure of the failure of $Y$ to satisfy such a property.

When $X\subset \mathbb{P}^{n+1}$ is  a very general hypersurface of degree $d\geqslant 2n+2$, our approach may determine restrictions also to $\conngon(X)$. Indeed, one could argue as in \S \ref{section:Predonzan+} and \S \ref{section:UpperBound}, and look for complete intersections in $\mathbb{P}^{n-1}$ of type $(2,3,\dots,h-1)$ containing large dimensional families of lines (see e.g. \cite[Corollary 2.2]{Bo}).
Since the dimension of a family of curves computing $\conngon(X)$ must be at least $2n-2$ (see e.g. \cite[Section 2.1]{B2}), a naive computation suggests the following upper bound
\begin{equation}\label{eq:ConnGon}
\conngon(X)\leqslant d-\left\lfloor\frac{\sqrt{8n+9}-1}{2}\right\rfloor.
\end{equation}
On the other hand, the curves computing the connecting gonality of $X$ are still governed by Proposition \ref{proposition:Cone}.
Thus a lower bound on $\conngon(X)$ could be obtained by improving the argument of Section 3.

In   analogy with the covering gonality, one may naively conjecture that inequality \eqref{eq:ConnGon} is actually an equality. 
Then  \eqref{eq:CovGon} would imply that the difference between the covering and connecting gonality diverges as $n$ grows.
This would answer to a question raised in \cite[Section 4]{BDELU}.

\subsection{The irrationality degree}

Given an irreducible projective variety $X$ of dimension $n$, for any positive integer $k\leqslant n$ one may define the \emph{$k$--irrationality degree} of $X$ to be the birational invariant
\begin{displaymath}
{\rm irr}_k(X):=\min\left\{c\in \mathbb{N}\left|
\begin{array}{l}
\text{Given a general point }x\in X,\,\exists\text{ an irreducible}\\\text{subvariety } Z\subseteq X  \text{ of dimension $k$ such that } x\in Z\text{ and}\\\text{there is a rational dominant map } Z\dasharrow \mathbb P^ k  \text{ of degree } c
\end{array}\right.\right\}.
\end{displaymath}
If $k=n$, this is the \emph{irrationality degree} ${\rm irr}(X)$ of $X$ (see \cite {B} for references), whereas ${\rm irr}_1(X)=\covgon(X)$. Of course one has
\[
{\rm irr}(X)\geqslant {\rm irr}_{n-1}(X)\geqslant \dots \geqslant {\rm irr}_2(X)\geqslant \covgon(X).
\]
It would be interesting to study this string of inequalities for $X\subset \mathbb P^ {n+1}$ a very general hypersurface as above. 
It is likely that our methods  could be useful for that. We hope to come back to this  in the future. 

Interestingly enough, a relevant amount of the inequalities above must consist of equalities.
Indeed, \cite[Theorem A]{BDELU} ensures that  ${\rm irr}(X)=d-1$, so that Theorem \ref{theorem:Main} yields that ${\rm irr}(X)-\covgon(X)\leqslant \left\lfloor\frac{\sqrt{16n+1}-1}{2}\right\rfloor$, which asymptotically equals $2\sqrt{n}$.

\subsection{Gaps} Finally, given an irreducible projective variety $X$ of dimension $n$, for any positive integer $k\leqslant n$ one can consider the following numerical set
\begin{displaymath}
N_k(X):=\left\{c\in \mathbb{N}\left|
\begin{array}{l}
\text{Given a general point }x\in X,\,\exists\text{ an irreducible}\\\text{subvariety } Z\subseteq X  \text{ of dimension $k$ such that } x\in Z \text{ and}\\\text{there is a rational dominant map } Z\dasharrow \mathbb P^ k  \text{ of degree } c
\end{array}\right.\right\}.
\end{displaymath}
Of course
\[
N_n(X)\subseteq N_{n-1}(X)\subseteq \dots N_2(X)\subseteq N_1(X).
\]
A \emph{gap} for $N_k(X)$ is an integer $c\in \mathbb N$ such that $c\not\in N_k(X)$. 
It is not difficult to see that the set of gaps of $N_n(X)$ is bounded, so is the set of gaps of $N_k(X)$ for all $k\leqslant n$ (see e.g. \cite[Problem 4.6]{BDELU}). 
It would be interesting to study the sets of gaps of $N_k(X)$, for $X$ a very general hypersurface in $\mathbb P^ {n+1}$ as above.
In this direction, we note that \cite[Theorem 1.3]{LP} leads to relevant results in the case of surfaces in $\mathbb{P}^{3}$.


\section*{Acknowledgements}

We started this research during the Workshop \emph{Birational geometry of surfaces}, held at University of Rome ``Tor Vergata'' on January $11^{\text{th}}--15^{\text{th}}$, 2016. So we would like to thank the Organizers for their invitation and Concettina Galati, Margarida Melo, Duccio Sacchi for stimulating conversations.
We are also grateful to Thomas Dedieu, Pietro De Poi, Lawrence Ein, Robert Lazarsfeld, and Edoardo Sernesi for helpful discussions.
Finally, we would like to thank the anonymous referee for pointing out various inaccuracies in the first version of this work.


\begin{thebibliography}{00}
\bibitem{B} F. Bastianelli, On symmetric products of curves, \emph{Trans. Amer. Math. Soc.} \textbf{364} (2012), 2493--2519.
\bibitem{B2} F. Bastianelli, On irrationality of surfaces in {$\mathbb{P}^3$}, \emph{J. Algebra} \textbf{488} (2017), 349--361.
\bibitem{BCD} F. Bastianelli, R. Cortini, P. De Poi, The gonality theorem of Noether for hypersurfaces, \emph{J. Algebraic Geom.} \textbf{23} (2014), 313--339.
\bibitem{BDELU} F. Bastianelli, P. De Poi, L. Ein, R. Lazarsfeld, B. Ullery, Measures of irrationality for hypersurfaces of large degree, \emph{Compos. Math.} \textbf{153} (2017), 2368--2393.
\bibitem{Bo} C. Borcea, Deforming varieties of $k$--planes of projective complete intersections, \emph{Pacific J. Math.} \textbf{143} (1990), 25--36.
\bibitem{C} C. Ciliberto, Osservazioni su alcuni classici teoremi di unirazionalit\`a per ipersuperficie e complete intersezioni algebriche proiettive, \emph{Ricerche di Mat.} \textbf{29} (1980), 175--191.
\bibitem{C2} C. Ciliberto, Alcune applicazioni di un classico procedimento di Castelnuovo, \emph{Seminari di geometria, 1982--1983 (Bologna, 1982/1983)}, Univ. Stud. Bologna, Bologna, 1984, 17--43.
\bibitem{Cle} H. Clemens, Curves on generic hypersurfaces, \emph{Ann. Sci. École Norm. Sup.} \textbf{ 19} (1986), 629--636.
\bibitem{DeMa} O. Debarre, L. Manivel, Sur la variété des espaces linéaires contenus dans une intersection complète, \emph{Math. Ann.} \textbf{312} (1998), 549--574.
\bibitem{D1} P. De Poi, On first order congruences of lines of {$\mathbb{P}^4$} with a fundamental curve, \emph{Manuscripta Math.} \textbf{106} (2001), 101--116.
\bibitem{E} L. Ein, Subvarieties of generic complete intersections, \emph{Invent. Math.} \textbf{94} (1988), 163--169.
\bibitem{E2} L. Ein, Subvarieties of generic complete intersections II, \emph{Math. Ann.} \textbf{289} (1991), 465--471.
\bibitem{LP} A.F. Lopez, P. Pirola, On the curves through a general point of a smooth surface in $\mathbb{P}^3$, \emph{Math. Z.} \textbf{19} (1995), 93--106. 
\bibitem{Mi} C. Miyazaki, Remarks on $r$--planes in complete intersections, \emph{Tokyo J. Math.} \textbf{39} (2016), 459--467. 
\bibitem{Morin} U. Morin, Sull'insieme degli spazi lineari contenuti in una ipersuperficie algebrica, \emph{Atti Accad. Naz. Lincei, Rend., Cl. Sci. Fis. Mat. Nat.} \textbf{24} (1936), 188--190.
\bibitem{P} G. Pacienza, Rational curves on  general projective hypersurfaces, \emph{J. Algebraic Geom.} \textbf{12} (2003), 245--267.
\bibitem{P2} G. Pacienza, Subvarieties of general type on a general projective hypersurface, \emph{Trans. Amer. Math. Soc.} \textbf{356} (2004), 2649--2661.
\bibitem{Pre} A. Predonzan, Intorno agli $S_k$ giacenti sulla variet\`{a} intersezione completa di pi\`{u} forme, \emph{Atti Accad. Naz. Lincei. Rend. Cl. Sci. Fis. Mat. Nat.}  \textbf{5} (1948), 238--242.
\bibitem{Segre}  B. Segre, Intorno agli $S_k$ che appartengono alle forme generali di dato ordine,  \emph{Atti Accad. Naz. Lincei, Rend., Cl. Sci. Fis. Mat. Nat.} \textbf{4} (1948), 261--265.
\bibitem{Ser} E. Sernesi, Deformations of Algebraic Schemes,  Grundlehren der mathematischen Wissenschaften, \textbf{334}, Springer-Verlag, Berlin, 2006.
\bibitem{V} C. Voisin, On a conjecture of Clemens on rational curves on hypersurfaces, \emph{J. Differential Geom.} \textbf{44} (1996), 200--213.
\bibitem{V2} C. Voisin, A correction on ``On a conjecture of Clemens on rational curves on hypersurfaces'', \emph{J. Differential Geom.} \textbf{49} (1998), 601--611.
\end{thebibliography}
\end{document}